\documentclass[mnsc,nonblindrev]{informs3} 

\OneAndAHalfSpacedXI
\usepackage{natbib}
\bibpunct[, ]{(}{)}{,}{a}{}{,}%
\usepackage{booktabs} 

\usepackage[ruled]{algorithm2e} 

\usepackage{ bbm, color, enumerate, amsbsy, amsfonts, amsopn, amssymb,  amsxtra, bezier, graphicx, latexsym, verbatim,
	pictexwd, supertabular, url, dsfont,leftidx, setspace}
\usepackage{mathtools,bm}
\usepackage{multirow}
\usepackage{footnote}
\makesavenoteenv{tabular}
\makesavenoteenv{table}

\newcommand{\mb}[1]{\ensuremath{\boldsymbol{#1}}}

\def\opt{\textsc{OPT}}
\def\alg{\textsc{ALG}}

\DeclarePairedDelimiter{\ceil}{\lceil}{\rceil}
\DeclarePairedDelimiter{\floor}{\lfloor}{\rfloor}

\SetAlFnt{\small}
\SetAlCapFnt{\small}
\SetAlCapNameFnt{\small}
\SetAlCapHSkip{0pt}
\IncMargin{-\parindent}

\TheoremsNumberedThrough     
\ECRepeatTheorems

\EquationsNumberedThrough

\begin{document}
	
	\TITLE{Submodular Order Functions and Assortment Optimization} 
\ARTICLEAUTHORS{	\AUTHOR{	Rajan Udwani}
\AFF{UC Berkeley, IEOR, 
\EMAIL{rudwani@berkeley.edu}}}

\ABSTRACT{%
We define a new class of set functions that in addition to being monotone and subadditive, also admit a very limited form of submodularity defined over a permutation of the ground set. We refer to this permutation as a submodular order. This class of functions includes monotone submodular functions as a sub-family. We give fast algorithms with strong approximation guarantees for maximizing submodular order functions under a variety of constraints and show a nearly tight upper bound on the highest approximation guarantee achievable by algorithms with polynomial query complexity.  Applying this new notion to the problem of constrained assortment optimization in  fundamental choice models, we obtain new algorithms that are both faster and have stronger approximation guarantees (in some cases, first algorithm with constant factor guarantee). We also show an intriguing connection to the maximization of monotone submodular functions in the streaming model, where we recover best known approximation guarantees as a corollary of our results.
}%

\KEYWORDS{Submodular order, Assortment Optimization, Approximation Algorithms}
\maketitle

\section{Introduction}
{\color{black} Given a ground set $N$, consider the problem of finding a (feasible) subset $S$ that maximizes a set function $f:2^N\to \mathbb{R}$ that satisfies the following properties,
\begin{eqnarray*}
&\text{\emph{Monotonicity:} }	&f(A)\,\geq\, f(B)\quad \forall B\subseteq A,\\
&\text{\emph{Submodularity:} } &f(A\cup \{e\})-f(A)\leq f(B\cup\{e\})-f(B)\quad \forall B\subseteq A\subseteq N,\, e\in N\backslash A.
\end{eqnarray*}
Since the work of \cite{nemhauser1978analysis}, this problem has received significant and continued interest in Operations Research, Computer Science, and many other communities due to a variety of modern applications, 
such as, feature selection \citep{das2011submodular, wei2015submodularity}, sensor placement \citep{krause2008near,leskovec2007cost}, influence maximization \citep{krause2008robust}, data summarization \citep{cardstream}, and many others. 
Despite this richness of submodular functions, there are important subset selection problems where the objective ($f$) is \emph{not} submodular. 

Of particular interest to us is the setting of \emph{assortment optimization} where the objective is not submodular. The problem arises widely in many industries including retailing and online
advertising, where given a universe of substitutable products, the seller wishes to select a subset of products (called an assortment) with maximum expected revenue. The effect of substitution between products is captured by a \emph{choice model} 
$\phi: N\times 2^N\to [0,1]$. 
Given assortment $S$, the probability that customer chooses product $i\in S$ is given by $\phi(i,S)$. Customer may choose an \emph{outside option} that is not in $S$ with probability $1-\sum_{i\in S}\phi(i,S)$. Given (fixed) prices $(r_i)_{i\in N}$, the expected revenue is,
\[R_{\phi}(S)=\sum_{i\in S}r_i \phi(i,S).\]
Given the choice model $\phi$, we are interested in the problem of finding the revenue optimizing assortment subject to certain types of constraints that we will discuss shortly. 
For a general choice model,  \citet{aouad2018approximability} show that even the unconstrained assortment optimization problem is hard to approximate. The optimization problem is more tractable for commonly used simple choice models such as the Multinomial Logit (MNL) choice model (defined in Section \ref{sec:backasst}) but the revenue objective is not submodular even in this special case. The objective is also non-monotone but one can transform it into a monotone objective (see Section \ref{sec:asstresult}). 
Given this intractability and lack of general structure, the field of assortment optimization has evolved through algorithms that are tailored for individual families of choice models. 

Motivated by this, we are interested in finding a unifying structural property (for set functions) that leads to computationally tractable optimization problems which capture both monotone submodular function maximization and assortment optimization problems as special cases. As a starting point, consider monotone functions that satisfy the following mild condition, 
\begin{eqnarray*}
\text{\emph{Subadditivity:} }	f(A)+f(B)\,\geq\, f(A\cup B)\quad \forall A,B\subseteq N.
\end{eqnarray*}
Every non-negative submodular function is subadditive and the revenue function in assortment optimization is also subadditive for a general family of random utility based choice models \citep{kok2008assortment,berbeglia2020assortment}.
Using the (standard) normalization $f(\emptyset)=0$, monotonicity implies that $f$ is non-negative. 
Given a monotone subadditive function $f$, we are interested in solving,
\[\argmax_{S\in\mathcal{F}} f(S),\]
where $\mathcal{F}$ corresponds to one of the following types of constraints that commonly arise in applications, 
\begin{eqnarray*}
&\text{Cardinality ($k$)     }\quad	&\{S\mid   |S|\leq k\}, \\
&\text{Budget/Knapsack $\left(\{b_i\}_{i\in{N}},B\right)$     }\quad &\{S\mid b(S)\leq B\},\\ 
&\text{Matroid ($\mathcal{I}$)      }\quad &\{S\mid S\in \mathcal{I}\},	
\end{eqnarray*}
here $b(S)=\sum_{i\in S} b_i$ and $\mathcal{I}$ is the family of independent sets of a matroid. Note that both budget and matroid constraint include cardinality constraint as a special case. In the absence of a constraint the ground set $N$ maximizes function value (due to monotonicity). However, constrained optimization is 
computationally intractable for monotone subadditive functions even with a cardinality constraint.}
\begin{theorem}[Adapted from Theorem 6.1 in \cite{mor}]\label{mor}
Any algorithm that makes polynomial number of queries (in size $n$ of ground set), cannot have approximation guarantee better than $n^{-\Omega(1)}$ for maximizing a {\color{black}monotone subadditive function} subject to cardinality constraint.
\end{theorem}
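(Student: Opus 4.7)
I would prove the statement by a standard ``planted solution'' indistinguishability argument: construct a random family of monotone subadditive functions over which polynomially many queries cannot locate the planted piece, yet locating it is necessary for any non-trivial approximation. The plan is to lower bound the query complexity by showing that the algorithm's transcript on a random instance of the family coincides, with overwhelming probability, with its transcript on a fixed ``baseline'' function carrying no useful information.

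Concretely, set $k=\lceil\sqrt{n}\rceil$ and sample a subset $S^*\subseteq N$ uniformly among all $k$-subsets. Define
\[
f_{S^*}(A)\;=\;\max\bigl(\,|A\cap S^*|,\;|A|^{1/4}\,\bigr).
\]
The function is monotone because both arguments of the max are non-decreasing in $A$. It is subadditive because (i) $|A\cap S^*|$ is modular (hence subadditive), (ii) $t\mapsto t^{1/4}$ is subadditive on $\mathbb{R}_{\geq 0}$ by concavity together with $0^{1/4}=0$, and (iii) the pointwise maximum of subadditive functions is subadditive (easy verification using $\max(f_1,f_2)(A)+\max(f_1,f_2)(B)\geq f_i(A)+f_i(B)\geq f_i(A\cup B)$ for $i=1,2$).

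Next I would exhibit the gap. On the planted set, $f_{S^*}(S^*)=k=\Theta(\sqrt n)$. For any \emph{fixed} set $A$ with $|A|\leq k$, the random variable $|A\cap S^*|$ is hypergeometric with mean $|A|\,k/n\leq 1$, so a Chernoff--Hoeffding tail bound yields $|A\cap S^*|\leq k^{1/4}$ except with probability $n^{-\omega(1)}$. A union bound over the $\mathrm{poly}(n)$ adaptive queries of any deterministic algorithm shows that, with probability at least $1-n^{-\omega(1)}$ over $S^*$, \emph{every} queried set $A$ satisfies $|A\cap S^*|<|A|^{1/4}$. On this event $f_{S^*}$ agrees with the baseline $\bar f(A)=|A|^{1/4}$ on every query, so the transcript and the final output $\hat S$ depend only on $\bar f$, i.e. not on $S^*$. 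Applying the same tail bound to $\hat S$ gives $f_{S^*}(\hat S)\leq k^{1/4}=n^{1/8}$ with high probability. Dividing by the optimum $\sqrt{n}$ produces the claimed $n^{-\Omega(1)}$ ratio, and Yao's minimax principle extends the conclusion to randomized algorithms.

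The main obstacle is the parameter balance: the decoy term $|A|^{1/4}$ has to dominate $|A\cap S^*|$ on every queried set (otherwise even a single ``large intersection'' query would reveal geometry of $S^*$), while remaining asymptotically smaller than the planted value $|S^*|=k$. The choices $k=\sqrt{n}$ and exponent $1/4$ achieve this because the expected intersection of any $k$-set with $S^*$ is $O(1)$ while $|S^*\cap S^*|=k\gg k^{1/4}$. A secondary subtlety is to combine the two terms via $\max$ rather than $+$, so that the algorithm cannot isolate $|A\cap S^*|$ by differencing values across adaptively chosen queries; this is what makes the construction robust to arbitrary query strategies.
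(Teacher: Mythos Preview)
The paper does not actually prove this theorem; it is quoted as an adaptation of Theorem~6.1 of \cite{mor}, so there is no ``paper's own proof'' to compare against. I will therefore evaluate your construction on its own terms.

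There is a real gap. Your indistinguishability claim --- that with high probability every queried set $A$ satisfies $|A\cap S^*|<|A|^{1/4}$ --- is only argued for $|A|\le k$, yet an algorithm is free to query sets of arbitrary size. For large sets the claim is simply false: with $k=\lceil\sqrt n\rceil$ we have $f_{S^*}(N)=\max(k,n^{1/4})=k$, and for any single element $e$,
\[
f_{S^*}(N)-f_{S^*}(N\setminus\{e\})=\mathbf 1[e\in S^*],
\]
since $k-1>(n-1)^{1/4}$. Hence $n+1$ queries to $N$ and $N\setminus\{e\}$ recover $S^*$ exactly, and the algorithm then outputs the optimum. More generally, for any $A$ with $|A|\gtrsim n^{1/3}$ the mean of $|A\cap S^*|$ already exceeds $|A|^{1/4}$, so $f_{S^*}(A)=|A\cap S^*|$ with overwhelming probability and marginal queries on large sets reveal membership in $S^*$ one coordinate at a time. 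Your final remark that taking a $\max$ prevents ``differencing'' is precisely the step that fails.

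To salvage the approach you must ensure that the value on \emph{every} set (not just feasible ones) either coincides with the baseline or is otherwise independent of $S^*$. The standard device is a cap: e.g., work with $f_{S^*}(A)=\min\{\,|A|,\ \alpha+|A\cap S^*|\,\}$ for a suitable threshold $\alpha$ (in the spirit of Feige's original construction), so that large sets saturate at a value not depending on $S^*$, while small sets have $|A\cap S^*|$ dominated by the baseline with high probability. The paper's own upper-bound proof for the submodular-order setting (Theorem~\ref{impossible}) uses exactly this kind of capping --- $f(S)=\min\{|S|,2k_1\}$ on $N_1$ --- to neutralize large queries; you need an analogous mechanism here.
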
 
%
%

In light of this impossibility result, we introduce a new structural property, namely, a \emph{submodular order}, and show that it drastically alters the optimization landscape by allowing efficient algorithms with constant factor guarantee for all constraint families of interest. To define this new notion we introduce some notation. Given a permutation $\pi$ over $N$, we index elements in the order given by $\pi$. Let $r_{\pi}(S)$ denote the element in $S$ with the largest index and $l_{\pi}(S)$ the element with the smallest index. It helps to visualize the indexing as an increasing order from left to right; $r_{\pi}(S)$ is the rightmost element of $S$ and $l_{\pi}(S)$ is the leftmost. We define the marginal value function,
\[f(X|S)=f(X\cup S)-f(S).\]

\noindent \textbf{Submodular order:} A permutation $\pi$ of elements in $N$ is a submodular order if for all sets $B\subseteq A$ and $C$ to the right of $A$ i.e., $l_{\pi}(C)>r_{\pi}(A)$, we have
\begin{equation}\label{def}
f(C\mid A)\leq f(C\mid B).
\end{equation}
{\color{black} We refer to a function $f$ as a \emph{submodular order function} if there exists a submodular order for the function. We refer to function with submodular order $\pi$ simply as $\pi$-\emph{submodular ordered}.  Our main focus is on functions for which a submodular order is known but our algorithmic results extend to certain settings where a submodular order may not be known or may not even exist (discussed further in Section \ref{sec:framework}).}
\smallskip
%
%
%
%
%

\noindent \textbf{Comparison with submodular functions:}
For a submodular function $f$, inequality \eqref{def} holds for \emph{every} set $C$. 
In fact, it can be shown that a function is submodular if and only if every permutation of the ground set is a submodular order. 
From an optimization standpoint, there is a substantial difference between submodularity and submodular order. 
Consider the classic work of 
\citet{nemhauser1978analysis}, which showed that the greedy algorithm that iteratively builds a solution by adding an element with the largest marginal value, achieves a guarantee of $(1-1/e)$ for maximizing monotone submodular functions subject to cardinality constraint. 
The following example demonstrates that greedy can be arbitrarily bad even for a simple submodular order function.

\noindent \textbf{Example 1:} Consider a ground set $\{1,\cdots,2k+1\}$. Let singleton values $f(\{e\})$ equal 1 for all \emph{good} elements $e\in\{1,\cdots,k\}$, equal $\epsilon$ for all \emph{poor} elements $e\in\{k+1,\cdots,2k\}$ and finally, let $f(\{2k+1\})=1+\epsilon$. Let the first $2k$ elements be modular i.e., the value of a set $S=S_1\cup S_2$ with subset $S_1$ of good elements and $S_2$ of poor elements is simply $|S_1|+\epsilon|S_2|$. Finally, let $f(S_1\cup S_2\cup\{2k+1\})=\max\{|S_1|,1+\epsilon\}+\epsilon|S_2|$ for every subset $S_1$ of good elements and $S_2$ of poor elements. It can be verified that this function is monotone subadditive and the natural indexing $\{1,\cdots,2k+1\}$ is a submodular order. Notice that the set of all good elements has value $k$ and this is the optimal set of cardinality $k$ for every $k\geq 2$. However, the greedy algorithm would first pick element $2k+1$ and subsequently pick $k-1$ poor elements resulting in a total value of $1+\epsilon\, k$. For $\epsilon\to 0$, this is only a trivial $1/k$ approximation of the optimal value.

\emph{What about other algorithms for submodular maximization?} One may wonder if a greedy algorithm that chooses $m$ elements at each step for $m\geq 2$, admits better performance. The example above can be easily modified to show that this family of algorithms is arbitrarily bad in the worst case for any constant $m$ {\color{black}(see Appendix \ref{appx:multigrd})}. The more general schema of \emph{continuous} greedy algorithms (see 
\citet{calinescu2011maximizing}, \citet{feldman2011unified}), is  similarly ineffective. 

Local search is another well studied family of algorithms for submodular maximization (see \cite{feige2011maximizing}). These algorithms incrementally improve the value of a solution by swapping elements. For submodular order functions, local search fails to improve the value of a poor solution. 
In the example above, performing local search by swapping one element at a time (with the objective of improving function value) will not find any improvement on the set $\{k+1,\cdots,2k+1\}$. This set is a local maxima with value $1/k$ of the optimal. More generally, natural modifications of the example show that local search over $m$-tuples is arbitrarily bad in the worst case for any constant $m$ {\color{black}(see Appendix \ref{appx:multigrd})}. We now discuss our main algorithmic results for submodular order maximization and its applications.
\subsection{Our Contributions}
We start by describing our main results for submodular order maximization. The algorithms referenced in this section are presented subsequently in Section \ref{intuit}. 
All our approximation results hold more strongly under a milder version of submodular order, called \emph{weak} submodular order, defined as follows.  
\smallskip

\noindent \textbf{\emph{Weak} submodular order and $\pi$ nested sets:} Sets $B\subseteq A$ are $\pi$-nested if the left most element of $A\backslash B$ is to the right of $B$ i.e., $l_{\pi}(A\backslash B)>r_{\pi}(B)$. 
An order $\pi$ is a weak submodular order if $f(C\mid A)\leq f(C\mid B)$ for all $\pi$-nested sets $B\subseteq A$ and set $C$ with $l_{\pi}(C)>r_{\pi}(A)$.
\smallskip

The weak submodular order property does not impose any requirements on sets $A,B$ that are not $\pi$-nested. Since this is a milder condition the resulting family of functions is broader than (strong) submodular order functions. An algorithm with guarantee $\alpha$ for weak submodular order functions is also an $\alpha$ approximation for submodular order functions. Conversely, the upper bound on approximation guarantee for submodular order functions also applies to functions with weak submodular order. It is not meant to be obvious a priori but we establish that, in general, the stronger notion does not offer any benefits in terms of approximation guarantee.	
We establish the following approximation guarantees.  Table \ref{summary1} provides a summary.
\begin{theorem}\label{rescard}
For cardinality constrained maximization of a monotone subadditive function $f$ with a (known) weak submodular order, Algorithm \ref{calg} is $(1-\epsilon)\, 0.5$ approximate with $O(\frac{n}{\epsilon} \log k)$ oracle queries, for any choice of $\epsilon\in(0,1)$.
\end{theorem}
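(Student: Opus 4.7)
The plan is to analyze a threshold-based greedy that processes the ground set in $\pi$-order: given a threshold $\tau$, walk through $N$ in increasing $\pi$-order and add the current element $e$ to the running set $S$ whenever $|S|<k$ and $f(e\mid S)\geq \tau/k$. Since $\opt$ is unknown, I would run this procedure over $O(\log k/\epsilon)$ geometric guesses of $\tau$, exploiting subadditivity and monotonicity to bracket $\max_e f(\{e\})\leq \opt\leq k\max_e f(\{e\})$, so a grid in ratio $(1-\epsilon)$ is guaranteed to contain one value within a $(1-\epsilon)$ factor of $\opt/2$. Each run makes one marginal query per element, for $O(n\log k/\epsilon)$ queries overall, matching the claimed complexity.

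For the analysis at a fixed threshold, if the algorithm fills all $k$ slots, a telescoping of marginals gives $f(S)\geq k\cdot(\tau/k)=\tau$ immediately. The nontrivial case is termination with $|S|<k$: then every $o\in O\setminus S$ was skipped because $f(o\mid S_o)<\tau/k$, where $S_o$ denotes the algorithm's state at the moment $o$ was processed. My target is to show $f(O\cup S)-f(S)\leq \tau$, so that monotonicity yields $\opt\leq f(O\cup S)\leq f(S)+\tau$. Combined with the first case, choosing $\tau$ close to $\opt/2$ gives $f(S)\geq (1-\epsilon)\opt/2$ in both cases.

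The core obstacle is the bound $f(O\cup S)-f(S)=f(O\setminus S\mid S)\leq \tau$. I would telescope in $\pi$-order as $\sum_{j}f(o_{i_j}\mid S\cup\{o_{i_1},\ldots,o_{i_{j-1}}\})$ and try to upper bound each term by $f(o_{i_j}\mid S_{o_{i_j}})<\tau/k$, summing to at most $\tau$ across at most $k$ terms. In the fully submodular setting this is just diminishing returns, but weak submodular order only guarantees $f(C\mid A)\leq f(C\mid B)$ when $B\subseteq A$ is $\pi$-nested \emph{and} $l_\pi(C)>r_\pi(A)$. The conditioning set here typically contains elements of $S$ that were added \emph{after} $o_{i_j}$ was rejected --- elements lying to the right of $o_{i_j}$ in $\pi$ --- so $o_{i_j}$ fails to be to the right of the conditioning set and a direct invocation is blocked. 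To circumvent this I would exploit two structural facts: (i) $S_{o_{i_j}}$ and $O\setminus S$ are disjoint subsets of elements to the left of $o_{i_j}$; (ii) the part of $S$ added after processing $o_{i_j}$ lies entirely to the right of $o_{i_j}$. These allow splitting the conditioning set into a left block that participates in a $\pi$-nested chain terminating at $S_{o_{i_j}}$ and a right block that can be absorbed via monotonicity together with subadditivity; weak submodular order then delivers the per-term bound on the left block. Making this decomposition uniform across $j$ so that the per-term inequalities compose cleanly into the global bound $f(O\setminus S\mid S)\leq \tau$ is the principal technical challenge.
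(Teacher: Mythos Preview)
Your overall plan and the threshold enumeration are correct, and you correctly isolate the hard case ($|S|<k$) and the right target inequality. The gap is precisely where you flag it: your proposed left/right block split does not go through as stated. In the conditioning set $S\cup\{o_{i_1},\ldots,o_{i_{j-1}}\}$, the left block $S_{o_{i_j}}\cup\{o_{i_1},\ldots,o_{i_{j-1}}\}$ is generally \emph{not} $\pi$-nested over $S_{o_{i_j}}$: the rejected optimal elements $o_{i_1},\ldots,o_{i_{j-1}}$ are interleaved with the elements of $S_{o_{i_j}}$, so weak submodular order does not apply to drop them. And for the right block $S\setminus S_{o_{i_j}}$, neither monotonicity nor subadditivity gives you a per-term diminishing-returns inequality of the form $f(o\mid X\cup Y)\leq f(o\mid X)$ when $Y$ lies to the right of $o$; that direction simply is not available for submodular order functions.

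The paper avoids this obstacle by telescoping differently. Rather than peeling off one element of $O\setminus S$ at a time against the full $S$, it uses an \emph{interleaved partition} $\{O_1,\{s_1\},O_2,\{s_2\},\ldots,\{s_{k'}\},O_{k'+1}\}$ of $\opt\cup S$ and proves the block inequality
\[
f\bigl(O(\ell)\cup E(m)\bigr)\;\leq\; f\bigl(O_\ell\mid E(\ell-1)\bigr)\;+\;f\bigl(O(\ell+1)\cup E(m)\bigr),
\]
with $E(j)=\{s_1,\ldots,s_j\}$ and $O(\ell)=\cup_{j\geq\ell}O_j$. Here weak submodular order is applied once per block with $B=E(\ell-1)$, $A=E(\ell-1)\cup O_\ell$ (which \emph{are} $\pi$-nested, since $O_\ell$ sits entirely to the right of $E(\ell-1)$), and $C$ equal to everything in $\opt\cup S$ to the right of $A$. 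Summing over $\ell$ telescopes to $f(\opt\cup S)\leq f(S)+\sum_\ell f(O_\ell\mid E(\ell-1))$. The point is that $E(\ell-1)$ is \emph{exactly} the algorithm's state when every $e\in O_\ell$ was processed and rejected, so $f(e\mid E(\ell-1))<\tau$ holds directly, and one more application of weak submodular order (now legitimately $\pi$-nested) turns this into $f(O_\ell\mid E(\ell-1))\leq \tau|O_\ell|$. You never have to condition on elements of $S$ that lie to the right of the current $O_\ell$. This block-level telescope (the paper's Corollary~\ref{interleaf}) is the missing ingredient in your argument.
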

\begin{theorem}[Upper Bound]\label{impossible}
Given a monotone and subadditive function with (strong) submodular order on a ground set of size $n$, any algorithm for the cardinality constrained maximization problem that makes at most $poly(n)$ queries cannot have approximation guarantee better than $0.5+\epsilon$ for any $\epsilon>0$.
\end{theorem}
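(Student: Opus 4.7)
The plan is a standard value-oracle indistinguishability argument combined with Yao's principle. I will exhibit a distribution over ``planted'' instances $\{f_H\}$, indexed by a random hidden $k$-subset $H$ of $N$, and a single ``reference'' instance $f_0$, such that (a) every $f_H$ is monotone, subadditive, and $\pi$-submodular ordered for a common permutation $\pi$; (b) $\text{OPT}(f_H)\ge (2-o(1))\,\text{OPT}(f_0)$; and (c) any deterministic algorithm making at most $\text{poly}(n)$ queries cannot distinguish $f_H$ from $f_0$ on more than a $1/\text{poly}(n)$ fraction of the probability mass. Together these imply that on the planted distribution the algorithm returns a set of value at most $(1+o(1))\,\text{OPT}(f_0) = (1/2 + o(1))\,\text{OPT}(f_H)$ in expectation, hence no deterministic (and by Yao's principle no randomized) algorithm can beat $1/2 + \epsilon$.

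For the baseline I plan to take $N = [n]$ with $\pi = (1,2,\ldots,n)$ and $f_0(S) = \min\{|S|, k\}$, which is monotone submodular (hence trivially $\pi$-submodular ordered) with cardinality-$k$ optimum equal to $k$. To plant, I will fix a ``left region'' $L \subseteq N$ of polynomial size in $n$ but with $\binom{|L|}{k}$ superpolynomial in $n$, draw $H \subseteq L$ uniformly at random, and set $f_H(S) = f_0(S) + b_H(S)$ where the bonus $b_H$ depends only on $S \cap L$, vanishes whenever $|S \cap H| \le (1-\delta)k$, and attains value about $k$ on $S = H$. Because $b_H$ depends only on the left portion of $S$, for any $\pi$-nested pair $B \subseteq A$ and any $C$ with $l_\pi(C) > r_\pi(A)$ the marginal $b_H(C \mid A) = b_H(C \mid B) = 0$ whenever $C$ lies to the right of $L$, so the submodular-order inequality for $f_H$ reduces to the submodular-order inequality for $f_0$ together with a mild condition on $b_H$ restricted to subsets of $L$.

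The indistinguishability step will use a union bound: for any fixed algorithm with query set $\mathcal{Q}$ of size $n^c$, the probability (over random $H$) that some $S \in \mathcal{Q}$ satisfies $|S \cap H| > (1-\delta)k$ is at most $|\mathcal{Q}| \cdot \max_S \Pr[|S \cap H| > (1-\delta)k]$, and this maximum probability is $n^{-\omega(1)}$ by a standard hypergeometric tail bound (using $|L| \gg k$ and $|S| \le k$). Conditioned on this high-probability event the algorithm's transcript under $f_H$ is identical to its transcript under $f_0$, so it necessarily outputs a set of value at most $\max_{|S|\le k} f_0(S) = k$, yielding approximation ratio $k/(2k - o(k)) \le 1/2 + \epsilon$ for $n$ large enough. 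Driving $\delta \to 0$ and $n \to \infty$ gives the claim for any $\epsilon > 0$.

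The main obstacle is in designing the bonus $b_H$: it must be (i) subadditive (which rules out hard threshold bonuses such as $k\cdot\mathbbm{1}[H\subseteq S]$, as the Example~1 style failure of subadditivity on $H \setminus \{h\}$ and $\{h\}$ shows), (ii) genuinely non-submodular (otherwise $f_H$ would be submodular and the $(1-1/e)$ guarantee would beat $1/2$), and (iii) compatible with the submodular order $\pi$ restricted to $L$. The key leverage is that $\pi$-submodular order imposes no restriction on pairs $B \subseteq A \subseteq L$ that are \emph{not} $\pi$-nested, which leaves enough freedom to build a subadditive but non-submodular gadget on $L$ (reminiscent of the function constructed in Example~1, where a single ``trap''-like coordination requirement within a left block breaks submodularity while preserving subadditivity via a $\max$-of-modulars structure). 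I expect most of the technical work to live in verifying subadditivity and the $\pi$-nested marginal inequality for this gadget; the probabilistic indistinguishability and the final ratio calculation are standard.
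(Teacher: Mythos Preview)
Your proposal has a fundamental gap: placing the bonus $b_H$ entirely in the left region $L$ is incompatible with the submodular order property, so no function of the form $f_H=f_0+b_H$ with $b_H(S)$ depending only on $S\cap L$ can achieve $\text{OPT}(f_H)\approx 2k$. Here is the obstruction. For indistinguishability you need $f_H$ to agree with $f_0$ on every query with $|S\cap H|\le(1-\delta)k$; in particular $f_H(\{e\})=1$ for every singleton. Now take any element $c$ and any set $A$ with $r_\pi(A)<\pi(c)$; applying the submodular-order inequality with $B=\emptyset$ (which is both $\pi$-nested in $A$ and a subset of $A$, so this holds for the weak and the strong notion alike) gives $f_H(c\mid A)\le f_H(\{c\})=1$. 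Telescoping along $\pi$ yields $f_H(S)\le |S|$ for every $S$, hence $f_H(H)\le k$ and the bonus is forced to be nonpositive. Your stated ``key leverage'' --- that non-$\pi$-nested pairs are unconstrained --- is only true for the weak notion (the theorem is about the strong one), and in any case does not help because $B=\emptyset$ already suffices to kill the construction.

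The paper's construction avoids this by a two-block design that is essentially the opposite of yours. It takes $N=N_1\cup N_2$ with $N_1$ entirely to the left of $N_2$ in $\pi$; the hidden set $A_1$ lives in $N_1$, but the extra value is carried by the \emph{right} block $N_2$, whose elements have large singleton value $\alpha=k_1/k_2$. The marginal of each $i\in N_2$ given $S_1\subseteq N_1$ is defined so that it \emph{always decreases} as $S_1$ grows --- it merely stops decreasing once $|S_1\cap A_1|$ passes a small threshold $r$, whereas it keeps decreasing to $0$ for elements of $S_1\setminus A_1$. This monotone-decreasing interaction is exactly what makes the strong submodular order verifiable (via Lemma~\ref{elementwise}), while $A_1\cup N_2$ attains value $2k_1-r$ and every feasible set with $|S\cap A_1|\le r$ has value at most $k_1+k_2=(1+o(1))k_1$. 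The indistinguishability and Yao pieces of your outline are fine; what is missing is that the bonus must be realized by a separate right-side block whose marginals are \emph{modulated}, not created, by the hidden left-side set.
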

More generally, we have the following approximation results for budget and matroid constraint. Note that  corollary of Theorem \ref{impossible}, no efficient algorithm can have approximation guarantee strictly better than $0.5$ for maximization subject to these constraints. 
\begin{theorem}\label{resbudget}
For budget constrained maximization of a monotone subadditive function $f$ with a (known) weak submodular order,
\begin{enumerate}[(i)]
\item Algorithm \ref{bcalg} is $(1-\epsilon)/3$ approximate with query complexity $O(\frac{n}{\epsilon} \log n)$, $\forall \epsilon\in(0,1)$.
\item  Algorithm \ref{5balg} is $0.5-\epsilon$ approximate with query complexity $O(\frac{1}{\epsilon}n^{1+\frac{1}{\epsilon}} \log n)$, $\forall \epsilon\in(0,0.5)$.
\end{enumerate}
\end{theorem}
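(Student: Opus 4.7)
The plan is to lift the cardinality-constrained algorithm from Theorem~\ref{rescard} to the budget setting using two ingredients: (a) replace ``pick the next marginal best'' by ``accept the next element whose marginal density exceeds a threshold $\tau$ while scanning in $\pi$-order,'' and (b) handle expensive optimal elements via a single-element fallback (for part (i)) or partial enumeration of the heaviest-value OPT elements (for part (ii)). Throughout, processing in $\pi$-order is essential so that the weak submodular order can be invoked on $\pi$-nested pairs.

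\textbf{Part (i).} I would take Algorithm~\ref{bcalg} to be the following template. Over a geometric grid of $O(\log n/\epsilon)$ thresholds $\tau$ covering $[f^{\max}/n, \, f^{\max}]$ with $f^{\max}=\max_{e\in N}f(\{e\})$, scan elements in the submodular order $\pi$ and add $e$ to the current set $S$ whenever $f(e\mid S)/b_e\ge\tau$ and $b(S)+b_e\le B$; finally return the best of these sets and of the singleton $\{e^*\}$ attaining $f^{\max}$. For the analysis, fix the optimal set $O$, let $e^\star$ be its highest-value singleton, and consider the grid point $\tau^*$ closest to $(f(O)-f(\{e^\star\}))/(2B)$. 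Let $S$ be the greedy output at $\tau^*$ and $O_{\text{miss}}=O\setminus S$. Every $e\in O_{\text{miss}}$ was either rejected on density grounds at its scan time $S_e\subseteq S$ (so $f(e\mid S_e)<\tau^* b_e$), or on budget grounds (so $b_e$ is large, which I charge to the singleton fallback). Because $S_e$ is the $\pi$-prefix of $S$ that had been built when $e$ was considered, the pair $(S_e,S)$ is $\pi$-nested and $e$ lies strictly to the right, so weak submodular order yields $f(e\mid S)\le f(e\mid S_e)$. Summing and applying subadditivity,
\[
f(O_{\text{miss}}\mid S)\;\le\;\sum_{e\in O_{\text{miss}}} f(e\mid S)\;\le\;\tau^* B + f(\{e^\star\}),
\]
so $f(O)\le f(S)+f(O_{\text{miss}}\mid S)\le f(S)+\tau^* B+f(\{e^\star\})$. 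Balancing the three terms using the choice of $\tau^*$ gives $\max\{f(S),f(\{e^*\})\}\ge (1-\epsilon)\,f(O)/3$.

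\textbf{Part (ii).} For the $0.5-\epsilon$ bound, enumerate every subset $S_0\subseteq N$ with $|S_0|\le\lceil 1/\epsilon\rceil$ and $b(S_0)\le B$. For each $S_0$, run the same thresholded greedy as in part~(i), but restricted to extending $S_0$ by elements to the right of $r_\pi(S_0)$; return the best feasible set over all seeds and thresholds. There are $O(n^{1/\epsilon})$ seeds and $O(\log n/\epsilon)$ thresholds, giving the claimed query complexity. Let $O^{(1)},\dots,O^{(\lceil 1/\epsilon\rceil)}$ be the $\lceil 1/\epsilon\rceil$ elements of $O$ with the largest singleton values. When the enumeration hits $S_0$ equal to this set, every remaining $e\in O\setminus S_0$ satisfies $f(\{e\})\le\epsilon f(O)$. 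Repeating the argument of part~(i) from the seed $S_0$, the role of the $f(\{e^\star\})$ fallback term is played by $\epsilon f(O)$, and the density bound on $O_{\text{miss}}$ now reads $f(O_{\text{miss}}\mid S_0\cup S)\le \tau^* B+\epsilon f(O)$. Balancing two terms rather than three yields $\max\{f(S_0\cup S)\}\ge (0.5-O(\epsilon))\,f(O)$.

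\textbf{Main obstacle.} The crux is the step $f(e\mid S)\le f(e\mid S_e)$ for rejected OPT elements $e$. This needs the snapshot $S_e$ (at the moment $e$ was scanned) and the final set $S$ to form a $\pi$-nested pair with $e$ strictly to the right of $S$; this in turn forces the greedy to process elements strictly in $\pi$-order and forbids any reshuffling based on density. Carefully threading this nesting through the interaction between the scanning order, the budget-overflow case, and the seed $S_0$ in part~(ii) (whose elements must all lie to the left of the scanned suffix, else weak submodular order does not apply) is where most of the technical care will be required.
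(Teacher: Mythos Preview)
Your argument for part~(i) has a genuine gap at the step $f(e\mid S)\le f(e\mid S_e)$. You invoke weak submodular order with the nested pair $S_e\subseteq S$, but the definition requires the element $e$ to lie to the right of the \emph{larger} set, here $S$. That is only true when $S_e=S$; in general, elements of $S$ added after $e$ was scanned lie to the right of $e$, so the hypothesis $l_\pi(\{e\})>r_\pi(S)$ fails. For the same reason the preceding step $f(O_{\mathrm{miss}}\mid S)\le\sum_{e}f(e\mid S)$ does not follow from subadditivity or weak submodular order: decomposing a marginal into element-wise marginals against a fixed base set requires the elements to be to the right of that base set. The paper circumvents this by using the interleaved partition $\{O_1,\{s_1\},\dots,\{s_{k'}\},O_{k'+1},E_{k'+1},O_{k'+2}\}$ and Corollary~\ref{interleaf}, which bounds each $f(O_\ell\mid E(\ell-1))$ against the \emph{prefix} $E(\ell-1)$ rather than the full $S$; since $E(\ell-1)$ is exactly the algorithm's set when elements of $O_\ell$ were scanned, the threshold rejection gives $f(e\mid E(\ell-1))<\tau b_e$ directly, with no appeal to diminishing returns against $S$. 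The budget-overflow case is then isolated as a separate set $E_{k'+1}$ and handled via subadditivity of $f(S_i+\{i^*\})\le f(S_i)+f(\{i^*\})$.

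Part~(ii) inherits the same gap and adds a second one. Your seeded greedy runs on $f(\cdot\mid S_0)$, but as the paper notes explicitly in Section~\ref{intuit}, $f(\cdot\mid S_0)$ need not have a weak submodular order even for small $S_0$, so the analysis of the extension phase breaks. Moreover, you pick $S_0$ as the $1/\epsilon$ largest-\emph{value} singletons of $O$ and restrict the scan to the right of $r_\pi(S_0)$; but these elements can sit anywhere in $\pi$, so the restricted scan may skip most of $O$. The paper's algorithm is structurally different: it enumerates sets $X$ of the $1/\epsilon$ largest-\emph{budget} elements, uses $X$ only to filter out high-budget elements (so every remaining element has $b_e<\epsilon B$), always starts the threshold scan from the empty set, and when an overflow occurs invokes a Final~Add subroutine that evicts low-budget elements. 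The analysis then uses Corollary~\ref{forE}(ii) on the set $S_{in}$ of all elements ever picked to lower-bound $f(S)$ by $\tau\, b(S)\ge\tau(1-\epsilon)B$.
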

\begin{theorem}\label{resmatr}
Algorithm \ref{malg} is $0.25$ approximate with query complexity $O(nd)$ for the problem of maximizing a monotone subadditive function $f$ with a known (weak) submodular order, subject to a matroid constraint with matroid rank $d$. 
\end{theorem}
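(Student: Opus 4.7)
The plan is to first pin down the likely structure of Algorithm~\ref{malg}: it processes the ground set in the order prescribed by $\pi$, maintains a matroid-independent working set $S$, and at each element $e$ performs one of three actions---add $e$ if $S\cup\{e\}$ remains independent and the marginal $f(e\mid S)$ meets a threshold; consider a swap against one of the (at most $d$) elements in the fundamental circuit of $e$ with respect to $S$ if $S\cup\{e\}$ is dependent; or discard $e$ otherwise. Each outer iteration needs $O(d)$ marginal-value queries to evaluate the candidate swaps, yielding the claimed $O(nd)$ total query count.

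For the approximation guarantee, let $S$ denote the algorithm's output and $O$ an optimal independent set; without loss of generality both are bases of the matroid. My goal is to establish
\begin{equation*}
f(O\mid S) \;\leq\; 3\, f(S),
\end{equation*}
which, combined with subadditivity $f(O)\leq f(O\cup S)=f(S)+f(O\mid S)$, yields $f(S)\geq \tfrac14 f(O)$. To obtain the displayed inequality I would decompose $f(O\setminus S\mid S)$ as a telescoping sum by inserting the elements of $O\setminus S$ into $S$ one at a time in the order of $\pi$, and then apply the weak submodular order inequality term by term to replace each marginal by its value at the algorithm's internal state $S^{t(o)}$ at the moment $o$ was processed. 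The enabling observation is that every element that joins $S$ after $o$ lies strictly to the right of $o$ in $\pi$, so the $\pi$-nesting and right-most hypotheses of the weak submodular order are satisfied for the chosen reference pairs. Finally, I would charge each $f(o\mid S^{t(o)})$ to a marginal already paid for in $f(S)$ via a matroid-exchange bijection $\phi\colon O\to S$ (Brualdi) with $(S\setminus\{\phi(o)\})\cup\{o\}\in\mathcal{I}$ for every $o\in O$: either $o$ was rejected because its marginal at $S^{t(o)}$ fell below threshold, or its presence triggered a swap comparison against $\phi(o)$ (or another circuit element), and in each case the algorithm's explicit rule bounds the contribution by a constant multiple of a marginal that accrues to $S$, summing to $3\,f(S)$ overall.

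The main obstacle is reconciling the local dynamics of the swap step with the global structure of the weak submodular order, which only applies when $B\subseteq A$ are $\pi$-nested and $C$ lies entirely to the right of $A$. Because swaps can remove elements from the internal state, the sequence $\{S^t\}$ is no longer monotone, so choosing which intermediate state to use as the reference point in the telescoping/charging argument---and verifying it really does form a valid $\pi$-nested pair with the set appearing in the telescope---is the delicate part. The swap threshold in Algorithm~\ref{malg} must be calibrated precisely so that a rejected $o$ is accounted for by $\phi(o)$ up to factor $3$: a stricter rule would leave too many feasible elements on the table, while a looser one would break the matroid-exchange charging.
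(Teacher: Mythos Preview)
Your outline has the right skeleton and correctly identifies the central obstacle, but it has two concrete gaps that the paper resolves in ways you have not anticipated.

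First, the non-monotonicity you flag as the ``delicate part'' is handled not by a clever choice of reference state $S^{t(o)}$ but by a change in what the algorithm actually tracks. Algorithm~\ref{malg} maintains a second set $R$ of all elements that were ever added and later swapped out, and every marginal in the algorithm is computed as $f(j\mid S_j\cup R_j)$, not $f(j\mid S_j)$. The set $S_j\cup R_j$ grows monotonically, so the $\pi$-nesting hypothesis is automatic. The decomposition then proceeds in two layers: Corollary~\ref{interleaf} applied to the interleaved partition of $\opt\cup S\cup R$ gives $\opt\le f(S\cup R)+\sum_{j\in\opt\setminus(S\cup R)}f(j\mid S_j\cup R_j)$, and then Corollary~\ref{forE}(i) applied to the partition of $S\cup R$ into $S$ and $R$ gives $f(S\cup R)\le f(S)+\sum_{j\in R}f(j\mid S_j\cup R_j)$. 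Your single-telescope with reference states in $S^t$ cannot be made $\pi$-nested precisely because of the swaps; the two-level partition with $R$ retained is what makes the weak submodular order usable.

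Second, a direct Brualdi bijection $\phi:O\to S$ does not suffice for the charging. When an element $j\in\opt$ is rejected, its circuit lies in $S_j$, not in the final $S$; elements of that circuit may themselves be swapped out later. The paper instead builds a DAG whose edges go from each rejected or swapped-out element to the members of the circuit that caused its rejection or removal, and invokes a path-based injection lemma (Lemma~\ref{repeat}, from \cite{chekuristream}) that maps sources to sinks along reachable paths. The swap rule is calibrated so that the auxiliary value $v_j$ is nondecreasing along every edge of this DAG and at least doubles along each swap chain ($v_j\leftarrow v_{i^*}+f(j\mid S_j\cup R_j)$ with $f(j\mid S_j\cup R_j)>v_{i^*}$); this yields $\sum_{j\in R}f(j\mid S_j\cup R_j)\le f(S)$ and $\sum_{j\in\opt\setminus(S\cup R)}f(j\mid S_j\cup R_j)\le 2f(S)$, which is where the constant $3$ (and hence $1/4$) actually comes from.
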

\noindent \emph{Remarks:} We prove Theorems \ref{rescard}, \ref{resbudget}, and \ref{resmatr} in Section \ref{sec:proofsubmod}. The proof of Theorem \ref{impossible} is given in Appendix \ref{sec:upb}. Given a noisy value oracle $\hat{f}$ for a submodular order function $f$, such that 
$(1-\delta)\, f(S)\leq \hat{f}(S)\leq (1+\delta)\, f(S)\,\, \forall S\subseteq N,$
we show that our approximation guarantees are reduced by an additional multiplicative factor of $\left(1-O(\frac{n\delta}{1-\delta})\right)$. 
This property is important in applications such as assortment optimization, where the value oracle is implemented by solving an optimization problem (see Section \ref{sec:asstresult}). 
We formalize this observation with the analysis of our algorithms. 
\begin{table}{}	 

\centering
\begin{tabular}{l|c|c|c|}
\multirow{2}{*}{}   & 
\multirow{2}{*}{Lower bound}
&\multirow{2}{*}{Upper bound}
&\multirow{2}{*}{\# Queries}     \\ 
&&&\\                                                     
\hline
\multirow{2}{*}{	Cardinality} &
\multirow{2}{*}{   $0.5 -\epsilon$}
&  \multirow{6}{*}{$0.5$}& \multirow{2}{*}{$O(\frac{n}{\epsilon} \log k)$ }\\
&&&\\
\cline{1-2}\cline{4-4}                            
\multirow{2}{*}{	Budget/Knapsack     }       & 
\multirow{2}{*}{ $0.5 -\epsilon $}
& & \multirow{2}{*}{$O(\frac{1}{\epsilon}n^{1+\frac{1}{\epsilon}} \log n)$}\\ 
&&&\\
\cline{1-2} \cline{4-4}
\multirow{2}{*}{	Matroid    }      & 
\multirow{2}{*}{0.25} & & \multirow{2}{*}{$O(nd)$}\\ 
&&&\\
\cline{1-4}                   \\
\end{tabular}
\caption{Results for constrained maximization of (weak) submodular order functions. $n$ denotes the size of the ground set, $k$ the cardinality parameter, and $d$ is the matroid rank. For budget constraint we also show a faster $1/3-\epsilon$ approximation. Approximation guarantees hold for weak (and strong) submodular order functions and the upper bounds hold for strong (and weak) submodular order functions.}
\label{summary1}
\end{table}	
\smallskip

\subsubsection*{Outline for rest of the paper.}
In Section \ref{sec:appl}, we present applications of submodular order function maximization. First, we discuss new results for assortment optimization. Then, in Section \ref{sec:introstream}, we discuss connections with streaming maximization of submodular functions. We present our algorithms and the underlying insights in Section \ref{intuit}. This is followed by analysis and proofs of our main results for submodular order maximization in Section \ref{sec:proofsubmod}, where we start with useful bounds for (weak) submodular order functions that drive the analysis of our algorithms. In Section \ref{sec:card}, we prove Theorem \ref{rescard} for the cardinality constrained problem. Section \ref{sec:budget} presents the analysis of Algorithms \ref{bcalg} and \ref{5balg} (Theorem \ref{resbudget}) for budget constraint. In Section \ref{sec:matr}, we establish Theorem \ref{resmatr} for matroid constraint. 
We give the proof of the upper bound (Theorem \ref{impossible}) for (strong) submodular order functions in Section \ref{sec:upb}.  Finally, Section \ref{sec:conclusion} concludes the discussion with several directions for further research. Proofs for main results on assortment optimization are included in Appendix \ref{sec:proofassort}. 

\section{Applications of Submodular Order Functions} \label{sec:appl}
In this section, we discuss application of submodular order function maximization to assortment optimization (Section \ref{sec:asstresult}) and to streaming maximization of submodular functions (Section \ref{sec:introstream}). 
\subsection{Constrained Assortment Optimization} \label{sec:backasst}
We start by providing background on assortment optimization and discussing related work. Recall that in assortment optimization, we assume that a choice
model $\phi: N\times 2^N\to [0,1]$ is given and we are interested in the problem of finding the revenue optimizing assortment subject to some constraint. Given assortment $S$, the probability that customer chooses product $i\in S$ is given by $\phi(i,S)$. Customer may choose an \emph{outside option} (not in $S$) with probability $1-\sum_{i\in S}\phi(i,S)$. Given (fixed) prices $(r_i)_{i\in N}$, the expected revenue is,
\[R_{\phi}(S)=\sum_{i\in S}r_i \phi(i,S).\]
Out of the many choice models that have been introduced in the literature, we are interested in the following well studied choice models. 

\noindent \textbf{Multinomial Logit Choice Model (MNL)} \citep{Bradley1952RankAO,Luce59,McFadden73, Plackett75}: This model is defined by parameters $v_i\geq 0$ for $i\in N$. $v_0\geq 0$ denotes the parameter for the outside option. The probability that a customer chooses product $i$ from assortment $S$ is proportional to $v_i$. Formally, 
\[\phi(i,S):= \frac{v_i}{v_0+\sum_{e\in S} v_e}.\]
\emph{Previous results for assortment optimization:} The assortment optimization problem under MNL choice is very well studied. Notice that the objective in this problem is non-monotone (and not submodular). 
\citet{talluri} showed that the unconstrained problem can be solved optimally and the optimal solution includes all products above a price threshold. 
\citet{rusmevichientong2010dynamic} gave a polynomial time algorithm for the cardinality constrained problem. \citet{davis2013assortment} and \citet{avadhanula2016tightness} showed that the MNL optimization problem can solved optimally in polynomial time under totally unimodular constraints (TUM).	 \citet{desir2014near} showed that the budget constrained version 
is NP hard and they gave a FPTAS for the problem. 

{\color{black}
\noindent \textbf{Mixture of MNL with Customization}~\citep{omar}:  Consider a population given by $m$ types of customers. Each customer chooses according to a MNL choice model that depends on their type. The choice model of the population is described by a Mixture of MNLs --  MMNL choice model \citep{mcfadden2000mixed}. Customer type is revealed on arrival and we offer a \emph{customized} assortment based on the type. An offered assortment can be any subset of the products that we keep in our selection and suppose that we can keep at most $k$ different products in the selection. Suppose that a random customer is type $j\in[m]$ with probability $\alpha_j$. Thus, we would like to select at most $k$ products to maximize,
\[\max_{S, |S|\leq k}\sum_{j\in[m]}\alpha_j \max_{X\subseteq S} R_{\phi_j}(X),\]
here $\phi_j$ is the MNL choice model for customer type $j$. A $\frac{1}{m}$ approximation for this problem can be obtained simply by solving a cardinality constrained MNL assortment problem for each type separately and picking the best of these solutions. \citet{omar} gave a substantially stronger $\frac{\Omega(1)}{\log m}$ approximation for this problem and showed that it is NP hard to approximate the optimal assortment better than $\left(1-\frac{1}{e}+\epsilon\right)$. For constant $m$, they gave a FPTAS. The unconstrained version of this problem (where $S$ can be any subset of $N$) can be solved simply by taking the union of optimal unconstrained assortments for each type of MNL. 
}

\noindent \textbf{Markov Choice Model }\citep{blanchet2016markov}: In this model, the customer choice process is described by a discrete markov chain on the state space of products (and the outside option). Given an assortment $S$, the customer starts at product $i\in N$ with probability $\lambda_i$. A customer at a product $i\not\in S$, transitions to product $j\in N$ with probability $\rho_{ij}$ (independent of their actions prior to $i$). The random process terminates when the customer reaches a product in $S\cup\{\emptyset\}$, which is their final choice. This model generalizes a wide array of choice models (see \citet{blanchet2016markov}). 

\noindent \emph{Previous results for assortment optimization:} \citet{blanchet2016markov} introduced this model and gave a polynomial time algorithm for unconstrained optimization. 
\citet{desir} consider the problem under cardinality and budget constraints. They show that the problem is APX hard under cardinality constraint and inapproximable under TUM constraints (sharp contrast with MNL). They obtain a $0.5-\epsilon$ approximation for the cardinality constrained problem and a  $1/3-\epsilon$ approximation for the budget constrained version. 

MNL, MMNL, and Markov choice models have received significant attention in the assortment optimization literature and even the simpler of these models i.e., MNL, is very useful for modeling choice behavior in practice (for instance, see \citet{feldman2018customer}). 
\subsection{New Results for Assortment Optimization}\label{sec:asstresult}


An obvious obstacle in the application of our results 
is the absence of monotonicity in the revenue objective. 
We solve this issue by redefining the objective as follows,
\[f_\phi(S) = \max_{X\subseteq S} R_\phi(X). \]
{\color{red}
%
}
We show that the function $f_{\phi}$ is monotone and subadditive under the following (mild) condition,
\[\emph{Substitutability: }\quad  \phi\left(i,S\cup\{j\}\right)\leq \phi(i,S),\, \forall i\in S, j\not\in S.  \]
MNL, MMNL, and Markov model, all satisfy this condition. In order to evaluate $f_{\phi}(S)$, we need to solve an unconstrained assortment optimization problem on the reduced ground set $S$. Recall that previous work gives polynomial time algorithms for unconstrained optimization under the choice models of interest to us. Therefore, we have efficient implementation of the value oracle for $f_{\phi}$. More generally, if a choice model $\phi$ is such that $f_{\phi}$ has a submodular order but the unconstrained assortment problem only admits a $1-\delta$ approximation (instead of an efficient optimal algorithm), then the approximation guarantees are reduced by a multiplicative factor of $\left(1-O(\frac{n\delta}{1-\delta})\right)$. This factor is negligible for small $\delta$ and this can be useful in case the unconstrained assortment problem admits a FPTAS. 

{\color{black}
\subsubsection{Results for MNL and MMNL with Customization.}
When $\phi$ is MNL, we show that sorting products in descending order of price (breaking ties arbitrarily) is a (strong) submodular order for $f_{\phi}$. 
Consequently, all our results for constrained optimization of submodular order apply directly. 
More importantly, using the fact that submodular order functions are closed under addition (see Remark \ref{closed}), 
we immediately obtain the first constant factor approximation for the problem of assortment optimization with customization under MMNL choice. 
\begin{remark} \label{closed}
Given $p$ monotone subadditive functions $f_1,\cdots,f_m$ with the same submodular order $\pi$ and non-negative real values $\alpha_1,\cdots,\alpha_m$, the function $\sum_{j\in[m]}\alpha_jf_j$ is also monotone subadditive with submodular order $\pi$. 
\end{remark}


\begin{theorem}\label{mnlresult}
The problem of assortment optimization with customization under MMNL choice is an instance of cardinality constrained submodular order maximization. 
\end{theorem}
As a direct consequence of Theorem \ref{mnlresult}, we have a $0.5-\epsilon$ approximation for any $m$. Table \ref{summary2} provides approximation guarantees under more general constraint on the selection of products (budget and matroid). 
We establish submodular order property for MNL model and prove Theorem \ref{mnlresult} in Appendix \ref{sec:asstmnl}. 
In fact, as a corollary of the result for matroid constraint, 
we obtain an approximation result for the problem of joint \emph{pricing} and customization under MMNL choice (see Appendix \ref{sec:asstmnlpricing} for more details). 

}

\subsubsection{Results for Markov Model and Beyond.}

The order given by descending prices is not a submodular order for every choice model. The following example demonstrates this for the Markov model.
\smallskip

\noindent \textbf{Example:} Consider a ground set of 4 items indexed $i\in[4]$ in decreasing order of prices $r_1=8,r_2=4,r_3=4$ and $r_4=2$. Customer chooses in a markovian fashion starting at item 2 with probability 1. If item 2 is not available, the customer transits to item $j$ with probability $1/3$ for every $j\in \{1,3,4\}$. If $j$ is also unavailable then the customer departs with probability 1. Consider the sets $B=\{1,2\}$ and $A=\{1,2,3\}$. We have $f(A)=f(B)=4$. However, if we add item 4 to these sets we get, $f(4\mid A)=R(\{1,3,4\})-4=2/3$, whereas, $f(4\mid B)=R(2)-R(2)=0$. 

It is not obvious (to us) if there is an alternative submodular order for Markov choice model. We give a procedure that extracts a \emph{partial} submodular order for any given Markov choice model and show that all our algorithms for submodular order functions apply to assortment optimization in this model without any loss in guarantee. In fact, we show this more generally for any choice model that has the following structure.
\smallskip

\noindent \textbf{Compatible choice models:} Given a substitutable choice model $\phi$, let $S$ be an optimal unconstrained assortment on the ground set $N$. 
We say that $\phi$ is \emph{compatible} if,
\begin{eqnarray}
R_{\phi}(A\mid C)&\geq &0 \qquad  \forall A\subseteq S,\, C\subseteq N \label{prop2}\\
R_{\phi}(C\mid A)&\leq &R_{\phi}(C\mid B) \qquad \forall B\subseteq A\subseteq S,\, C\subseteq N \label{prop1}
\end{eqnarray}
\emph{Compatibility} is a structural property of optimal unconstrained assortments that gives sufficient conditions (in the absence of a submodular order) for translating algorithms (with guarantees) from submodular order maximization. 
\begin{theorem}\label{compatible}
The family of Markov choice models 
is compatible. For any compatible choice model $\phi$ that admits a polynomial time algorithm for finding optimal unconstrained assortments, Algorithm \ref{framework} runs in polynomial time and matches the guarantee obtained by algorithms for submodular order functions under cardinality, budget, and matroid constraint (see table \ref{summary2}).  
\end{theorem}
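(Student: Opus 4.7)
The proof naturally decomposes into two parts: first, verifying that the Markov choice model satisfies the compatibility conditions \eqref{prop2} and \eqref{prop1}; second, showing that Algorithm \ref{framework} (a generic procedure designed around the compatibility conditions) runs in polynomial time and inherits the guarantees of Algorithms \ref{calg}, \ref{bcalg}, \ref{5balg}, and \ref{malg}. My plan is to treat these independently and then combine.

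For Markov compatibility, I will lean on the characterization of the optimal unconstrained assortment $S^*$ given by \cite{blanchet2016markov}, which expresses the optimal revenue through a fixed-point/balance condition: every product $i \in S^*$ contributes revenue at least the continuation value of the Markov chain when $i$ is removed. To establish \eqref{prop2}, I would show that for any $A \subseteq S^*$ and any assortment $C$, adding $A$ to $C$ redirects customer trajectories that previously either exited or terminated at low-revenue items onto items of $S^*$ whose expected revenue dominates. Concretely, I would decompose the revenue of $C \cup A$ by conditioning on whether a given sample path reaches $A$, and use the balance condition on $S^*$ to show the "redirected" paths contribute at least as much revenue as under $C$ alone. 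For \eqref{prop1}, I would couple the two Markov chains corresponding to assortments $A$ and $B$ with $B \subseteq A \subseteq S^*$, using the fact that absorbing probabilities into $C$ from any starting product are monotonically non-increasing in the absorbing set: enlarging from $B$ to $A$ only intercepts additional paths before they reach $C$, and because the intercepting states lie in $S^*$, the diverted revenue was already above the contribution $C$ would have provided.

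For the algorithmic part, the crucial observation is that Algorithm \ref{framework} first invokes the polynomial-time unconstrained oracle to obtain $S^*$, and then runs a greedy-style procedure in which every marginal-value comparison only needs \eqref{prop1} applied to nested pairs $B \subseteq A \subseteq S^*$ with arbitrary $C$, and every "monotonicity" step only needs \eqref{prop2} applied to augmentations by subsets of $S^*$. I would therefore revisit the proofs of Theorems \ref{rescard}, \ref{resbudget}, and \ref{resmatr}, tracking exactly which inequalities from the (weak) submodular order property are used, and verify that in each case the analogous inequality is supplied by \eqref{prop1} or \eqref{prop2} at the points where Algorithm \ref{framework} uses them. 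This gives the matching approximation ratios (the $0.5-\epsilon$, $1/3-\epsilon$, and $0.25$ bounds summarized in Table \ref{summary2}). Polynomial runtime follows because Algorithm \ref{framework} needs only (i) one call to the unconstrained oracle for $S^*$ and (ii) the same polynomial number of revenue-oracle evaluations as the corresponding submodular order algorithm.

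The main obstacle I expect is verifying \eqref{prop1} for the Markov model, since the marginal revenue of appending $C$ to an assortment is a global quantity that depends on how flow is rerouted throughout the chain, not just locally. The coupling argument above is the cleanest route, but it requires showing that the intercepting elements of $A \setminus B$ truly "absorb more revenue than they destroy," and this is where the assumption $A \subseteq S^*$ is essential—without it, \eqref{prop1} can fail (as illustrated by the example of \cite{desir} type instances prior to the statement). A secondary but nontrivial check will be to confirm that Algorithm \ref{framework} is designed so that every one of its queries respects the nesting-in-$S^*$ requirement; otherwise the analysis breaks. Once these two pieces are in place, the rest of the proof is bookkeeping over the existing submodular-order analyses.
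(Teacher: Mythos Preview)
Your proposal has a genuine gap in the algorithmic half, stemming from a misreading of Algorithm~\ref{framework}. You describe it as making ``one call to the unconstrained oracle for $S^*$'' and then running a greedy procedure whose queries all involve nested pairs $B\subseteq A\subseteq S^*$. But Algorithm~\ref{framework} does not work this way: its \textbf{while} loop repeatedly re-solves the unconstrained problem on a shrinking ground set, incrementally extending a partial order over (up to) $n$ phases. The reason a single call cannot suffice is that the constrained optimum need not be a subset of the unconstrained optimum $S^*$, so after $\mathcal{A}$ parses the elements of $S^*$ and discards some of them, the framework must re-optimize on $(N\setminus S^*)\cup M_1$ to expose new elements $N_2$ that were not in $S^*$ at all. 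Your proposed check that ``every query respects the nesting-in-$S^*$ requirement'' therefore does not capture what is needed: the sets $A$ appearing in the analysis are subsets of \emph{different} optimal assortments $N_{k+1}\cup M^-_k$ on successively reduced ground sets, not subsets of a single fixed $S^*$.

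The paper handles this by introducing the notion of a $\{N_i,M_i\}_{i\in[p+1]}$ \emph{piece-wise submodular order} (Part~A-I of the proof), which relaxes submodular order to hold only for ``proper'' sets---those whose intersection with each phase $N_i$ lies in the retained set $M_i$. It then proves analogues of Corollaries~\ref{interleaf} and~\ref{forE} under this weaker hypothesis (Lemmas~\ref{interleafextend}--\ref{pieceextend}) and verifies by induction that compatibility forces $N_{k+1}\cup M^-_k$ to be optimal on the ground set $N^+_k\cup M^-_k$ at every phase~$k$ (Part~A-II). Without this multi-phase structure and the accompanying inductive argument, the reduction to the submodular-order analyses of Theorems~\ref{rescard}--\ref{resmatr} does not go through.

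On the Markov compatibility side, your coupling sketch for \eqref{prop1} and path-decomposition sketch for \eqref{prop2} are in the right spirit but remain vague at precisely the hard step---showing that intercepting states in $A\setminus B\subseteq S^*$ ``absorb more revenue than they destroy.'' The paper makes this precise via the externality-adjusted price machinery of \cite{desir}: it shows $r^A_i\leq r^B_i$ for all $i\in C$ (giving \eqref{prop1}) and, for \eqref{prop2}, reduces to the inequality $r^{S\setminus\{e\}}_e\geq r^{S\setminus\{e\}}_j$ for $j\notin S$, which follows from maximality of $S$. Your high-level coupling may be developable into a proof, but as written it does not supply the key inequality.
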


Algorithm \ref{framework} is introduced in Section \ref{sec:framework} and it builds on the algorithms for submodular order functions. We prove Theorem \ref{compatible} in Appendix \ref{asst:markov}.
As an application of the above theorem, we obtain the first non-trivial approximation for joint pricing and assortment optimization in the Markov model (see Appendix \ref{asst:markov} for a proof).
\begin{corollary}\label{markovcoro}
There is a 0.25 approximation for joint pricing and assortment optimization in the Markov choice model. 
\end{corollary}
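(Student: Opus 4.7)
The plan is to recast joint pricing and assortment as a matroid-constrained assortment optimization problem over an expanded ground set, and then invoke Theorem \ref{compatible} for the Markov family.

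Let $P=\{p_1,\ldots,p_r\}$ be the discrete set of admissible prices. Form the expanded ground set $N'=N\times P$ whose elements are virtual items $(i,p)$, each representing the option of selling product $i$ at price $p$ with revenue $r_{(i,p)}=p$. Extend the Markov choice model to $N'$ naturally: a customer performs the original Markov walk on $N\cup\{\emptyset\}$; on first reaching a product $i$, the walk terminates if some copy $(i,p)$ lies in the offered set $S'\subseteq N'$ (and the customer then chooses $(i,p)$), otherwise it continues. Let $\mathcal{M}$ be the partition matroid on $N'$ enforcing at most one copy per original product (if an additional cardinality bound on the assortment is present, intersect with a uniform matroid; the result is still a matroid via truncation). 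For every $S'\in\mathcal{M}$ the extended model is well-posed and exactly encodes the joint pricing objective, so the joint pricing problem is equivalent to matroid-constrained assortment optimization on $N'$ under this extended model.

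Since the extension is Markovian on the original state space $N\cup\{\emptyset\}$---the expansion only relabels the revenue collected at stopping products---it belongs to the family shown compatible in Theorem \ref{compatible}. Moreover, unconstrained assortment optimization in the extended model remains polynomial: for each product $i$, optimize over its price in $P$ to collapse $\{(i,p):p\in P\}$ into a single option, then invoke the polynomial algorithm of \cite{blanchet2016markov}. The hypotheses of Theorem \ref{compatible} are therefore met with matroid $\mathcal{M}$, and Algorithm \ref{framework} yields the claimed $0.25$ approximation.

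The main delicate step is verifying that compatibility transfers from the base Markov model to the extended model on $N'$, in particular that inequalities \eqref{prop2} and \eqref{prop1} continue to hold for subsets $A,B,C\subseteq N'$ that may include multiple price-copies of the same product. The cleanest route is to note that the compatibility argument underlying Theorem \ref{compatible} depends only on the transition structure of the Markov walk on the original state space, which is unchanged by the expansion, so the same hitting-probability reasoning yields both inequalities once one fixes any tie-breaking convention for non-matroid-feasible sets (e.g. route to the highest-priced copy present at each product).
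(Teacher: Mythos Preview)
Your reduction is the same as the paper's: expand to product--price pairs $N_P=N\times P$, impose the partition matroid enforcing at most one price per product (and note that adding a cardinality cap keeps it a matroid via truncation), then invoke the matroid case of Theorem~\ref{compatible}.

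The one place you diverge is in how you define the choice model on the expanded set. The paper simply takes the input to be a Markov choice model \emph{on $N_P$}; since that is literally an instance of the Markov family, compatibility and the polynomial unconstrained oracle come for free from Theorem~\ref{compatible} and \cite{blanchet2016markov}, and the proof is two lines. You instead keep the random walk on the original state space $N\cup\{\emptyset\}$ and attach a tie-breaking rule when several price copies of the same product are offered. That object is not a Markov choice model on $N'$ in the paper's sense (the state space of the walk is not $N'$), which is exactly why you are forced into the ``delicate step'' of re-verifying inequalities~\eqref{prop2} and~\eqref{prop1} by hand for non-independent sets. This is avoidable: if you realize the extension as a genuine Markov chain on $N_P$ (e.g., let the walk pass through the copies $(i,p_1),\ldots,(i,p_r)$ of each product in a fixed order before moving on with probabilities $\rho_{ij}$), then on matroid-feasible sets it encodes precisely the joint pricing objective, and compatibility is immediate from the theorem with no extra argument needed.
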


\begin{table}{}	 

\centering
\begin{tabular}{c|c|c|}
\multirow{2}{*}{}   & 
\multirow{2}{*}{MNL with customization}
&{Compatible choice models}\\       
&& \small{(includes Markov model)}\\                                                     
\hline
\multirow{2}{*}{	Cardinality} &\multirow{2}{*}{$\frac{\Omega(1)}{\log m}\to\mb{0.5-\epsilon}$} & 
\multirow{2}{*}{$ 0.5 -\epsilon$} \\
&&\\  
\cline{1-3}                             
\multirow{2}{*}{	Budget/Knapsack/Capacity    }        &\multirow{2}{*}{ $\frac{1}{m} \to \mb{0.5 -\epsilon}$}
&\multirow{2}{*}{ $\frac{1}{3}-\epsilon
\to\mb{0.5-\epsilon}$}\\ 
&&\\
\cline{1-3} 
Matroid      &\multirow{2}{*}{{$\frac{1}{m} \to \mb{0.25}$} } &\multirow{2}{*}{$\frac{1}{d} \to \mb{0.25}$}\\
\small{(Includes Pricing and Assortment)} &&\\
\cline{1-3}
\\ 	
\end{tabular}
\caption{Results for constrained assortment optimization. New results are stated in bold and previous state-of-the-art provided to the left of each arrow (including results implied from but not explicitly stated in previous work). Note that $m$ denotes the number of MNL models in the mixture and $d$ is the matroid rank.} 
\label{summary2}
\end{table}

\subsubsection*{Summary of Application to Assortment Optimization.} The framework of submodular order functions provides a new algorithmic tool for constrained assortment optimization. 
Given a choice model (that is not MNL or Markov), we 
summarize the key steps to check if 
this tool can be applied to obtain efficient approximations. 
\begin{enumerate}[(i)]
\item Is the unconstrained assortment problem efficiently solvable? If not, is there a FPTAS?
\item Is there a (strong or weak) submodular order? Specifically, is the descending order of revenues a submodular order?
\item If  the unconstrained assortment problem is efficiently solvable but a submodular order is not evident (or does not exist), is the choice model compatible?
\end{enumerate}
\noindent Next, we discuss a very different application of submodular order maximization. 
\subsection{Application to Streaming Submodular Maximization}\label{sec:introstream}

\noindent \textbf{The streaming model:} Consider a setting where the ground set $N$ is ordered in some arbitrary manner, say $\{1,2,\cdots,n\}$. We have a monotone submodular objective $f$ that we want to maximize subject to some constraint. Due to the large size of $N$, elements can only be accessed sequentially in order, i.e., to access element $j$ we need to parse the data stream of elements from $1$ to $j$. We also have a small working memory. Thus, we seek an algorithm that guarantees a good solution with very few passes (ideally, just one pass) over the data stream and requires very low memory (ideally, $\tilde{O}(k)$, where $k$ is the size of a maximal feasible solution). This setting has a variety of applications in processing and summarizing massive data sets \citep{cardstream}. Note that standard algorithms for submodular maximization, such as the greedy algorithm for cardinality constraint, make $\Theta(kn)$ passes over the data stream given small working memory.  
\smallskip

\noindent \emph{Previous work in streaming model:} 
The first result in this model was given by \cite{kalestream}, who gave a $\frac{1}{4p}$ approximation for maximization subject to intersection of $p$ matroids. \cite{cardstream} gave a different algorithm with improved guarantee of 
$0.5-\epsilon$ for cardinality constraint. \cite{feldman2020one} show that every streaming algorithm with guarantee better than $0.5+\epsilon$ requires $O(\epsilon n/k^3)$ memory.  \cite{budgetstream} gave a $0.4-\epsilon$ streaming algorithm for budget constrained optimization. Recently, \cite{feldman22} gave improved approximation algorithms for matroid constraint. 
\cite{chekuristream}  
showed results for very general ($p-$matchoid) constraints as well as non-monotone submodular functions. A more comprehensive review of related work can be found in \cite{budgetstream, feldman22}. 
\smallskip

{\color{black}	\noindent \textbf{Connection to submodular order maximization:} Consider a function $f$ that is $\pi$-submodular order. Given the salience of order $\pi$, many of our algorithms for optimizing $f$ are intentionally designed so that they parse elements in the order given by $\pi$. In fact, these algorithms can be efficiently implemented such that they parse the ground set exactly once and require very little memory. 
As a corollary, we recover the best known guarantees for streaming submodular maximization in the cardinality constraint case, as well as, constant factor approximation results for budget and matroid constraint.  
\begin{theorem}\label{stream}
For constrained maximization of a monotone submodular function $f$,
\begin{enumerate} [(i)]
\item Algorithm \ref{calg} gives a $(1-\epsilon)\, 0.5$ approximation in the streaming setting.
\item Algorithm \ref{bcalg} gives a $\frac{1}{3}-\epsilon$ approximation algorithm in the streaming setting.
\item Algorithm \ref{modmalg} gives a $0.25$ approximation in the streaming setting.
\end{enumerate}  
\end{theorem}
We include the proof in Appendix \ref{sec:streamproof}.  Note that Algorithm \ref{5balg}, which is $0.5-\epsilon$ approximate for budget constraint, requires working memory polynomial in $n$ and is not a streaming algorithm.  The main idea behind Theorem \ref{stream} is straightforward. Recall that a submodular function satisfies the submodular order property on every permutation of the ground set. Therefore, an $\alpha$ approximate algorithm for submodular order functions that parses the ground set only once (in submodular order) and requires low memory is, by default, an $\alpha$ approximation algorithm for streaming submodular maximization. While the connection is obvious in hindsight, it raises intriguing new questions. For example, the upper bound of 0.5 for submodular order functions (Theorem \ref{impossible}) is a consequence of their milder structure. In contrast, the upper bound of $0.5$ for streaming maximization (\cite{feldman2020one}) arises out of restrictions on memory. From a purely structural viewpoint, submodularity permits a stronger $(1-1/e)$ guarantee. \emph{Is there a precise connection between less structure and memory limitations?}} 

\section{Algorithms for Submodular Order Functions}\label{intuit}

In Example 1 we saw that in the absence of submodularity, iterative algorithms that augment the solution at each step (such as the family of greedy algorithms) may add elements that have large marginal value but \emph{blind} the algorithm from picking up more ``good" elements. Submodularity mutes this problem by guaranteeing that for any sets $S$ and $A$, if $f(A)>f(S)$ then there exists an element $e\in A\backslash S$ such that, $f(e|S)\geq\frac{f(A)-f(S)}{|A|}$. 
In general, the absence of submodularity is severely limiting (see Theorem \ref{mor}). However, for functions with submodular order we have a lifeline. Consequently, all our algorithms treat the submodular order as salient and ``process" elements in this order. 
Broadly speaking, we propose two types of algorithms. The first is inspired by greedy algorithms and the second by local search.

\subsection{Threshold Based Augmentation in Submodular Order}

Consider an instance of the cardinality constrained problem for a $\pi$ submodular function. Fix an optimal solution and given set $S$, let $\opt_S$ denote the subset of optimal products located to the right of $S$ in the submodular order ($l_{\pi}(\opt_S)>r_{\pi}(S)$). Suppose we have an algorithm that maintains a feasible set at every iteration and let $S_j$ denote this set at iteration $j$. Using weak submodular order property (and monotonicity), we can show that there exists $e$ to the right of $S_j$ such that
\[f(e\mid S_j)\geq \frac{f(S_j\cup \opt_{S_j})-f(S_j)}{k}.\]
Now, if we greedily add an element to $S_j$, we may add an element very far down in the order. This may substantially shrink the set $\opt_{S_{j+1}}$ in the next iteration. To address this issue we consider a \emph{paced} approach where we add the first element $e$ to the right of $S_j$ such that
\[f(e\mid S_j)\geq \tau(N,f,k,S_j),\]
where $\tau(N,f,k,S_j)$ is a \emph{threshold} that can depend on the instance $N,f,k$ as well as set $S_j$. In other words, we take marginal values into consideration by focusing only on elements that have sufficiently large value (but not the maximum value). From this filtered set of candidates we add the closest element to the right of $S_j$. Thus, minimizing the shrinkage in $\opt_{S_{j+1}}$. 

The simplest possible threshold is a constant independent of the iteration. Quite surprisingly, for cardinality constraint, choosing a constant threshold leads to the best possible guarantee for maximizing (strong and weak) submodular order functions. Letting \opt\ denote the optimal value, any threshold close to $\frac{\opt}{2k}$ is a good choice. Although we do not know \opt\ a priori, trying a few values on a geometric grid brings us arbitrarily close.  {\color{black}In Algorithm \ref{thalg}, let the ground set $N=\{1,2,\cdots,n\}$ be indexed in submodular order.}


\begin{minipage}{.45\linewidth}
\begin{algorithm}[H]
	\SetAlgoNoLine
	\textbf{Input:} Cardinality $k$, error $\epsilon\in(0,1)$\;
	\smallskip
	Initialize $\tau=\frac{1}{k}\max_{e\in N} f(\{e\})$\;
	\For{$i\in\{1,2,\cdots,\ceil{\log_{1+\epsilon} k}\}$}{
		$S_i=\text{Threshold Add}\left(k,(1+\epsilon)^{i-1}\tau\right)$\;
	}
	\smallskip
	\textbf{Output:} Best of $\{S_1,S_2,\cdots,S_{\ceil{\log_{1+\epsilon} k}}\}$
	\caption{$\frac{1}{2}$ for Cardinality}
	\label{calg}
\end{algorithm}
\end{minipage}\hspace{1cm}
\begin{minipage}{.45\linewidth}
\begin{algorithm}[H]
	\SetAlgoNoLine
	\textbf{Input:} Cardinality $k$, threshold $\tau$, {\color{black}$N$ indexed in submodular order}\;
	\smallskip
	Initialize $S=\emptyset$\;
	\For{$i\in\{1,2,\cdots,n\}$ and $|S|<k$}{
		\lIf {$f(i|S)\geq \tau$} {$S\to S \cup \{i\}$\;}	
	}
	\smallskip
	\textbf{Output:} Set $S$ of size at most $k$\;
	\caption{Threshold Add($k,\tau)$}
\label{thalg}
\end{algorithm}
\end{minipage}

\smallskip

\noindent \emph{Remarks:} The idea of iteratively adding elements with marginal value above a threshold has a rich history in both submodular optimization and assortment optimization. \cite{badanidiyuru2014fast} gave a fast $(1-1/e)-\epsilon$ approximation for constrained maximization of submodular functions using adaptive thresholds. \cite{cardstream} proposed an adaptive threshold algorithm for 
cardinality constraint in the streaming model. 
In the assortment optimization literature, \cite{desir} introduce a constant threshold algorithm for cardinality and budget constrained assortment optimization in the Markov model. 
\smallskip

\noindent \textbf{Generalizing to budget constraint:} Algorithm \ref{bcalg} presents a natural generalization with a threshold on the ``bang-per-buck", i.e., \emph{marginal value per unit budget}. Formally, we filter out elements $e$ such that $\frac{f(e\mid S_j)}{b_e}< \tau(N,f,B)$. We show a guarantee of $1/3$ for Algorithm \ref{bcalg}, short of the upper bound of 0.5 implied by Theorem \ref{impossible}. 

A standard technique in submodular maximization (and beyond) to obtain the best possible guarantees under budget constraint is \emph{partial enumeration}. For example, Sviridenko \cite{sviridenko2004note} considers all possible sets of size 3 as a starting point for a greedy algorithm that picks an item with best bang-per-buck at each step thereafter. Starting with an initial set $X$ is equivalent to changing the objective function to $f(\cdot\mid X)$. In case of submodular order functions $f(\cdot\mid X)$ will, in general, not have a weak submodular order even for a small set $X$. 
To preserve submodular order, we may restrict enumeration to starting sets that are concentrated early in the order. However, this limited enumeration appears to be ineffective. Instead, we propose an algorithm that starts with an empty set and parses elements in submodular order but enumerates over a small set of high budget elements which get ``special attention" via the Final Add subroutine. It should be noted that unlike Algorithm \ref{bcalg}, which only adds elements, Algorithm \ref{5balg} may discard elements that were added in previous iterations via the Final Add subroutine. 

\begin{minipage}{0.45\linewidth}
\begin{algorithm}[H]
\SetAlgoNoLine
\textbf{Input:} Budget $B$, error $\epsilon\in(0,1)$\;
\smallskip
Initialize $\tau=\frac{1}{B}\max_{e\in N} f(\{e\})$\;
\For{$i\in\{1,2,\cdots,\ceil{\log_{1+\epsilon} n}\}$}{
$S_i=\text{B-Th.\ Add}\left(B,(1+\epsilon)^{i-1}\tau\right)$\;
}
\smallskip
\textbf{Output:} Best of all \emph{singletons} and sets $\{S_1,S_2,\cdots,S_{\ceil{\log_{1+\epsilon} k}}\}$
\caption{$\frac{1}{3}$ for Budget }
\label{bcalg}
\end{algorithm}
\end{minipage}\hspace{1cm}
\begin{minipage}{0.45\linewidth}
\begin{algorithm}[H]
\SetAlgoNoLine
\textbf{Input:} Budget $B$, threshold $\tau$, $N$ indexed in submodular order\;
\smallskip
Initialize $S=\emptyset$\;
\For{$i\in\{1,2,\cdots,n\}$ and $b(S)<B$}{
\lIf {$b_i<B-b(S)$ and $\frac{f(i|S)}{b_i}\geq \tau$} {$S\to S \cup \{i\}$}	
}
\smallskip
\textbf{Output:} Feasible set $S$\;
\caption{B-Th.\ Add($B,\tau)$}
\label{bth}
\end{algorithm}
\end{minipage}
\begin{minipage}{0.6\linewidth}
\begin{algorithm}[H]
\SetAlgoLined
\textbf{Input:} Budget $B$, error $\epsilon\in(0,1)$, $N$ indexed in submodular order\;
\smallskip
Initialize collection $E=\left\{X\subseteq N\,\big|\, b(X)\leq B,\, |X|\leq 1/\epsilon\right\}$\; 
\For{$X\in E$}{
Filter ground set $N\to N\backslash \{e\mid b_e> \min_{i\in X} b_i\}$\;
Initialize $\tau=\frac{1}{B}\max_{e\in N} f(\{e\})$\;
\For{$i\in\{1,2,\cdots,\ceil{\log_{1+\epsilon} |N|}\}$}{
Initialize $S_i=\emptyset$\;
\For{$j\in\{1,2,\cdots,|N|\}$ and $b(S_i)<B$}{
		\If{$\frac{f(j|S_i)}{b_i}\geq (1+\epsilon)^{i-1}\tau$}{
			\eIf {$b_j+b(S_i)\leq B$} {$S_i\to S_i \cup \{j\}$}
			{
					Update $S_i\to \textbf{Final Add } (B,\epsilon,S_i\cup\{j\})$\; 
					Goto next $i$\;	
			}}
}}

Let $S(X)=$ Best of all feasible sets $S_i$\;
}
\smallskip
\textbf{Output:} Best of all sets $\{S(X) \mid X\in E\}$
\caption{$0.5$ for Budget}
\label{5balg}
\end{algorithm}
\end{minipage}
\begin{minipage}{0.4\linewidth}
\begin{algorithm}[H]
\SetAlgoLined
\textbf{Input:} Budget $B$, $\epsilon$, set $S$\;
\smallskip
\While{$b(S)>B$}{
Remove an element $i$ with budget $b_i< \epsilon B$ from $S$\; 
End loop if no such element exists\;
}
\textbf{Output:} $S$\;
\smallskip
\caption{Final Add}
\label{fbalg}
\end{algorithm}
\end{minipage}
\smallskip

\noindent The following example illustrates why threshold based algorithms fail under matroid constraints.
\smallskip

\noindent \textbf{Example:} Consider a matroid with rank $2^n-1$ on the ground set $\{1,\cdots,2^{n+1}-2\}$, indexed in submodular order. Let $R$ denote the set $\{1,\cdots,2^{n}-1\}$, that is, the first half of the ground set. Let $P$ denote the other half. Partition $R$ into ordered sets $\{R_1,\cdots,R_n\}$ such that $R_1$ is the set $[2^{n-1}]$ of the first $2^{n-1}$ elements, $R_2$ of the next $2^{n-2}$ elements, etc.\ Therefore, $|R_i|=2^{n-i}$ for every $i\in[n]$. Partition $P$ into ordered sets $\{P_1,\cdots,P_n\}$ so that $P_i$ is the set $\{2^{n}-1+2^{i-1},2^{n}-2+2^{i}\}$ of $2^{i-1}$ elements. Observe that $|R_i|=|P_{n-i+1}|$. We define independent sets in the matroid so that the 
set $R_i\cup \left(\cup_{j=n-i+1}^{n} P_j\right)$ is independent for every $i\in[n]$. Therefore, $P$ is independent and so is the set $R_i$ (and $P_i$) for every $i$. Additionally, let the rank of the set $R_i\cup R_{i+j}$ equal $|R_i|$ for every $j\geq 1$.  
Finally, define a \emph{modular} function $f$ so that $P$ is optimal and $f(P)=2^{n}-1$. Let $f(P_i)=f(P)/n$ and $f(e)=f(P_i)/|P_i|$, for every $i\in[n]$ and $e\in P_i$. Similarly, let $f(R_i)=f(P_{n-i+1})=f(P)/n$ and $f(e)=f(R_i)/|R_i|$, for every $i\in[n]$ and $e\in R_i$. Now, for any given threshold $\tau$, the algorithm that parses the elements in order and selects every element with marginal value exceeding $\tau$ (while maintaining independence) will pick a set with value exactly $2f(P)/n$ in this instance (translating to a factor $2/\log n$ for ground set of size $n$).


\subsection{Local Search Along Submodular Order}

To address the challenges with matroid constraint, we introduce an algorithm inspired by local search. 
Given a feasible set $S$, a local search algorithm tries to make ``small" changes to the set in order to improve function value. Often, this is done via \emph{swap operations} where a set $X$ of elements is added to $S$ and a subset $Y$ (can be empty) is removed such that $f(S\cup X\backslash Y)\geq (1+\epsilon)f(S)$, for some $\epsilon>0$, and the new set $S\cup X\backslash Y$ is feasible\footnote{
Other local search techniques exist. For instance, oblivious local search algorithms (see \cite{filmus2012tight}) may perform swaps to increase the value of a proxy function instead of the objective.}. The algorithm terminates at local maxima and for submodular functions the local maxima are within a constant factor of optimal \cite{feige2011maximizing}. 
As illustrated in Example 1, functions with submodular order have poor local maxima. 
Therefore, we perform a \emph{directed} local search where we consider elements for swap operations one-by-one in the submodular order. 
Consider the following algorithm based on this idea, 
\smallskip

\noindent \emph{Parse elements in submodular order and add $j$ to current set $S$ if there exists an element $i$ (could be $\emptyset$) such that $S':=(S\backslash\{i\})\cup\{j\}\in \mathcal{I}$ and $f(S')\geq (1+\epsilon)f(S)$.}

At first, one may prefer to set a small value of $\epsilon$. This would entail slower convergence but is expected to yield stronger approximation guarantee. However, unlike standard local search, the algorithm above parses each element exactly once. It turns out that unless $\epsilon$ is large enough, a single pass over the elements may end with a poor chain of swaps $i_1\to i_2\cdots\to i_k$. This is demonstrated in the next example. 

\noindent \textbf{Example:} Consider the independent set $S=\{1,\cdots,m\}$ with $f(S)=1$ at iteration $m+1$. Let $\{m,m+1\}$ form a circuit and suppose $m+1$ meets the criteria for a swap when $1/\epsilon=O(m^2)$ (quite small). Therefore, we have $S=[m-1]\cup\{m+1\}$ after iteration $m+1$ and let $f(S)=1+\epsilon$. In subsequent iteration, suppose $m+2$ makes a circuit with element $2\in S$ and let $f(S\cup\{m+2\}\backslash\{2\})=f(S)$. Thus, $m+2$ does not meet the criteria for a swap. However, $f(\{m,m+2\})=2$. In the next iteration, we swap out $m+1$ in favor of $m+3$ and for the resulting set let $f(S)=1+2\epsilon$. Next, we find that element $m+4$ does not meet the criteria for a swap but $f(\{m,m+2,m+4\})=3$. Inductively, after $2m$ such iterations we have an instance such that the local search solution value is $1+m\epsilon\approx 1+1/m$, while the optimal value is $m$.

At a high level, we prevent such poor chains of swaps from occurring by constantly adjusting the swap criteria so that the $k$-th element in a chain $i_1\to\cdots\to i_k$ of swaps has marginal value greater than equal to the 
\emph{sum of the marginal values of all previous elements} in the chain. 

\begin{algorithm}[H]
\SetAlgoLined
\textbf{Input:} Independence system $\mathcal{I}$, $N$ indexed in submodular order\; 
\smallskip
Initialize $S,R=\emptyset$ and values $v_j=0$ for all $j\in[n]$\;
\For{$j\in\{1,2,\cdots,n\}$}{
\lIf {$S\cup\{j\}\in \mathcal{I}$} {initialize $v_j=f(j\mid S\cup R)$ and update $S\to S \cup \{j\}$}
\Else{ 
Find circuit $C$ in $S\cup\{j\}$ and define $i^*=\underset{i\in C\backslash\{j\}}{\arg\min}\, v_i$ 
and $v_C=v_{i^*}$\; 
\If  {$f(j|S\cup R)
	>v_C$} { 
	$v_j\to v_C + f(j\mid S\cup R)$\;
	$S\to (S\backslash\{i^*\})\cup \{j\}$ and $R\to R \cup \{i^*\}$\;
}}}
\smallskip
\textbf{Output:} Independent set $S$
\caption{$\frac{1}{4}$ for Matroid} 
\label{malg}
\end{algorithm}
\smallskip

\noindent \emph{Remark:} For functions with strong submodular order, we can use marginals $f(j\mid S)$ instead of $f(j\mid S\cup R)$ without changing the approximation guarantee. The modified algorithm (Algorithm \ref{modmalg}) and its performance analysis is included in Appendix \ref{sec:streamproof}. The fact that set $S\cup R$ grows monotonically in Algorithm \ref{malg} is critical in application to assortment optimization of compatible choice models. On the other hand, using $f(j\mid S)$ makes it possible to implement Algorithm \ref{modmalg} in the streaming setting where memory is small ($\tilde{O}(d)$).
\subsection{Algorithms for Constrained Assortment Optimization}\label{sec:framework}
In this section, we translate the algorithms from submodular order maximization to constrained assortment optimization. Recall  
that the revenue objective in assortment optimization can be a non-monotone function. So we transform the objective to $f_{\phi}(S)=\max_{X\subseteq S} R_{\phi}(S)$, which is monotone and subadditive for choice models that are substitutable. In an ideal scenario, we have a choice model $\phi$ where the unconstrained assortment optimization can be solved efficiently (or admits a FPTAS) and $f_{\phi}$ has a submodular order.  
If a submodular order is not evident, or does not exist, but the choice model is compatible (a property of optimal unconstrained assortments), we propose a new framework. 

Our framework for 
compatible choice models builds on the algorithms for submodular order functions and sequentially constructs an order over the ground set, satisfying a relaxed notion of the submodular order property. 
To define the framework, we need some notation. Let $\mathcal{A}$ represent an algorithm for constrained submodular order maximization (out of Algorithms \ref{calg}, \ref{bcalg}, \ref{5balg} and \ref{malg}). An instance of the assortment optimization problem is given by set $N$, function $f_{\phi}$, and constraint $\mathcal{F}$.
Let $\Gamma_{\mathcal{A}}(N,f_{\phi},\mathcal{F})$ define the set of parameter settings that $\mathcal{A}$ enumerates. We explicitly define this set for each algorithm later. 
For brevity, we write $\Gamma_{\mathcal{A}}(N,f_{\phi},\mathcal{F})$ simply as $\Gamma_{\mathcal{A}}$. 
Given a parameter setting $\gamma\in \Gamma_{\mathcal{A}}$, 
let 
\[(S,R):=\mathcal{A}_{\gamma}(f,N,\pi,\mathcal{F}),\] 
denote the output of $\mathcal{A}$ 
when ground set $N$ is ordered according to $\pi$. 
The set $S$ is the feasible solution generated by $\mathcal{A}_{\gamma}$. $R$ is a minimal set such that every query made by $\mathcal{A}_{\gamma}$ during its execution is a marginal value $f(\cdot\mid X)$ for some $X\subseteq S\cup R$. Finally, let $U_{\phi}(X)$ denote an optimal unconstrained assortment on ground set $X$. Observe that $f_{\phi}(X)$ is the expected revenue of $U_{\phi}(X)$. Before stating our new framework, we explicitly define $\Gamma_{\mathcal{A}}$ and $R$ for each algorithm.
\smallskip

\noindent \textbf{$\Gamma_{\mathcal{A}}$ and $R$ for Algorithms \ref{calg}, \ref{bcalg}, and \ref{5balg}:} $\Gamma_\mathcal{A}$ is the set of all threshold values $\tau$ tried in the algorithm. For Algorithm \ref{5balg}, the set also includes sets $X$ of high budget elements  used to filter the ground set. In Algorithms \ref{calg} and \ref{bcalg} marginal values are evaluated only w.r.t.\ subsets of the output $S$. So $R=\emptyset$ for every parameter setting. In Algorithm \ref{5balg}, $R$ is the set of elements discarded by the Final Add subroutine (Algorithm \ref{fbalg}).
\smallskip

\noindent \textbf{$\Gamma_{\mathcal{A}}$ and $R$ for Algorithm \ref{malg}:} There is no enumeration in Algorithm \ref{malg} so $\Gamma_{\mathcal{A}}$ is a singleton set (the \textbf{for} loop executes once). $R$ is the set of all elements that were picked by the algorithm and then swapped out. 
\medskip


\begin{algorithm}[H]
\SetAlgoLined
\textbf{Input:} Algorithm $\mathcal{A}$, set $N$, constraint $\mathcal{F}$, oracles for $f_\phi,U_\phi$\; 
\smallskip
\For{$\gamma \in \Gamma_{\mathcal{A}}$}{
Initialize $\hat{N}=U_{\phi}(N)$, $M=\emptyset$ and arbitrary order $\pi_{\hat{N}}:\hat{N} \to \{1,\cdots,\hat{N}\}$\;
\While{$\hat{N}\backslash M \neq \emptyset$}{
$(S,R)=\mathcal{A}_{\gamma}\left(f_{\phi},\hat{N},\pi_{\hat{N}},\mathcal{F}\right)$\;
$M= S\cup R$\;
$N\to (N\backslash \hat{N})\cup M$\;
$\hat{N}\to U_{\phi}(N)\cup M$\; 
Update $\pi_{\hat{N}}$ by adding elements in $\hat{N}\backslash M$ last in the order\; 
}
$S_\gamma:=S$
}
\smallskip
\textbf{Output:} Best of $\{S_{\gamma}\}_{\gamma\in \Gamma_{\mathcal{A}}}$
\caption{Constrained Assortment Optimization for Compatible Choice Models} 
\label{framework}
\end{algorithm}
\medskip



At a high level, given a constrained assortment optimization problem without a submodular order, the framework sequentially constructs a (partial) submodular order. It starts with the unconstrained optimal assortment $U_{\phi}(N)$, denoted as $N_1$ for brevity. 
For a compatible choice model, we show that 
every order $\pi$ that places elements in $N_1$ before all other elements, defines a partial submodular order in the following sense, 
\[f(C\mid A)\leq f(C\mid B)\qquad \forall\, B\subseteq A\subseteq N_1,\,  C\subseteq N.\]
Given this insight and the fact that our algorithms for submodular order functions parse the ground set in submodular order, the framework now executes $\mathcal{A}$ with the truncated ground set $N_1$. 
Let $M_1=S\cup R$ denote the output of $\mathcal{A}$ after this first phase. 
The set $N_1\backslash M_1$ of elements that are parsed and rejected by $\mathcal{A}$ are now discarded and we update $N\to (N\backslash N_1)\cup M_1$, which is the set of remaining elements. Having parsed elements in $N_1$, the framework now augments the (partial) submodular order by re-solving for an unconstrained optimal assortment on the new ground set $N$. 
The new set $U_{\phi}(N)$ may now include elements that were not previously parsed by $\mathcal{A}$. Let $N_2=U_{\phi}(N)\backslash M_1$ denote this new set of elements.  The framework adds $N_2$ to the end of the partial submodular order (to the right of $N_1$) and executes $\mathcal{A}$ now on the ground set $M\cup N_2$. Repeating this process several times, new elements are incrementally passed into $\mathcal{A}$ in a growing partial order $\pi$. The algorithm terminates when no new elements appear in the unconstrained assortment (after at most $n$ phases). 

To summarize, Algorithm \ref{framework} employs the algorithms for submodular order maximization to solve a constrained assortment optimization when it is not evident that the choice model has a submodular order. Note that this framework can be used to solve constrained assortment optimization for any choice model which admits an efficient algorithm for solving the unconstrained problem. However, 
the approximation guarantees may not hold for non-compatible choice models. The analysis of Algorithm \ref{framework} is presented in Appendix \ref{asst:markov} (and Appendix \ref{appx:missing}). 

\section{Approximation Guarantees}\label{sec:proofsubmod} 
In this section, we analyze the performance of our algorithms for maximizing functions with (known) submodular order. In Section \ref{sec:upb}, we prove an upper bound (Theorem \ref{impossible}) on the approximation guarantee of efficient algorithms. For submodular functions, given a set $A$ and partition $\{O,E\}$ of $A$ we have, 
\[f(A)= f(E)+f(O|E)\,\leq\, f(E)+\sum_{i\in O} f(i\mid E).\] 
The inequality above is at the heart of proving tight guarantees for maximizing monotone submodular functions. 
For functions with submodular order this property need not hold unless all elements of $O$ are located entirely to the right of $E$ in the submodular order. In the following we show stronger upper bounds that play a crucial role in the analysis of all our algorithms. First, we define the notion of interleaved partitions. 
\smallskip 

\noindent \textbf{Interleaved partitions:} Given a set $A$ and an order $\pi$ over elements, an interleaved partition 
\[\{O_1,E_1,O_2,\cdots,O_{m},E_{m}\}\] 
of $A$ is given by sets $\{O_{\ell}\}_{\ell\in[m]}$ and $\{E_{\ell}\}_{\ell\in[m]}$ that alternate and never cross each other in the order $\pi$. Formally, for every $\ell\geq1$ we have $r_{\pi}(O_{\ell})<l_{\pi}(E_{\ell})\leq r_{\pi}(E_{\ell})<r_{\pi}(O_{\ell+1})$. To assist the reader we note that letters $O$ and $E$ signify odd and even numbered sets in the partition. As noted previously, for functions with submodular order the upper bound $f(A)\leq f\left(\cup_{\ell\in[m]}E_\ell \right)+f\left(\cup_{\ell\in[m]}O_\ell \mid \cup_{\ell\in[m]}E_\ell \right)$ does not always hold. Instead, we establish a family of (incomparable) upper bounds parameterized by permutation $\sigma:[m]\to[m]$. The flexibility provided by permutation $\sigma$ is particularly helpful in analyzing Algorithm \ref{5balg} and Algorithm \ref{malg}, where elements may be swapped out. 

The following set unions will be used extensively in the lemmas that follow. Given permutation $\sigma$, let
\[E(j)=\underset{\ell\leq j}{\bigcup}\, E_{\ell} \text{  and  } O_{\sigma}(j)=\underset{\ell\, \mid\, \sigma(\ell)\geq\, \sigma(j)}{\bigcup} O_{\ell},\] 
with $E(0):=\emptyset$ 
and $O_{\sigma}(m+1):=\emptyset$. When $\sigma(\ell)=\ell,\, \forall \ell\in[m]$, we use the shorthand $O(j)=\cup_{\ell\geq j}O_\ell$. In this notation, $E(m)=\cup_{\ell\in[m]}E_\ell$, $O(1)=\cup_{\ell\in[m]}O_\ell$. Thus, $A=O(1)\cup E(m)$. Next, for every $j\in[m]$ define 
\[L_\sigma(j)=\underset{\ell\, \mid\, \ell< j,\, O_\ell\subseteq O_\sigma(j)}{\bigcup} O_\ell,\] 
which is the union over sets $O_\ell$ that are to the left of $O_j$ in submodular order and to the right of $O_j$ in permutation $\sigma$. 
Notice that when $\sigma(\ell)=\ell,\, \forall \ell\in[m]$, we have $L_{j,\sigma}=\emptyset$ for every $j\in[m]$. 

\begin{lemma}\label{mainleaf}
Given a set function $f$ with weak submodular order $\pi$, set $A$ with interleaved partition $\{O_{\ell},E_{\ell}\}_{\ell=1}^m$, and a permutation $\sigma:[m]\to[m]$, 
we have
\[f(A) \leq f\left(E(m) \right) + \sum_{\ell\in[m]} f\left(O_{\ell}\mid  L_{\sigma}(\ell)\cup E(\ell-1)\right). \]
\end{lemma}
\begin{proof}{Proof.}
The following inequalities are crucial for the proof,
\begin{equation}\label{induce}
f\left( O_\sigma(\ell)\cup E(m)\right)\leq f\left(O_{\ell} \mid L_\sigma(\ell) \cup E(\ell-1) \right)+ f\left( O_\sigma(\ell) \cup E(m)\backslash O_\ell\right) \quad \forall\, \ell\in[m].
\end{equation}
Before we establish \eqref{induce},	observe that by summing these inequalities for $\ell\in [m]$ we get
\begin{eqnarray*}
\sum_{\ell\in[m]}f\left( O_\sigma(\ell)\cup E(m)\right)-\sum_{\ell\in[m]}f\left( O_\sigma(\ell) \cup E(m)\backslash O_\ell\right)&&\leq \sum_{\ell\in[m]}f\left(O_{\ell} \mid L_{\sigma}(\ell) \cup E(\ell-1) \right),\\
f\left( O(1)\cup E(m)\right)-f\left( E(m)\right) &&\leq \sum_{\ell\in[m]}f\left(O_{\ell} \mid L_\sigma(\ell) \cup E(\ell-1) \right),\\
f(A)-f\left( E(m)\right) &&\leq \sum_{\ell\in[m]}f\left(O_{\ell} \mid L_\sigma(\ell) \cup E(\ell-1) \right),
\end{eqnarray*} 
as desired. 
It remains to show \eqref{induce}. Consider arbitrary $\ell\in[m]$ and notice that sets $B:= L_\sigma(\ell) \cup E(\ell-1)$ and $A:= L_\sigma(\ell) \cup E(\ell-1)\cup O_{\ell}$ are $\pi$-nested. Moreover, the set $C:=O_\sigma(\ell)\cup E(m) \backslash A$ lies entirely to the right of $A$. From weak submodular order property we have,
$f\left(C \mid A\right) \leq f(C\mid B)$.
Consequently,
\begin{eqnarray*}
f\left( O_\sigma (\ell)\cup E(m)\right)&= &f\left(A \right)+ f\left(C \mid A \right)\\
&\leq &f\left(B \right)+ f\left(O_{\ell} \mid B\right) +f\left(C \mid B\right)\\
&= &f\left(O_{\ell} \mid B\right) + f\left(B\cup C\right)\\
&= &f\left(O_{\ell} \mid L_\sigma(\ell) \cup E(\ell-1) \right) + f\left(O_\sigma (\ell)\cup E(m)\backslash O_\ell \right).
\end{eqnarray*}
\hfill\Halmos \end{proof}
\noindent \emph{Remark:} The following corollaries of the general bound in Lemma \ref{mainleaf} suffice for all our analyses. In the first corollary we choose $\sigma$ to be identity. In the second, we let $\sigma$ be the reverse order permutation. 
\begin{corollary}\label{interleaf}
Given a set function $f$ with weak submodular order $\pi$, set $A$ with interleaved partition $\{O_{\ell},E_{\ell}\}_{\ell=1}^m$, and the permutation $\sigma(\ell)=\ell$ for every $\ell\in[m]$,  
we have
\[f(A) \leq f\left(E(m) \right) + \sum_{\ell\in[m]} f\left(O_{\ell}\mid  E(\ell-1)\right). \]
\end{corollary}
\begin{corollary}\label{forE}
Consider a set function $f$ with weak submodular order $\pi$ and set $A$ with interleaved partition $\{O_{\ell},E_{\ell}\}_{\ell=1}^m$. 
Let $L_\ell=\{e\in A\mid \pi(e)<l_{\pi}(E_\ell)\}$. Then, 
\begin{enumerate}[(i)]
\item $f(A)\leq f\left(E(m)\right) + \sum_{\ell\in[m]} f\left(O_\ell \mid E(\ell-1)\cup O(1)\backslash O(\ell) \right),$
\item $\sum_{\ell\in[m]} f\left(E_\ell \mid L_{\ell}  \right) \leq f\left(E(m) \right).$ 
\end{enumerate}
\end{corollary}
\begin{proof}{Proof.}
Applying Lemma \ref{mainleaf} with $\sigma[\ell]=m-\ell+1$ for $\ell\in[m]$, we have
\begin{eqnarray*}
f(A)&\leq &f\left(E(m)\right) + \sum_{\ell\in[m]} f\left(O_\ell \mid E(\ell-1)\cup O(1)\backslash O(\ell) \right).
\end{eqnarray*}
To complete the proof we use the decomposition,
\[f(A)=\sum_\ell \left[f\left(E_\ell\mid L_\ell\right) + f\left(O_\ell \mid E(\ell-1)\cup O(1)\backslash O(\ell) \right) \right].\]
\hfill\Halmos \end{proof}

\subsection{Cardinality Constraint}\label{sec:card}
\begin{proof}{Proof of Theorem \ref{rescard}.}
We start by calculating the number of queries made by Algorithm \ref{calg}.
The Threshold Add subroutine makes at most $n$ queries. Algorithm \ref{calg} calls the Threshold Add subroutine $\ceil{\log_{1+\epsilon}k}$ times. This results in $O(n\log_{1+\epsilon}k)=O(\frac{n}{\epsilon}\log k)$ queries.

Next, we establish the approximation guarantee. Let $\tau_i=\tau(1+\epsilon)^{i-1}$. We use \opt\ to denote both the optimal solution and function value. Notice that for $\opt\leq 2\max_{e\in N} f(\{e\})$, we have $f(S_{\ceil{\log_{1+\epsilon} k}})\geq (1-\epsilon)\,0.5\, \opt$. So from here on we let $\opt> 2\max_{e\in N} f(\{e\})$. Consequently, there exists an $i$ such that
\[(1-\epsilon)\frac{\opt}{2k} \leq \tau_i \leq \frac{\opt}{2k}.\]
We show that $f(S_i)\geq (1-\epsilon)\, 0.5\, \opt$. For convenience, we omit $i$ from the subscript and write $S_i$ simply as $S$ and $\tau_i$ as $\tau$. 

Let $k'$ denote the cardinality of set $S$. By definition of Threshold Add, \[f(S)\geq k'\tau.\]
When $k'=k$ this gives us $f(S)\geq (1-\epsilon)\, 0.5\, \opt$. So let $k'<k$.  
From monotonicity of the function we have, $\opt\leq f(\opt \cup S)$. So consider $\opt\cup S$ and its interleaved partition \[\{O_1,\{s_1\},O_2,\{s_2\},\cdots,\{s_{k'}\},O_{k'+1}\},\] where $s_j$ denotes the $j$th element added to $S$. Set $O_{j+1}$ contains all elements in \opt\ between $s_j$ and $s_{j+1}$, i.e., $\pi(s_j)<l_{\pi}(O_{j+1})$ and  $r_{\pi}(O_j)<\pi(s_j)$ for every $j\in [k']$. Note that some sets may be empty. Applying Corollary \ref{interleaf} on $\opt \cup S$ with $E_{j}=s_j$ for $j\in[k']$ we have,
\begin{eqnarray}
\opt&\leq &f\left(E(k')\right) + \sum_{\ell\in[k']} f\left (O_{\ell}\mid E(\ell-1) \right), \nonumber\\
&= &f(S)+\sum_{\ell\in[k']} f\left (O_{\ell}\mid E(\ell-1) \right).\label{card} 
\end{eqnarray}
Using weak submodular order we have for every $\ell\geq 1$,
\begin{eqnarray*}
f\left (O_{\ell}\mid E(\ell-1) \right)&\leq &\sum_{e\in O_{\ell}}f\left (e\mid E(\ell-1)\right) \leq \tau |O_{\ell}|, 
\end{eqnarray*}
{\color{black}	where the second inequality follows the fact that for $k'<k$, elements in $\opt\backslash S$ did not meet the criteria in Threshold Add (Algorithm \ref{thalg})}. Plugging this into \eqref{card} and using the upper bound $\tau\leq 0.5\opt/k$ we get,
\begin{eqnarray*}
\opt 
\leq f(S) + k\tau \leq f(S)+0.5\, \opt.
\end{eqnarray*}
This completes the proof. 
Now, suppose we have a noisy function oracle $\hat{f}$ such that, $(1-\delta)f(S) \leq \hat{f}(S)\leq (1+\delta) f(S)\,\, \forall S\subseteq N$. Observe that $f(e\mid S) \leq \frac{1}{1-\delta}\hat{f}(S+e)-\frac{1}{1+\delta}\hat{f}(S)\leq \hat{f}(e\mid S) + \frac{2\delta}{1-\delta} f(S+e)$. For $k'=k$, we have, $f(S)\geq \frac{1}{1+\delta}\hat{f}(S)\geq \frac{1}{1+\delta} k'\tau\geq \frac{1}{1+\delta}(1-\epsilon)0.5\,\opt.$ For $k'<k$, we have,
\[f\left (e\mid E(\ell-1) \right)\,\leq\, \hat{f}\left (e\mid E(\ell-1) \right)\, +\,2\frac{\delta}{1-\delta} f(e \cup E(\ell-1))\,<\, \tau+2\frac{\delta}{1-\delta}\opt\quad \forall e\in O_\ell.\] 
Therefore, $f(S)\geq \left(0.5-\frac{2k\delta}{1-\delta}\right) \opt$. Overall, in the presence of a noisy oracle we have have approximation guarantee  $\min\left\{\frac{0.5}{1+\delta}(1-\epsilon),\, 0.5-\frac{2k\delta}{1-\delta} \right\}=(1-\epsilon)\left(1-O(\frac{n\delta}{1-\delta})\right)\, 0.5$.

\hfill\Halmos \end{proof}


\subsection{Budget Constraint}\label{sec:budget}
\begin{proof}{Proof of Theorem \ref{resbudget} (i).} 

The query complexity of Algorithm \ref{bcalg} is identical to Algorithm \ref{calg}. To establish the approximation guarantee, observe that we ignore elements $i\in N$ with $b_i>B$. Also, let $\sum_{i\in N}b_i>B$ (otherwise picking the ground set is optimal). 
Let $\tau_i=\tau(1+\epsilon)^{i-1}$. Notice that for $\opt\leq 2\max_{e\in N} f(\{e\})$ the final solution has value at least $0.5\, \opt$ due to the singleton $\argmax_{e\in N} f(\{e\})$. So from here on we let $\opt> 2\max_{e\in N} f(\{e\})$. Consequently, there exists an $i$ such that
\[(1-\epsilon)\frac{2\opt}{3B} \leq\, \tau_i \leq\, \frac{2\opt}{3B}.\]
We will show that $f(S_i)\geq \frac{1-\epsilon}{3}\opt$. Let $B_i$ denote the total budget utilized by set $S_i$. By definition of B-Threshold Add, \[f(S_i)\geq B_i\tau_i.\]
Thus, for $B_i\geq \frac{1}{2}B$ we have $f(S_i)\geq \frac{1-\epsilon}{3} \opt$. So let $B_i<\frac{1}{2}B$. Similar to the proof of Theorem \ref{rescard}, consider the set $\opt\cup S_i$ and its interleaved partition \[\{O_1,\{s_1\},O_2,\{s_2\},\cdots,\{s_{\kappa-1}\},O_\kappa\cup\{i^*\},\cdots,\{s_{k_i}\},O_{k_i+1}\},\]
where $s_j$ is the $j$-th element added to $S_i$. The main difference is the possible existence of element $i^*$ (in \opt)	 that meets the threshold requirement but cannot be added to $S_i$ due to the budget constraint. Note that there can only be one such element as $B_i<\frac{1}{2}B$. Applying Corollary \ref{interleaf}, we have 
\[f(\opt\cup S_i) \leq f(S_i)+f(\{i^*\}\mid E(\kappa-1)) + \sum_{\ell\in[k_i+1]} f(O_\ell \mid E(\ell-1)). \]
We consider two cases based on existence of $i^*$. Let \alg\ denote the value of solution generated by the algorithm.

\noindent \textbf{Case I:} $i^*$ does not exist. Let $B_j$ denote the total budget of set $O_j$. Using the upper bound $f(O_j \mid E(j-1))\leq \tau_iB_j$ for every $j\in[k_i+1]$, we have
\[\opt\leq f(S_i) + \tau_i \sum_{j\in[k_i+1]} B_j = f(S_i) + \frac{2}{3}\opt. \]
Thus, $\alg\geq f(S_i)\geq \frac{1}{3}\opt$.

\noindent \textbf{Case II:} $i^*$ exists. Then, $f(\{i^*\}\mid E(\kappa-1))\geq \tau_i b_{i^*}$ and $b(S_i)+b_{i^*}>B$. Therefore,
\begin{equation}\label{max}
f(S_i)+f(\{i^*\}\mid E(\kappa-1))\geq \tau_i B \geq (1-\epsilon)\frac{2}{3} \opt.
\end{equation}
Now, the algorithm outputs the best of $S_i$ and all singletons. Therefore,
\[2\alg\geq  f(S_i)+f(i^*) \geq f(S_i)+f(\{i^*\}\mid E(\kappa-1)),\]
here second inequality follows from submodular order property. 
Using \eqref{max} completes the proof. 

Given a noisy function oracle $f$ with multiplicative error $(1\pm\delta)$, we have, $\hat{f}(S_i)\geq B_i\tau_i$. For $B_i\geq 0.5 B$, we get, $f(S_i)\geq \frac{1-\epsilon}{3(1+\delta)}\,\opt$. For $B_i<0.5 B$ and when $i^*$ does not exist, we have, $f(O_j \mid E(j-1))\leq \tau_iB_j+2\frac{|O_j|\delta}{1-\delta}\opt$ for every $j\in[k_i+1]$. Thus, $f(S_i)\geq (1-\frac{6n\delta}{1-\delta})\, \frac{\opt}{3}$. Finally, if $i^*$ exists we have, $\hat{f}(S_i)+\hat{f}(\{i^*\})\geq \tau_i B$, and therefore, $\alg \geq (1-O(\delta))\frac{\opt}{3}$. Overall, in the presence of a noisy oracle we have approximation guarantee $\frac{\min\{(1-O(\delta))(1-\epsilon),\, 1-\frac{6n\delta}{1-\delta}\}}{3}=(1-\epsilon) \left(1-O(\frac{n\delta}{1-\delta})\right)\, \frac{1}{3}$.

\hfill\Halmos \end{proof}

%
\begin{proof}{Proof of Theorem \ref{resbudget}(ii).}
For each $X$ the algorithm makes $O(\frac{n}{\epsilon}\log n)$ queries and there are $O(n^{1/\epsilon})$ possible sets $X$ leading to $O(\frac{1}{\epsilon}n^{1+\frac{1}{\epsilon}} \log n)$ queries in total.

To show the guarantee we focus on set $S(X)$ when $X\subseteq \opt$ contains the $1/\epsilon$ largest budget elements in \opt\ (or all of \opt\ if its cardinality is small). For this $X$, the budget $b_e$ required by any $e\in \opt\backslash X$ is strictly smaller than $\epsilon B$, otherwise $b(X)>B$.

Consider $\tau_i\in \left[(1-\epsilon)\frac{\opt}{2B},\frac{\opt}{2B}\right]$ and let $S_i$ denote the solution returned by the algorithm with $X$ and threshold $\tau_i$. We show that $f(S_i)\geq (0.5-\epsilon)\, \opt$. The analysis is split in two cases based on how the algorithm terminates.
\smallskip

\noindent \textbf{Case I:} Final Add is not invoked. 
In this case 
every element not in $S_i$ fails the threshold requirement and the algorithm does not remove elements after they are chosen. Similar to the analysis of Algorithm \ref{calg}, we have an interleaved partition 
\[\{O_1,\{s_1\},O_2,\{s_2\},\cdots,\{s_{k_i}\},O_{k_i+1}\},\]
of $\opt\cup S_i$ such that $s_j$ is the $j$th element added to $S_i$. Using Corollary \ref{interleaf}, we have
\[\opt \leq f(S_i) + \sum_{j=1}^{k_i+1} f\left(O_{j} \mid E(j-1)\right).\]
Now, $ f\left(O_{j} \mid E(j-1)\right)\leq \tau_i |O_j|,\, \forall j\in[k_i+1]$. Thus, $\alg\geq f(S_i)\geq  0.5\, \opt$.
\smallskip


\noindent \textbf{Case II:} Final Add is invoked for some element $j$. We claim that the set $S_i$ output by Final Add is such that (i) $B\geq b(S_i)\geq (1-\epsilon)\, B$ and (ii) $f(S_i)\geq \tau_i b(S_i)$. Using (i) and (ii), we have for $\epsilon\in[0,1]$,
\[f(S_i)\geq 0.5\,(1-\epsilon)^2\, \opt\geq \,(0.5-\epsilon)\, \opt.\]
It remains to show (i) and (ii). Let $S_{in}$ denote the set that is input to Final Add and let $X_i=\{e \mid b_e\geq \epsilon B, e\in S_{in}\}$. 
Observe that $X_i \subseteq X$, since every element outside $X$ with budget exceeding $\epsilon B$ is discarded in the beginning. Therefore, $b(X_i)\leq B$ and the set $S_i$ returned by Final Add is feasible (and contains $X_i$). To see the lower bound on $b(S_i)$, let $t$ denote the last element removed from $S_{in}$ by Final Add. We have, $b_t<\epsilon B$ and $b_t+b(S_i)>B$. Thus, $b(S_i)\geq B-\epsilon B$. 

To show (ii) we use Corollary \ref{forE}(ii). 
For simplicity, let $\{1,2,\cdots,s\}$ denote the elements of $S_i$ in submodular order. Recall that $S_i\subset S_{in}$. 
Using Corollary \ref{forE}(ii) with $A=S_{in}$ and sets $E_k=\{k\}$ for $k\in[s]$, we have
\[\sum_{k\in[s]} f(k\mid L_k)\leq f(S_i),\]
where $L_k$ denotes the set of all elements in $S_{in}$ chosen by the algorithm prior to $k$. From the threshold requirement, we have $f(k\mid L_k)\geq \tau_ib_k,\, \forall k\in[s]$. Thus, $f(S_i)\geq \tau_ib(S_i)$. 

In case of a noisy oracle, similar to previous analyses it can be verified that the approximation guarantee is reduced by a multiplicative factor $\left(1-O(\frac{n\delta}{1-\delta})\right)$. 
\hfill\Halmos \end{proof}

\subsection{Matroid Constraint}\label{sec:matr}

\begin{lemma}\label{matrbasic}
Given a matroid $\mathcal{M}$ and an independent set $A$, define $\mathcal{I}_A=\{B\mid A\cup B \in\mathcal{I},\, B\subseteq N\backslash A\}$. For every independent set $A'$ such that $r(A')=r(A\cup A')=r(A)$, we have $\mathcal{I}_A=\mathcal{I}_{A'}$. 
\end{lemma}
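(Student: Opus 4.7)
The hypothesis asserts that $A$ and $A'$ are both independent with $|A|=|A'|$ and $r(A\cup A')=|A|$, i.e.\ they have the same closure, so the claim is really the standard matroid-theoretic statement that the contractions $\mathcal{M}/A$ and $\mathcal{M}/A'$ coincide. By symmetry in $A,A'$ it suffices to prove $\mathcal{I}_A\subseteq \mathcal{I}_{A'}$. Fix an arbitrary $B\in\mathcal{I}_A$; I need to show both that $B\subseteq N\setminus A'$ and that $A'\cup B$ is independent.

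The first step is disjointness, $B\cap A'=\emptyset$. If some $e\in B\cap A'$ existed, then $e\in A'\setminus A$ (since $B\cap A=\emptyset$). Independence of $A\cup B$ would give $r(A\cup\{e\})=|A|+1$, but $A\cup\{e\}\subseteq A\cup A'$ has rank $|A|$ by hypothesis; contradiction. This immediately yields $B\subseteq N\setminus A'$.

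The second step is independence of $A'\cup B$, which I would prove by induction on $|B|$. The base case $|B|=0$ is immediate. For the inductive step, remove an arbitrary $e\in B$, let $B'=B\setminus\{e\}$, and note $B'\in\mathcal{I}_A$ since $A\cup B'\subseteq A\cup B$ is independent; by induction $A'\cup B'$ is independent. Since $|A'\cup B'|=|A|+|B|-1<|A|+|B|=|A\cup B|$, the matroid augmentation axiom produces $x\in(A\cup B)\setminus(A'\cup B')$ such that $A'\cup B'\cup\{x\}$ is independent. Using disjointness, the only possibilities are $x=e$ or $x\in A\setminus A'$.

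The main technical point, and the only place the full closure-equality hypothesis (rather than merely $|A|=|A'|$) is used, is ruling out $x\in A\setminus A'$. If such $x$ existed, then $A'\cup\{x\}$ would be an independent subset of $A\cup A'$ of size $|A|+1$, contradicting $r(A\cup A')=|A|$. Hence $x=e$, and $A'\cup B=A'\cup B'\cup\{e\}$ is independent, which shows $B\in \mathcal{I}_{A'}$ and completes the inclusion. The entire argument is a bookkeeping exercise on the rank axioms once the two key uses of $r(A\cup A')=|A|$ are correctly identified.
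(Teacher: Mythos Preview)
Your argument is correct, but it proceeds quite differently from the paper's. The paper dispatches the lemma in one stroke by applying submodularity of the rank function to the pair $A'\cup B$ and $A\cup A'$: from $r(A'\cup A\cup B)+r(A')\le r(A'\cup B)+r(A\cup A')$ and $r(A')=r(A\cup A')$ one gets $r(A'\cup B)\ge r(A'\cup A\cup B)\ge r(A\cup B)=|A|+|B|=|A'|+|B|$, which simultaneously forces $A'\cup B$ independent and $A'\cap B=\emptyset$. Your proof instead separates the two conclusions, establishing disjointness by a direct rank-contradiction argument, and then proving independence by induction on $|B|$ via the exchange axiom, carefully ruling out augmenting elements from $A\setminus A'$. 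Your route is longer but relies only on the independence-exchange axiom rather than on rank submodularity, so it is arguably more elementary; the paper's route is slicker because a single submodularity inequality does all the work at once.
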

\begin{proof}{Proof.}
Consider an arbitrary set $B\in \mathcal{I}_A$. It suffices to show that $B\in \mathcal{I}_{A'}$. There are many ways to show this, in the spirit of submodularity we will prove this by using the submodularity of rank function $r(\cdot)$. 

Note that $r(A\cup B)=|A|+|B|=|A'|+|B|$ and consider the sets $A' \cup B$ and $A\cup A'$. We have from submodularity,
\begin{eqnarray*}
r\left(A'\cup A \cup B\right) + r\left(A'\right)&\leq &r\left(A'\cup B\right) + r\left(A\cup A'\right),\\
r \left(A'\cup A\cup B\right) &\leq &r\left(A'\cup B\right).
\end{eqnarray*}
Therefore, $r(A'\cup B)=r(A'\cup A\cup B)$. Since $r(A' \cup B)\leq |A'|+|B| =r(A\cup B)$, we have that $A'\cup B$ is independent and $B\subseteq N\backslash A'$. 
\hfill\Halmos \end{proof}

\begin{lemma}\label{maximal}
At the end of every iteration, marked by the element $j\in[n]$ that was parsed, the set $S$ maintained by Algorithm \ref{malg} is maximally independent, i.e., $r(S)$ equals the rank of the set $[j]$ that contains all elements parsed so far. 
\end{lemma}
\begin{proof}{Proof.}
Note that the algorithm always maintains an independent set and the lemma is true after the first iteration. For the sake of contradiction, let $j+1$ be the first element in submodular order where the lemma is not true. Let $S_{j}$ denote the set at the end of iteration $j$ (after element $j$ is parsed). By definition of $j$, $S_j\cup\{j+1\}$ is not independent but $r\left( [j+1]\right) =r\left(S_j\right)+1$. Using the fact that $S_j$ is rank maximal subset of $r\left( [j]\right)$, there exists an independent set $S'_j\in[j]$ with the same rank as $S_j$ and such that $S'_j \cup\{j+1\}$ is independent. Applying Lemma \ref{matrbasic} with $A=S_j$ and $A'=S'_j$, we have $\{j+1\}\in\mathcal{I}_{S_j}$, a contradiction.
\hfill\Halmos \end{proof}
\begin{proof}{Proof of Theorem \ref{resmatr}.} Algorithm \ref{malg} parses the ground set once. When parsing an element, the algorithm makes at most $d$ queries, resulting in total $nd$ queries.

To show the performance guarantee. Let $S_j$ denote the set $S$ maintained in the algorithm at the beginning of iteration $j$. Similarly, let $R_j$ denote the set $R$ at the beginning of iteration $j$. Observe that $R_j$ is the set of all elements that were chosen and later swapped out, prior to iteration $j$. Therefore, the set $S_j\cup R_j$ grows monotonically and includes all elements selected by the algorithm prior to $j$. We use $S$ and $R$ to denote the final sets when the algorithm terminates. 
Let $\opt$ denote the optimal set (and value) and \alg\ denote the algorithm output (and value). By monotonicity, $f(\alg \cup \opt)\geq \opt$. 
We will ``essentially" show that $f(\opt \mid \alg)\leq 3\alg$. The main elements that contribute to this upper bound are,
\smallskip

\noindent \textbf{Swap operations:} If an element $j$ is taken out of the set at iteration $t$, then the elements in \opt\ parsed between $j$ and $t$, but never included, may have larger marginal value after the swap. We show that the resulting total increase in marginal value is upper bounded by \alg. 
\smallskip

\noindent \textbf{Rejection of elements:} Any element that is rejected by the algorithm on parsing (due to insufficient marginal value for swapping) may be an element of \opt. Let $j$ denote the index (in submodular order) of such an element. 
Since $j$ is rejected we have that $S_j\cup \{j\}$ contains a circuit $C_j$ and the marginal $f(j\mid S_j\cup R_j)$ is upper bounded by $v_i$ for all $i\in C_j$. 
We show that the total marginal value $\sum_{j\in \opt\backslash (S\cup R)} f(j\mid S_j\cup R_j)$ of elements in $\opt$ rejected by the algorithm is at most 2\alg.
\smallskip

W.l.o.g., we ignore all elements in $N\backslash (\opt\cup S\cup R)$. So let $\hat{N}=\opt\cup S\cup R$ denote our ground set and re-index elements in $\hat{N}$ from $1$ to $|\hat{N}|$ (maintaining submodular order). Similarly, we re-index the sets $S_j, R_j$ so that they continue to denote the set maintained by the algorithm when element $j\in \hat{N}$ is first parsed, for every $j\in\{1,2,\cdots,|\hat{N}|\}$. From monotonicity, $\opt\leq f(\hat{N})$.

Consider an interleaved partition $\{O_\ell,E_\ell\}$ of $\hat{N}$ such that $E(m)=\cup_{\ell\in[m]}E_\ell:= S\cup R$ and $O(1)=\cup_{\ell_\in[m]}O_\ell:= \opt\backslash (S\cup R)$. 
Using Corollary \ref{interleaf},
\begin{eqnarray*}
\opt&\leq &f\left(E(m)\right) +\sum_{\ell\in[m]} f\left(O_\ell \mid E(\ell-1)\right),\\
&\leq &f\left(E(m)\right) +\sum_{j\in \opt\backslash (S\cup R)} f\left(j \mid S_j\cup R_j\right),
\end{eqnarray*}
where the second inequality follows from weak submodular order. Next, we apply Corollary \ref{forE}(i) to upper bound $f(E(m))$ in terms of \alg. Consider an interleaved partition $\{\bar{O}_\ell,\bar{E}_\ell\}_{\ell\in[\bar{m}]}$ of $E(m)$ such that $\bar{E}(\bar{m})=S$ and $\bar{O}(1)=R$. We have,
\begin{eqnarray*}
f\left(E(m)\right) &\leq &\alg + \sum_{\ell\in[\bar{m}]} f\left(\bar{O}_\ell \mid \bar{E}(\ell-1) \cup \bar{O}(1)\backslash \bar{O}(\ell) \right),\\
&= &\alg+ \sum_{j\in R} f\left(j\mid S_j\cup R_j \right).
\end{eqnarray*}
We upper bound $\sum_{j\in R} f\left(j\mid S_j\cup R_j \right)$ by \alg\ and $\sum_{j\in \opt\backslash (S\cup R)} f\left(j \mid S_j\cup R_j\right)$ by $2\alg$. This proves the main claim.
\smallskip

\noindent \textbf{Part I:} $\sum_{j\in R} f\left(j\mid S_j\cup R_j \right)\leq  \alg$. 

\noindent Consider an element $j_1\in R$ that was added to $S_{j_1}$ without swapping out any element. Since $j_1\in R$, there exists an element $j_2$ that later replaced $j$. Inductively, for $t\geq 2$, let $j_t$ denote the $t$th element in the chain of swaps $j_1\to j_2\to\cdots\to j_{t}$. The chain terminates at an element in $S$ and we call this a \emph{swap chain}. From the swap criteria in Algorithm \ref{malg}, we have
\[\sum _{\tau\in[t-1]} f\left(j_{\tau}\mid S_{j_\tau}\cup R_{j_\tau}\right)\leq f\left(j_t\mid R_{j_t}\cup S_{j_t}\right).\] 
Every element in $R$ is part of a unique swap chain 
and each chain has a unique terminal element in $S$. Therefore,
\[\sum_{j\in R} f\left(j\mid S_j\cup R_j \right)\leq \sum_{i\in S} f\left(i\mid S_i\cup R_i \right). \]
Applying Corollary \ref{forE}(ii) with $A=S\cup R$ and sets $E_i=\{i\}$ for $i\in S$, we have
\begin{equation}\label{coroE}
\sum_{i\in S} f\left(i\mid S_i\cup R_i \right) \leq  \alg. 
\end{equation} 

Observe that given a noisy function oracle $\hat{f}$, we have,
\begin{eqnarray*}
\sum_{j\in R} f\left(j\mid S_j\cup R_j \right)&\leq &\sum_{j\in R} \hat{f} \left(j\mid S_j\cup R_j \right) +\frac{O(|R|\delta)}{1-\delta}f(\hat{N}),\\
&\leq &\sum_{i\in S} \hat{f}\left(i\mid S_i\cup R_i \right)+\frac{O(|R|\delta)}{1-\delta}f(\hat{N}),\\
&\leq &\sum_{i\in S} f\left(i\mid S_i\cup R_i \right)+\frac{O(n\delta)}{1-\delta}f(\hat{N}),
\end{eqnarray*}
here the first and third inequalities follow from the definition of noisy oracle and the second inequality follows from the swap criteria. Thus, $\sum_{j\in R} f\left(j\mid S_j\cup R_j \right)\leq \alg + O(\frac{n\delta}{1-\delta})f(\hat{N})$.
\smallskip

\noindent \textbf{Part II:} $ \sum_{j\in \opt\backslash (S\cup R)} f\left(j \mid S_j\cup R_j\right)\leq 2\, \alg$. 

\noindent From Lemma \ref{maximal} we have that, $r\left(\opt\right)\leq r\left(\alg\right)$. Suppose there exists an injection $\phi$ from elements in $\opt\backslash (S\cup R)$ to elements in $S$ such that
\[f\left(j \mid S_j\cup R_j\right) \leq 2 f\left(\phi(j) \mid S_{\phi(j)}\cup R_{\phi(j)} \right), \quad \forall j\in\opt\backslash (S\cup R).\] 
Summing up these inequalities and using \eqref{coroE}, we are done. It remains to show that $\phi$ exists. We do this via a graphical construction inspired by \cite{chekuristream}.

Consider a graph $G$ with vertices given by $\hat{N}$. In order to define the edges recall that every $j\in\opt\backslash (S\cup R)$ is rejected by the algorithm on parsing. Thus, we have a unique circuit $C_j$ where $C_j \backslash \{j\}\subseteq S_j$. 
For our first set of edges we make a directed edge from $j$ to every element in $C_j\backslash\{j\}$ and we do this for all $j\in\opt\backslash (S\cup R)$. Next, for every $j\in R$, let $C_j$ represent the chain that causes $j$ to be swapped out in the algorithm. We create a directed edge from $j$ to every element in $C_j\backslash\{j\}$. Graph $G$ has the following properties,
\begin{enumerate}[(a)]
\item The elements of  $\opt\backslash (S\cup R)$ are \emph{source} vertices with no incoming edges. Elements of $S$ are \emph{sinks} with no outgoing edges.
\item The neighbors of every node in $G$ form a circuit with the node.
\item Given arbitrary vertex $j\in\opt\backslash (S\cup R)$, for every $i$ reachable from $j$ we claim that 
\[f(j\mid S_j\cup R_j)\leq v_i,\]
where $v_i$ corresponds to the value defined in Algorithm \ref{malg}. For neighbors of $j$ the claim follows directly from the swap criterion. Also, for any two neighboring vertices $i',i'' \in S\cup R$, we have $v_{i'}\leq v_{i''}$. The general claim follows by using these inequalities for every edge on the path from $j$ to $i$. 
\end{enumerate}
Using (a) and (b) we apply Lemma \ref{repeat} to obtain an injection $\phi$ from $\opt\backslash (S\cup R)$ to $S$ such that for every $j\in\opt\backslash (S\cup R)$ there exists a path to $\phi(j)\in S$. Then, from (c) we have, $f(j\mid S_j\cup R_j)\leq v_{\phi(j)}$. Recall the notion of a swap chain defined in Part I and let $W(\phi(j))$ denote the set of all preceding elements of the swap chain that terminates at $\phi(j)$. By definition of $v_{\phi(j)}$ and the swap criterion, \[v_{\phi(j)}=f\left(\phi(j)\mid S_{\phi(j)}\cup R_{\phi(j)}\right)+\sum_{i\in W(\phi(j))}f\left(i\mid S_{i}\cup R_{i}\right) \leq 2f\left(\phi(j)\mid S_{\phi(j)}\cup R_{\phi(j)}\right).\]

Given a noisy function oracle $\hat{f}$, by the preceding argument there exists an injection $\phi$ from $\opt\backslash (S\cup R)$ to $S$ such that, $\hat{f}\left(j \mid S_j\cup R_j\right) \leq 2 \hat{f}\left(\phi(j) \mid S_{\phi(j)}\cup R_{\phi(j)} \right), \,\, \forall j\in\opt\backslash (S\cup R)$. Thus, $\sum_{j\in \opt\backslash (S\cup R)} f\left(j \mid S_j\cup R_j\right)\leq   2\,\alg +O(\frac{n\delta}{1-\delta})f(\hat{N})$. Overall, for a noisy oracle we have $\alg \geq (1-O(\frac{n\delta}{1-\delta}))\,0.25\, f(\hat{N}) \geq (1-O(\frac{n\delta}{1-\delta}))\,0.25\, \opt$.
\hfill\Halmos \end{proof}
\begin{lemma}[Lemma 30 in \cite{chekuristream}]\label{repeat}
Let $M = (N,\mathcal{I})$ be a matroid, and $G$ a directed acyclic graph over $N$ such
that for every non-sink vertex $e\in N$, the outgoing neighbors of $e$ form a circuit with $e$ (alternatively, $e$ is in the span of its neighbors). Let $I\in\mathcal{I}$ be an independent set such that no path in $G$ goes from one element in $I$ to another. Then there exists an injection from $I$ to sink vertices in $G$ such that each $e\in I$ maps into an element reachable from $e$.
\end{lemma}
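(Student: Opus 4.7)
The plan is to deduce this from Hall's marriage theorem applied to the bipartite reachability relation between $I$ and the set of sinks of $G$. For each $e\in I$ let $R(e)$ denote the sinks reachable from $e$ in $G$, and for $I'\subseteq I$ set $T(I')=\bigcup_{e\in I'}R(e)$ and let $V(I')$ denote the set of all vertices reachable from some element of $I'$ (including $I'$ itself). An injection $\phi$ from $I$ to the sinks of $G$ with $\phi(e)\in R(e)$ exists if and only if $|T(I')|\geq |I'|$ for every $I'\subseteq I$, so the task reduces to verifying this Hall-type condition.

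The heart of the argument is the claim that $I'\subseteq\mathrm{span}(T(I'))$ in the matroid $M$. I would prove this by showing, via reverse topological induction on $G$ restricted to $V(I')$, that every vertex $v\in V(I')$ lies in $\mathrm{span}(T(I'))$. Note first that $V(I')$ is closed under taking outgoing neighbors, since any out-neighbor of a reachable vertex is itself reachable. The base case covers vertices that are sinks of $G$, which lie in $T(I')$ by definition. For a non-sink $v\in V(I')$, all outgoing neighbors lie in $V(I')$ and strictly follow $v$ in the reverse topological order, so by the inductive hypothesis they lie in $\mathrm{span}(T(I'))$; since $v$ forms a circuit with its out-neighbors by hypothesis, $v$ lies in the span of those neighbors, and hence in $\mathrm{span}(T(I'))$ as well.

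Once the span claim is established, independence of $I'\subseteq I$ immediately yields $|I'|=r(I')\leq r(T(I'))\leq |T(I')|$, which is exactly Hall's condition, and the desired injection follows. I expect the main technical points to be (i) carefully setting up the reverse topological induction on $V(I')$ so that the closure of $V(I')$ under outgoing edges is explicit, and (ii) pinpointing where the ``no path between elements of $I$'' hypothesis is needed: it is not invoked in the inductive step itself, but it guarantees the Hall inequality cannot collapse in degenerate configurations where one element of $I$ reaches another (indeed, the matroid independence of $I$ together with the circuit hypothesis already rules out such configurations, so the no-path assumption functions as a convenient redundancy rather than a strict necessity).
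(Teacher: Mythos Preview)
The paper does not prove this lemma; it is quoted verbatim as Lemma~30 of \cite{chekuristream} and invoked without argument, so there is no in-paper proof to compare against.

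Your Hall's-theorem approach is correct and is the natural route. The reverse-topological induction showing that every vertex of $V(I')$ lies in $\mathrm{span}(T(I'))$ is sound: $V(I')$ is closed under outgoing edges by transitivity of reachability, the sinks of $G$ lying in $V(I')$ are exactly $T(I')$, and for a non-sink $v\in V(I')$ the circuit hypothesis places $v$ in the span of its out-neighbors, which by induction already lie in $\mathrm{span}(T(I'))$. Independence of $I'\subseteq I$ then yields $|I'|=r(I')\le r(T(I'))\le|T(I')|$, and Hall's theorem supplies the injection.

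One small correction to your closing parenthetical: the claim that matroid independence of $I$ together with the circuit hypothesis already forces the no-path condition is not true in general. If $e_1\in I$ is a non-sink whose out-neighbors together with $e_1$ form a circuit $C$, and $e_2\in I$ is one of those neighbors, then $\{e_1,e_2\}$ is a proper subset of $C$ and hence independent whenever $|C|\ge 3$; so an edge (and thus a path) from $e_1$ to $e_2$ between elements of an independent set is perfectly possible. You are, however, right that your argument never invokes the no-path hypothesis: the span-based Hall verification goes through regardless, so for this particular proof route the hypothesis is genuinely redundant.
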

\section{Upper Bound of 0.5}\label{sec:upb}
The following lemma is useful in verifying the (strong) submodular order property for a given function and order. We use it subsequently in the proof of Theorem \ref{impossible} and again in Appendix \ref{sec:proofassort}. 

\begin{lemma}\label{elementwise}
	A monotone subadditive function $f$ is $\pi$ (strongly) submodular ordered if for any two sets $B\subseteq A$ and an element $i$ to the right of $A$, we have 
	$f(i\mid A)\leq f(i\mid B)$.
\end{lemma}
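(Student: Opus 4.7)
The plan is to reduce the set-wise submodular order condition \eqref{def} to the element-wise hypothesis by induction on $|C|$. Concretely, fix $B\subseteq A$ and a set $C$ with $l_\pi(C)>r_\pi(A)$; we aim to show $f(C\mid A)\leq f(C\mid B)$. When $|C|=0$ both sides are $0$, and when $|C|=1$ the statement is exactly the hypothesis.

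For the inductive step, index the elements of $C$ in the order given by $\pi$ as $c_1,\dots,c_t$ (so $c_t=r_\pi(C)$), and set $C'=\{c_1,\dots,c_{t-1}\}$. Using the telescoping identity for the marginal function, I would write
\[
f(C\mid A)=f(C'\mid A)+f(c_t\mid A\cup C'),\qquad f(C\mid B)=f(C'\mid B)+f(c_t\mid B\cup C').
\]
For the first pair of terms, $C'$ still sits entirely to the right of $A$ (and hence of $B$), so the inductive hypothesis applied to $C'$ yields $f(C'\mid A)\leq f(C'\mid B)$.

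For the second pair, I would apply the element-wise hypothesis to the nested pair $B\cup C'\subseteq A\cup C'$ and the singleton $\{c_t\}$. The containment is immediate from $B\subseteq A$, and $c_t$ lies to the right of $A\cup C'$ because $c_t$ is the rightmost element of $C$ (so it is to the right of $C'$) and $C$ itself lies to the right of $A$. Hence $f(c_t\mid A\cup C')\leq f(c_t\mid B\cup C')$, and adding this to the inductive inequality completes the step.

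There is no real obstacle: the only thing to check carefully is that the hypotheses required by the element-wise condition (the nesting $B\cup C'\subseteq A\cup C'$ and the location of $c_t$ to the right of $A\cup C'$) are preserved under the peeling, and both follow directly from $B\subseteq A$ and the choice of $c_t$ as the rightmost element of $C$. No use of subadditivity or monotonicity is needed for this implication; those properties are merely part of the ambient assumption on $f$.
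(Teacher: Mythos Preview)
Your proof is correct and follows essentially the same approach as the paper: both peel off one element of $C$ at a time along the order $\pi$, apply the element-wise hypothesis to the nested pair $B\cup C_{i-1}\subseteq A\cup C_{i-1}$ with the next element $c_i$, and then telescope. The paper presents this as a single sum $f(C\mid A)=\sum_i f(c_i\mid A\cup C_{i-1})\leq \sum_i f(c_i\mid B\cup C_{i-1})=f(C\mid B)$ rather than an explicit induction, but the content is identical.
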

\begin{proof}{Proof.}
	Strong submodular order implies $f(i\mid A)\leq f(i\mid B)$ by definition. To see the reverse direction, consider an arbitrary set $A$, subset $B$ of $A$, and a set $C$ such that $l_{\pi}(C)>r_{\pi}(A)$. Let $C=\{c_1,\cdots,c_k\}$ represent the elements of $C$ in submodular order and define subsets $C_i=\{c_1,\cdots,c_i\}$, $\forall i\in[k-1]$ and $C_0=\{\emptyset\}$. We are given, 
	\[f(c_i\mid A\cup C_{i-1})\leq f(c_i\mid B\cup C_{i-1}), \quad \forall i\in[k].\]
	Therefore,
	\[f(C\mid A)=\sum_{i\in[k]} f(c_i\mid A\cup C_{i-1})\leq \sum_{i\in[k]} f(c_i\mid B\cup C_{i-1})=f(C\mid B),\]
	where we use telescoping marginal values to rewrite $f(C\mid A)$ and $f(C\mid B)$. 
	\hfill\Halmos \end{proof}
\medskip

\begin{proof}{Proof of Theorem \ref{impossible}.}
	At a high level, we construct a family of submodular order functions where the (unique) optimal solution is ``well-hidden", i.e., only a small subset of the optimal solution can be found by polynomial number of queries to the value oracle. By design, the value of every feasible solution that has small overlap with the optimal set is at most half the optimal value. In the following, we use non-negative integers $n_1,n_2,k_1,k_2,r$ and a real valued scalar $\alpha>0$. We specify the exact values of these parameters later on and note that the values will obey the following rules $n_1>k_1>n_2=k_2$, $\alpha=k_1/k_2$ and $r<\alpha k_2=k_1$.
	Let $N$ denote the ground set (size $n$) with partition $\{N_1,N_2\}$. Let $n_i$ denote the cardinality of $N_i$ for $i\in\{1,2\}$. 
	For sets $S\subseteq N_1$, we define $f(S)=\min\{|S|,2k_1\}$. Under this definition, we have a monotone submodular function on the ground set $N_1$. Notice that, for all sets $S\subseteq N_1$ with $|S|\leq 2k_1$, the function value equals $|S|$. Further, subsets of $N_1$ of the same size have identical value. 
	For sets $S\subseteq N_2$, we define $f(S)=\alpha|S|$. Recall that $n_2=k_2$ and by definition, $f(N_2)=\alpha k_2=k_1$. Our overall ground set is given by $N_1\cup N_2$ so it remains to define function value on sets that intersect both $N_1$ and $N_2$.
	
	For a special set $A_1\subseteq N_1$ with $|A_1|=k_1$, we define 
	\[f(A_1\cup S_2):=k_1 + \alpha |S_2|-r\frac{|S_2|}{k_2}\quad \forall S_2\subseteq N_2.\]
	Recall that $r< \alpha k_2=k_1$ thus, $f(A_1\cup S_2)>\max\{f(A_1),f(S_2)\}$. More generally, for $B_1\subseteq A_1$ and $S_2\subseteq N_2$, 
	\[f(S_2\mid B_1):=f(S_2)\left(1-\frac{\min\{r,f(B_1)\}}{k_1}\right)= |S_2|\left(\alpha-\frac{\min\{r,|B_1|\}}{k_2}\right).\] 
	Next, for arbitrary sets $S_i\subseteq N_i$, 
	\begin{eqnarray}\label{gendef}
		f(S_2\mid S_1)&:= &f(S_2)\left(1-\min\left\{1,\frac{|S_1\backslash A_1|+\min\{r,|S_1\cap A_1|\}}{k_1}\right\} \right).
	\end{eqnarray}
	Observe that for $|S_1\cap A_1|\leq r$, this simplifies to,
	\[f(S_2\mid S_1)= f(S_2)\left(1-\min\left\{1,\frac{|S_1|}{k_1}\right\} \right).\]
	By definition, we have that for every element $e\in A_1$, the marginal $f(e\mid N_2)=0$. However, given a set $B_1\subset A_1$ with $|B_1|>r$ and $e\in A_1\backslash B_1$, we have $f(e\mid B_1\cup N_2)=1$, Therefore, $f$ is not a submodular function. We claim that it is monotone subadditive and has a strong submodular order. 
	
	To see monotonicity, consider function value in the form $f(S_1)+f(S_2\mid S_1)$. If an element is added to $S_2$, this sum does not decrease. Now, if $|S_1|\geq 2k_1$, then $f(S_2|S_1)=0$ and we also have monotonicity w.r.t.\ elements added to $S_1$. Finally, when $|S_1|<2k_1$, adding an element to $S_1$ increases $f(S_1)$ by 1 and can decrease $f(S_2\mid S_1)$ by at most $\frac{f(S_2)}{\alpha k_2}\leq 1$. To see subadditivity, observe that $f(S_2)\geq f(S_2\mid S_1)$. 
	
	Fix some order $\pi$ where $N_1$ is entirely to the left of $N_2$. We show that $\pi$ is a strong submodular order for $f$. Consider sets $B\subseteq A$ and an element $i$ to the right of $A$. Using Lemma \ref{elementwise}, it suffices to show that $f(i\mid A)\leq f(i\mid B)$. We establish this in cases. 
	\smallskip
	
	\noindent \textbf{Case I}: $A\cup \{i\}\subseteq N_j$ for $j\in\{1,2\}$. Note that $A\subseteq N_j$ implies $B\subseteq N_j$. The inequality follows from the 
	fact that $f$ is monotone and submodular when restricted entirely to either $N_1$ or $N_2$. 
	\smallskip
	
	\noindent \textbf{Case II}: $A\subseteq N_1$ and $i\in N_2$. Since $B\subseteq A$, we have, $|B\backslash A_1|\leq |A\backslash A_1|$ and $|B\cap A_1|\leq |A\cap A_1|$. Then from \eqref{gendef}, we have that $f(i|B)\leq f(i|A)$ by definition.
	
	\smallskip
	
	\noindent \textbf{Case III}: $A$ intersects both $N_1$ and $N_2$. Since $i$ is to the right of $A$ in order $\pi$, we have that $i\in N_2$. In this case, we claim that, 
	\begin{equation}\label{single}
		f(i\mid B+e)\leq f(i\mid B),\quad \forall i\in N_2, e\not\in B+i.
	\end{equation} Applying \eqref{single} repeatedly gives us the desired. To see \eqref{single}, consider two cases. First, suppose $e\in N_2$. In this case, observe that the submodularity of $f$ on $N_2$ gives us the desired. In the second case, $e\in N_1$, and from \eqref{gendef}, we see that the marginal value of $i$ does not increase. This establishes the submodular order property for $f$. 
	
	
	
	Now, consider the problem of maximizing $f$ subject to cardinality constraint $k:= k_1+k_2$. Observe that the set $A_1\cup N_2$ is the unique optimum with value $2k_1-r$. Further, every feasible set with at most $r$ elements from $A_1$, has value at most $k$. Choosing $k_1=n_1^{0.5-\delta}$ (i.e., $o(\sqrt{n_1})$), and $n_2=k_2=r=k_1^{1-\delta}$ (i.e., $o(k_1)$) for some small $\delta>0$, we have that as $k_1\to +\infty$, 
	\begin{equation*}
		k=k_1+k_2=k_1+o(k_1) <(0.5+\epsilon)\, \opt,
	\end{equation*}
	for any constant $\epsilon>0$. Therefore, an algorithm has approximation guarantee strictly greater than $0.5$ only if it outputs a set with more than $r$ elements from $A_1$. We generate $A_1$ by sampling $k_1$ distinct elements from $N_1$ uniformly at random and show that the random set $A_1$ is ``well hidden" from polynomial query algorithms. Formally, we show that when $A_1$ is generated uniformly randomly, any given polynomial query algorithm will fail to find a feasible set with more than $r$ elements of $A_1$ with probability approaching 1 in the limit of $k_1\to \infty$. This proves the main claim. 
	
	W.l.o.g., we consider algorithms that query only marginal values $f(X\mid Y)$ for sets $X,Y\subseteq N_i$ for $i\in\{1,2\}$. A query in any other form can be converted to a constant number of such queries. We call a set $S$ \emph{good}, if 
	\[S\subseteq N_1,\,\, |S|<2k_1, \text{ and  }|S\cap A_1|>r.\]
	If at least one of the sets $X$ or $Y$ is good, we call $f(X|Y)$ a \emph{good} query. All other queries are called \emph{bad}. To show that $A_1$ is well hidden, we show that, (i) 
	Unless a query $f(X|Y)$ is good, we do not get any useful information about finding a good set. (ii) A polynomial (query) algorithm fails to make a good query with probability approaching 1 (as $k_1\to +\infty$). Combining (i) and (ii), we have that a polynomial algorithm will fail to find any good set with probability approaching 1. 
	
	To show (i), notice that queries with $X\cup Y\subseteq N_i$ give no information (on finding a good set). Thus, it suffices to consider queries with $X\subseteq N_i$ and $Y\subseteq N_{-i}$, where $\{i,-i\}=\{1,2\}$. We consider all possible bad queries through the following cases.
	\smallskip
	
	\noindent \textbf{Case I:} $X\subseteq N_1$, $Y \subseteq N_2$, and $|X|\geq 2k_1$. In this case, we call $X$ a \emph{large} set. 
	Observe that for any given $Y\subseteq N_2$, we have $f(X\cup Y)=2k_1$ for all large sets $X$. 
	In this case the marginal value $f(X\mid Y)$ is identical for every $X$ and we obtain no useful information. 
	\smallskip
	
	\noindent \textbf{Case II:} $Y\subseteq N_1$, $X\subseteq N_2$, and $|Y|\geq 2k_1$. In this case, observe that $f(X\mid Y)=0$ for every $X\subseteq N_2$ and we obtain no useful information. 
	\smallskip
	
	\noindent \textbf{Case III:} $|X\cap A_1|\leq r$ and $|Y\cap A_1|\leq r$. 
	First, for $Y\subseteq N_2$, notice that every set $X\subseteq N_1$ with $|X\cap A_1|\leq r$ has marginal value $f(X\mid Y)=f(X)+f(Y\mid X)-f(Y)$, which only depends on the cardinality of $X$ and $Y$. Similarly, given $Y\subseteq N_1$ with $|Y\cap A_1|\leq r$, every set  $X\subseteq N_2$ of a given cardinality has the same marginal value $f(X\mid Y)$. Thus, we find no useful information on good sets. 
	
	Therefore, given a collection of bad queries $\{f(X_i\mid Y_i)\}$, 
	we do not obtain any useful information on finding a good set. 
	Since a query where at least one of $X$ or $Y$ is a large set gives no useful information on good sets, to show (ii), it suffices to consider algorithms that only make queries $f(X|Y)$ such that, $\max\{|X|,|Y|\}<2k_1$. We call a set with less than $2k_1$ elements \emph{small}.  
	
	First, consider the special case of deterministic algorithms. 
	Consider an arbitrary \emph{small} set $Z\subseteq N_1$. 
	Since $A_1$ is selected uniformly randomly, using a standard concentration bounds for negatively correlated random variables we have that the probability $P[|Z\cap A_1|>r]$, is exponentially small in $n_1$ 
	(see Lemma \ref{chernoff} for a proof). 
	Given polynomially many small sets $\{Z_i\}_{i\in[n']}$, 
	using the union bound 
	we have an exponentially small probability of the event that $|Z_i\cap A_1|>r$ for some $i\in[n']$. Thus, a deterministic algorithm with total number of queries polynomial in $n_1$ will, with probability arbitrarily close to 1, fail to make a good query. 
	Given a randomized algorithm, we condition on its random seed (that is independent of randomness in our instance), to obtain a deterministic algorithm. For a polynomial query algorithm, the conditionally deterministic algorithm must make polynomial number of queries. Thus, it fails to make a good query with probability 1 (asmptotically). 
	Unconditioning on the random seed then gives us the desired bound for every (randomized) polynomial query algorithm. 
	\hfill\Halmos \end{proof}
\begin{lemma}\label{chernoff}
	Consider a ground set $N$ of $n$ elements. Let $A$ be a subset of $k=\floor{n^{0.5-\delta}}$ elements chosen uniformly at random from $N$ for some small $\delta>0$. Then, for any $S$ with cardinality at most $2k$ and $r=k^{1-\delta}$, we have
	\[P\left[|S\cap A|>r\right]
	<e^{-\Theta(n^{1-\delta})}.\]
\end{lemma}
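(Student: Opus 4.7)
The plan is to recognize $|S\cap A|$ as a hypergeometric random variable (it counts how many of the $k$ uniformly sampled elements in $A$ land in the fixed set $S$) and then invoke a Chernoff-type concentration inequality in its large-deviation regime. Concretely, I would write $|S\cap A|=\sum_{i\in S} X_i$ with $X_i=\mathbbm{1}[i\in A]$. Although the indicators $X_i$ are not independent, they are \emph{negatively correlated}: because $A$ is drawn uniformly without replacement, $P[X_i=X_j=1]=k(k-1)/(n(n-1))<(k/n)^2$ for $i\neq j$. A standard transfer principle (e.g.\ Panconesi--Srinivasan) guarantees that the Chernoff moment-generating function bound still holds for sums of such negatively correlated indicators, so we may bound the upper tail as if the $X_i$ were independent Bernoulli$(k/n)$.

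Next, I would estimate the mean and check that we are deep in the large-deviation regime. The expectation is $\mu:=\mathbb{E}[|S\cap A|]=|S|\,k/n\leq 2k^2/n$, and with $k=\floor{n^{0.5-\delta}}$ this gives $\mu\leq 2n^{-2\delta}$, which vanishes with $n$; meanwhile the threshold $r=k^{1-\delta}=n^{(0.5-\delta)(1-\delta)}$ is polynomially large in $n$. So $r\gg\mu$ and the large-deviation form of Chernoff applies:
\[P\bigl[|S\cap A|\geq r\bigr]\leq \left(\frac{e\mu}{r}\right)^{r}\leq \left(\frac{2e\,k^{1+\delta}}{n}\right)^{r}.\]
Taking logarithms, the dominant term in the exponent is $-\Theta(r\log n)$, which is super-polynomial in $n$ and therefore certainly of the stated form. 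This is exactly what is needed to survive the polynomial-size union bound in the proof of Theorem~\ref{impossible}.

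If one prefers a fully self-contained derivation that avoids citing a negative-correlation Chernoff bound, the same estimate follows from a direct union bound over all $r$-subsets of $S$: $P[|S\cap A|\geq r]\leq \binom{|S|}{r}\binom{n-r}{k-r}/\binom{n}{k}\leq \binom{2k}{r}(k/n)^{r}\leq (2ek/r)^{r}(k/n)^{r}=(2ek^{1+\delta}/n)^{r}$, which matches the Chernoff estimate term for term. The main (and only) obstacle is choosing the right tail inequality and correctly tracking which parameter dominates the exponent; once that is pinned down, the calculation is a routine substitution, and any bound of the form $e^{-n^{\Omega(1)}}$ suffices for the application to Theorem~\ref{impossible}.
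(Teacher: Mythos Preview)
Your approach is essentially the same as the paper's: both define indicators $X_i=\mathbbm{1}[i\in A]$ for $i\in S$, verify negative correlation (the paper does this by a short induction on the size of the subset), and invoke the Panconesi--Srinivasan generalized Chernoff bound. Your self-contained alternative via a union bound over all $r$-subsets of $S$ is a clean addition the paper does not give; it bypasses the negative-correlation machinery entirely and lands on the same estimate $\bigl(2ek^{1+\delta}/n\bigr)^{r}$.

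One quibble on the exponent. Your bound yields an exponent of order $r\log n\approx n^{(0.5-\delta)(1-\delta)}\log n$, which is \emph{not} $\Theta(n^{1-\delta})$ as the lemma states. (The paper substitutes $\epsilon=O(n^{0.5+0.5\delta})$ into the form $e^{-\Theta(\epsilon^2)/n^{2\delta}}$, but the quadratic $\epsilon^2\mu$ Chernoff tail is valid only for $\epsilon\leq 1$; for $\epsilon\gg 1$ the correct exponent is $\Theta(\epsilon\mu\log\epsilon)=\Theta(r\log n)$, which matches yours.) So neither derivation literally attains the stated exponent. As you correctly observe, any bound of the shape $e^{-n^{\Omega(1)}}$ suffices to beat the polynomial union bound in Theorem~\ref{impossible}, so the discrepancy is harmless for the application---just do not assert that your exponent is ``of the stated form,'' since it is weaker (though sufficient).
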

\begin{proof}{Proof.}
	Let $S=\{1,2,\cdots,s\}$, where $s\leq 2k$. For every $i\in[s]$, define indicator random variable $X_i$ that is 1 if $i\in A$ and 0 otherwise. Note that $P[X_i=1]=k/n$ for every $i\in[s]$. We claim that these random variables are negatively correlated such that for every $\hat{S}\subseteq S$,
	\[P\left[ \bigwedge_{j\in \hat{S}} X_j=1 \right]\leq \prod_{j\in \hat{S}} P\left[X_j=1\right]. \]
	We prove this by induction. The claim is true for singleton sets $\hat{S}$. We assume the claim holds for all sets $\hat{S}$ with cardinality $\hat{k}$ or less. Consider a set $\hat{S}+e$ of size $\hat{k}+1$ with $e\in S\backslash \hat{S}$. We have,
	\[P\left[X_e=1 \, \Big|\, \bigwedge_{j\in \hat{S}} X_j=1 \right]= \frac{k-\hat{k}}{n-\hat{k}} \leq P[X_e=1],\]
	where the inequality follows from $k<n$. Now, a direct application of the generalized Chernoff bound given in Theorem 3.4 in \cite{panconesi1997randomized}, gives us
	\[P\left[|S\cap A|>(1+\epsilon)2n^{-2\delta} \right]
	<e^{-\frac{\Theta(\epsilon^2)}{n^{2\delta}}},\]
	for $\epsilon>0$. Substituting $\epsilon=O(n^{0.5+0.5\delta})$ completes the proof.
	\hfill\Halmos \end{proof}

\section{Conclusion}\label{sec:conclusion}
We introduced notions of weak and strong submodular order functions and explored the landscape of constrained maximization problems under this structure. We showed that in the value oracle model, no polynomial algorithm can achieve a guarantee better than $0.5$ for maximizing function value subject to a cardinality constraint. We proposed algorithms that achieve this best possible guarantee under cardinality and more generally, budget constraint. We also gave a 0.25 approximation for the problem under matroid constraint.
Applying these results, we obtained improved (and first constant factor) guarantees, with unified algorithms, for several constrained assortment optimization problems. 
We also observed an intriguing algorithmic connection of submodular order maximization with the problem of streaming submodular optimization. The connection raises some interesting questions about the interplay between structure of set functions and computational considerations such as limited memory. 
\medskip

\noindent \textbf{Directions for Further Work and Beyond Submodular Order} 
\begin{itemize}
\item What is the best possible approximation guarantee under matroid constraint? 
\item  \textbf{Multiple submodular orders:}  Consider an order $\pi$ and its reverse order $\pi_r$. Does a function with submodular order along both $\pi$ and $\pi_r$ admit better approximations? Note that the upper bound of $0.5$ in Theorem \ref{impossible} applies to functions with multiple submodular orders but not to functions where an order and its reverse are both submodular orders.  
\item \textbf{Unknown submodular order:} Given a function $f$ and the knowledge that it has a submodular order, can we get interesting guarantees for optimization without knowing the order? Are there natural conditions under which the order could be computed using only value queries? What is the computational complexity of verifying that a given order is a submodular order or certifying the inexistence of (weak) submodular order? Note that testability problems are hard and not completely understood even for classic submodular functions \cite{seshadhri2014submodularity}.

Recall that knowing the submodular order is not necessary for obtaining algorithmic guarantees. In particular, our algorithm for Markov choice model develops a partial (submodular) order by repeatedly solving an unconstrained version of the problem on smaller and smaller ground sets. Generalizing the scope of such an idea idea is another interesting direction for future work. Recall that the impossibility result given in Theorem \ref{mor} only applies to functions that do not have any (strong or weak) submodular order.  
\item Are there other applications where objective is not submodular but has submodular order?
\end{itemize}


\section*{Acknowledgements}

The author thanks Ali Aouad, Omar El-Housni, Vineet Goyal, and Danny Segev for helpful discussions and comments on early drafts of this work. 

{\small
\bibliographystyle{informs2014.bst}
\bibliography{assortbib}}
\begin{APPENDICES}
{\color{black}	\section{Performance of Multi-element Extensions of Classic Algorithms}\label{appx:multigrd}
\textbf{Greedy Algorithm:}	Consider the greedy algorithm that iteratively builds a solution of cardinality at most $k$ by adding a set of up to $m$ elements in each iteration, i.e., given current set $S$ the algorithm will add set
\[X=\argmax_{A\subseteq N\backslash S, |A|\leq \min\{k-|S|, m\}} f(A\cup S).\]
For $m=1$, this is the standard greedy algorithm. The algorithm is efficient (makes polynomially many queries to oracle for $f$) as long as $m$ is some constant (independent of $k,n$).

\emph{Generalization of Example 1:} Consider a ground set $\{1,\cdots,2k+m\}$. Let singleton values $f(\{e\})$ equal $1$ for all \emph{good} elements $e\in\{1,\cdots,k\}$, equal $\epsilon$ for all \emph{poor} elements $e\in\{k+1,\cdots,2k\}$ and finally, let $f(\{e\})=1+\epsilon$ for all elements $e\in\{2k+1,\cdots, 2k+m\}$. Let the first $2k$ elements be modular, i.e., the value of a set $S=S_1\cup S_2$ with subset $S_1$ of good elements and $S_2$ of poor elements is simply $|S_1|+\epsilon|S_2|$. Finally, let $f(S_1\cup S_2\cup\{2k+1,\cdots, 2k+m\})=\max\{|S_1|,m(1+\epsilon)\}+\epsilon|S_2|$ for every subset $S_1$ of good elements and $S_2$ of poor elements. It can be verified that this function is monotone subadditive and the natural indexing $\{1,\cdots,2k+m\}$ is a submodular order. Notice that the set of all good elements has value $k$ and this is the optimal set of cardinality $k$ for every $k\geq 2$. However, the greedy algorithm would first pick the set $\{2k+1,\cdots, 2k+m\}$ and will subsequently pick $k-m$ poor elements, resulting in a total value of $m+\epsilon\, k$. For $\epsilon\to 0$ and $k,n\to+\infty$, this is only a trivial $1/k$ approximation of the optimal value.

\textbf{Local Search:} In the example above, performing local search by swapping up to $m$ elements at a time (with the objective of improving function value) will not find any improvement on the set $\{k+m,\cdots,2k+m\}$. This set is a local maxima with value $1/k$ of the optimal. 

}

\section{Proof of Main Results for Assortment Optimization}\label{sec:proofassort}

In this section, we establish the results for assortment optimization discussed in Section \ref{sec:asstresult}. Recall that to obtain monotonicity we transform the revenue objective $R_{\phi}(S)$ in assortment optimization to $f_{\phi}(S)=\max_{X\subseteq S} R_{\phi}(S)$. We start by proving the subadditivity of this function for substitutable choice models. Recall that a choice model is 
substitutable if $\phi(i,S)\geq \phi\left(i,S\cup\{j\}\right),\, \forall i\in S, j\not\in S$. 
Recall that both MNL and Markov models are substitutable \citep{blanchet2016markov}. 

\begin{lemma}\label{asst:subs}
For any substitutable choice model $\phi$, the function $f_{\phi}$ is monotone subadditive.
\end{lemma}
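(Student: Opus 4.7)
\textbf{Proof Proposal for Lemma \ref{asst:subs}.}
Monotonicity is essentially immediate from the definition. For any $B \subseteq A$, every subset $X \subseteq B$ is also a subset of $A$, so the maximum defining $f_\phi(A)$ is taken over a superset of the feasible region for $f_\phi(B)$; hence $f_\phi(A) \geq f_\phi(B)$. This requires no assumption on $\phi$ beyond well-definedness of $R_\phi$.

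The substantive work is subadditivity. Given arbitrary $A, B \subseteq N$, I would fix an optimal sub-assortment $X^\star \subseteq A \cup B$ achieving $f_\phi(A \cup B) = R_\phi(X^\star)$, and partition it as $X^\star_A = X^\star \cap A$ and $X^\star_B = X^\star \setminus A \subseteq B$, so that $X^\star_A \cup X^\star_B = X^\star$ is a disjoint union. The goal is to show
\[
R_\phi(X^\star) \;\leq\; R_\phi(X^\star_A) + R_\phi(X^\star_B),
\]
after which monotonicity of $f_\phi$ and the inclusions $X^\star_A \subseteq A$, $X^\star_B \subseteq B$ give $f_\phi(A \cup B) \leq f_\phi(A) + f_\phi(B)$.

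The decomposition step is where substitutability enters. For each $i \in X^\star_A$, the set $X^\star$ is obtained from $X^\star_A$ by adding the elements of $X^\star_B$ one at a time; iterating the substitutability inequality $\phi(i, S \cup \{j\}) \leq \phi(i, S)$ along this sequence of one-element additions yields $\phi(i, X^\star) \leq \phi(i, X^\star_A)$. Symmetrically, $\phi(i, X^\star) \leq \phi(i, X^\star_B)$ for every $i \in X^\star_B$. Splitting the defining sum
\[
R_\phi(X^\star) \;=\; \sum_{i \in X^\star_A} r_i \, \phi(i, X^\star) \;+\; \sum_{i \in X^\star_B} r_i \, \phi(i, X^\star)
\]
and applying these two pointwise bounds (using $r_i \geq 0$) yields exactly $R_\phi(X^\star_A) + R_\phi(X^\star_B)$, which completes the argument.

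I do not expect a real obstacle here; the plan is essentially a direct unwinding of the definitions. The one place where some care is needed is justifying the iterated use of substitutability along the chain $X^\star_A \subseteq X^\star_A \cup \{j_1\} \subseteq \cdots \subseteq X^\star$, so I would spell this step out explicitly rather than invoking a ``generalized'' substitutability that the paper has not defined. Nonnegativity of revenues $r_i$ is also used implicitly in converting the probability bounds into revenue bounds, and I would note this once.
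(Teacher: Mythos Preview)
Your proposal is correct and follows essentially the same approach as the paper: take an optimal sub-assortment of $A\cup B$, split its revenue sum into two pieces indexed by $A$ and $B$, and apply substitutability termwise to bound each piece by the revenue of the corresponding smaller assortment. The only cosmetic difference is that you use the disjoint partition $X^\star\cap A$ and $X^\star\setminus A$, whereas the paper uses the (possibly overlapping) pair $S_1\cap Y$ and $S_2\cap Y$; your choice makes the initial equality in the revenue decomposition exact rather than an inequality, but the argument is otherwise identical.
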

\begin{proof}{Proof.}
Monotonicity follows by definition. To see subadditivity, consider any two sets $S_1$ and $S_2$ and let $Y= \argmax_{X\subseteq S_1\cup S_2} R_\phi(X)$. Using substitutability,
\begin{eqnarray*}
f_\phi(S_1\cup S_2)=R_\phi(Y)&& = \sum_{i\in S_1\cap Y}r_i\phi(i,Y) + \sum_{i\in S_2\cap Y}r_i\phi(i,Y) \\
&& \leq \sum_{i\in S_1\cap Y}r_i\phi(i,S_1\cap Y) + \sum_{i\in S_2\cap Y}r_i\phi(i,S_2\cap Y)\\
&&\leq f_{\phi}(S_1)+f_\phi(S_2).
\end{eqnarray*}
\hfill\Halmos \end{proof}
Next, we prove the submodular order property for MNL choice models and obtain new results for the joint customization and assortment optimization problem. 
\subsection{New Results in the MNL Model}\label{sec:asstmnl}
We start by stating a well known property of MNL models. A proof is provided for completeness.

\begin{lemma}\label{mnlprop}
Given an MNL model $\phi$, an assortment $S$, and an element $i\not\in S$, we have $R_{\phi}(i\mid S)\geq 0\,  \Leftrightarrow\,  r_i\geq R_\phi(S)$. Further, if $r_i\geq R_{\phi}(S)$ then $r_i\geq R_\phi(S\cup\{i\})$.
\end{lemma} 
\begin{proof}{Proof.} We drop $\phi$ from notation for convenience. Let $v(S)=v_0+\sum_{j\in S} v_j$. The following chain proves the lemma,
\begin{eqnarray*}
R(i\mid S)= \frac{r_iv_i+R(S)v(S)}{v_i+ v(S)} - R(S)\,=\, \frac{r_iv_i-R(S)v_i}{v_i+ v(S)}\, =\, v_i\frac{\left(r_i-R(S)\right)}{v_i+ v(S)}\,\leq\, r_i-R(S).
\end{eqnarray*}
\hfill\Halmos \end{proof}


\begin{lemma}\label{mnlorder}
The order $\pi$ given by sorting products in descending order of price $r_i$, breaking ties arbitrarily, is a strong submodular order under MNL choice.
\end{lemma}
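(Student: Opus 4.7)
By Lemma \ref{elementwise}, it suffices to show that $f_\phi(i \mid A) \leq f_\phi(i \mid B)$ for any $B \subseteq A$ and any $i$ to the right of $A$ in $\pi$, i.e., $r_i \leq r_j$ for all $j \in A$. My plan is to use the Talluri--van Ryzin characterization of optimal unconstrained MNL assortments (revenue-ordered structure) to rewrite $f_\phi(i \mid S)$ as a simple expression in $R_\phi$, and then verify the inequality by a direct calculation with the MNL formula.

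First, I will argue that when $i$ is to the right of $S$,
\[ f_\phi(S \cup \{i\}) = \max\{f_\phi(S),\, R_\phi(S \cup \{i\})\}. \]
Every optimal MNL subassortment is a revenue-ordered prefix, and since $i$ has the smallest revenue in $S \cup \{i\}$, the only such prefix containing $i$ is $S \cup \{i\}$ itself; all others are prefixes of $S$, yielding value at most $f_\phi(S)$. Moreover $R_\phi(S \cup \{i\})$ is a weighted average of $R_\phi(S)$ and $r_i$, hence lies between them, so $R_\phi(S \cup \{i\}) \leq \max\{f_\phi(S), r_i\}$. Consequently $f_\phi(i \mid S) = \max\{0, R_\phi(S \cup \{i\}) - f_\phi(S)\}$, and this is positive only when $r_i > f_\phi(S)$.

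Now I split on whether $r_i \leq f_\phi(A)$. If $r_i \leq f_\phi(A)$, then $f_\phi(i \mid A) = 0 \leq f_\phi(i \mid B)$ and we are done. Otherwise $r_i > f_\phi(A) \geq f_\phi(B)$, and the key observation is that every element of $A$ has revenue at least $r_i > f_\phi(A)$. By the Talluri--van Ryzin threshold condition ($r_j \leq f_\phi(S)$ for any $j$ not in the unconstrained optimum of $S$), this forces the optimum over $A$ to equal $A$ itself, and similarly for $B$. Hence $f_\phi(A) = R_\phi(A)$, $f_\phi(B) = R_\phi(B)$, and it suffices to show $R_\phi(i \mid A) \leq R_\phi(i \mid B)$.

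For this final step I will use the closed form
\[ R_\phi(i \mid S) = \frac{v_i \bigl(r_i - R_\phi(S)\bigr)}{V(S) + v_i}, \qquad V(S) := v_0 + \sum_{j \in S} v_j. \]
I first establish $R_\phi(A) \geq R_\phi(B)$: since every item in $A \setminus B$ has revenue at least $r_i > R_\phi(B)$, writing $R_\phi(A)$ as a weighted average of $R_\phi(B)$ and the average revenue over $A \setminus B$ gives $R_\phi(A) \geq R_\phi(B)$. Combined with $V(A) \geq V(B)$, this simultaneously shrinks the numerator and enlarges the denominator of $R_\phi(i \mid A)$ relative to $R_\phi(i \mid B)$, yielding the desired inequality. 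The only nontrivial step is the reduction $f_\phi(S) = R_\phi(S)$ in the regime $r_i > f_\phi(S)$ via the revenue-ordered structure; once that collapse is in hand, the remaining algebra is routine weighted-average manipulation.
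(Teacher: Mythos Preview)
Your proof is correct and follows essentially the same approach as the paper: reduce via Lemma~\ref{elementwise}, split on whether $r_i \leq f_\phi(A)$, and in the nontrivial case use the MNL structural property (you via Talluri--van Ryzin, the paper via its Lemma~\ref{mnlprop}) to collapse $f_\phi$ to $R_\phi$ on $A$, $B$, $A+i$, $B+i$, then finish with the closed-form comparison of $R_\phi(i\mid A)$ and $R_\phi(i\mid B)$. The only cosmetic difference is that the paper gets $R_\phi(A)\geq R_\phi(B)$ immediately from monotonicity of $f_\phi$ once $f_\phi=R_\phi$ is established, whereas you rederive it via a weighted-average argument.
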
 
\begin{proof}{Proof.}
We omit reference to choice model $\phi$ from the subscript. Index elements in $N$ in descending order of prices $r_1\geq r_2 \cdots \geq r_n$, breaking ties arbitrarily. Call this ordering $\pi$ and consider two 
sets $A,B$ with $B\subseteq A$.  
Using Lemma \ref{elementwise}, it suffices to consider an arbitrary element $i$ to the right of $A$ and show $f(i\mid A)\leq f(i\mid B)$.

First, consider the case $r_i<f(A)$. From Lemma \ref{mnlprop} we have $f(i\mid A)=0$ and non-negativity of $f$ gives us the desired. Now, let $r_i\geq f(A)$. Since $r_i$ is the lowest price element in $A+i$, repeatedly applying (both parts of) Lemma \ref{mnlprop} we have that $f(A)=R(A)$ and $f(A+i)=R(A+i)$, i.e., the optimal unconstrained assortment on sets $A$ and $A+i$ includes all elements. 
Similarly, using $f(B)\leq f(A)$ we have, $f(B)=R(B)$ and $f(B+i)=R(B+i)$. 
Thus, it only remains to show that $R(i\mid A)\leq R(i\mid B)$. Letting $v(S)=v_0+\sum_{j\in S} v_i$, we have
\begin{eqnarray*}
R(i\mid A) 
=\frac{v_i\left(r_i-R(A)\right)}{v_i+ v(A)}\, \leq\, 
\frac{v_i\left(r_i-R(B)\right)}{v_i+ v(B)}=R(i\mid B).
\end{eqnarray*} 
Notice that the argument does not work if $r_i$ is not the smallest revenue element in $A$. So,  in general, orders other than non-increasing price order are not submodular orders for MNL.  
\hfill\Halmos \end{proof}
\smallskip

\noindent \emph{Remark:} 
\cite{aouad2018greedy} introduce and show a \emph{restricted} submodularity for MNL model. This notion is equivalent to submodularity enforced over ``small" sets. Formally, $f(C\mid A)\leq f(C\mid B)$ for all sets $B\subseteq A$ and $|A\cup C|\leq s$, for some $s\geq2$. This weakening of submodularity is quite distinct from submodular order property. 

Now, recall that in the joint customization and assortment optimization problem we seek a selection $S$ of at most $k$ products to maximize,
\[\sum_{j\in[m]}\alpha_j \max_{X\subseteq S} R_{\phi_j}(X)\]
Next, we prove our main result (Theorem \ref{mnlresult}) for this problem. 
\begin{proof}{Proof of Theorem \ref{mnlresult}.}
Using the definition of $f_{\phi_j}$ we reformulate the problem as, \[\max_{S,|S|\leq k}\sum_{j\in[m]}\alpha_j f_{\phi_j}(S).\] 
Using Lemma \ref{asst:subs} and Lemma \ref{mnlorder}, every $f_{\phi_j}$ is monotone subadditive and has (strong) submodular order in the direction of descending prices. From Lemma \ref{closed}, the function $F:=\sum_{j\in[m]} \alpha_j f_{\phi_i}(S)$ is also monotone subadditive and has (strong) submodular order in the direction of descending product prices. Further, the value oracle for $F$ can be implemented efficiently with runtime linear in $m$. Thus, a straightforward application of results constrained submodular order maximization gives us the desired (including generalizations to budget and matroid constraint).
\hfill\Halmos \end{proof}

\subsubsection{Joint Customization, Pricing, and Optimization.}\label{sec:asstmnlpricing}
Consider a setting where the prices of products are not fixed and we can customize both the assortment and product prices for each customer type. Let $P=\{p_1,\cdots,p_r\}$ denote the of possible product prices. Given an assortment $S$ and price vector $\mb{r}_S=(r_i)_{i\in S}\in P^{S}$, the probability that type $j$ customer chooses item $i$ is specified by an MNL model $\phi_j(i,S,\mb{r})$. The joint optimization problem is stated as follows,
\begin{equation}\label{customprice}
\max_{S, |S|\leq k}\sum_{j\in[m]}\left( \alpha_j \,\max_{X\subseteq S,\mb{r}_S} \sum_{i\in S}r_i \phi_j(i,S,\mb{r})\right).
\end{equation}
\begin{corollary}\label{mnlcorollar}
The joint customization, pricing, and assortment optimization problem under MNL choice has a 0.25 approximation. 
\end{corollary}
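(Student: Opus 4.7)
The plan is to cast problem~\eqref{customprice} as maximization of a monotone subadditive function with a submodular order subject to a matroid constraint, and then invoke Theorem~\ref{resmatr}.

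Concretely, I would lift the ground set from $N$ to $\bar{N} := N \times P$, treating each pair $(i,p)$ as a ``virtual product'' offered at price $p$. For each customer type $j$, the choice model over these virtual products is itself an MNL with parameters $v_{i,p,j}$ and revenues $p$, inherited from $\phi_j$. Hence, by the argument underlying Theorem~\ref{mnlresult}, the lifted per-type function
$$\tilde{f}_j(S') := \max_{X' \subseteq S'}\, R_{\tilde{\phi}_j}(X')$$
is monotone, subadditive, and admits the strong submodular order obtained by sorting $\bar{N}$ in decreasing order of price. Lemma~\ref{closed} then shows that the aggregate $\tilde{f}(S') := \sum_{j \in [m]} \alpha_j\, \tilde{f}_j(S')$ inherits all three properties under the same order.

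Next, I would encode the outer selection (at most $k$ products, with a price chosen for each) by the matroid $\mathcal{M}$ on $\bar{N}$ whose independent sets contain at most one element from each part $N_i := \{i\} \times P$ and have total cardinality at most $k$; this is the $k$-truncation of a partition matroid with $n$ parts of size $r$, and hence a matroid of rank $d = k$. A feasible $S' \in \mathcal{M}$ corresponds exactly to the outer decisions of problem~\eqref{customprice}, while the inner per-customer maximization over $X \subseteq S$ is absorbed into the definition of $\tilde{f}$. Applying Theorem~\ref{resmatr} to $\tilde{f}$ on $\mathcal{M}$ then yields the claimed $0.25$-approximation; since every value query to $\tilde{f}$ reduces to $m$ unconstrained MNL assortment problems, each solvable exactly in polynomial time by~\cite{talluri}, the overall algorithm is polynomial-time and incurs no oracle-approximation loss.

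The main obstacle is simply checking that the submodular-order argument from Theorem~\ref{mnlresult}, originally developed for MNL on real products sorted by price, transfers verbatim to the lifted ground set $\bar{N}$, where two distinct virtual products may share an underlying real product. This transfer is direct because $\tilde{\phi}_j$ is literally an MNL over the distinct virtual items in $\bar{N}$ and the decreasing-price order on $\bar{N}$ coincides with the one used in that proof; the remaining checks (monotonicity and subadditivity of $\tilde{f}_j$, and the matroid axioms for $\mathcal{M}$) are routine.
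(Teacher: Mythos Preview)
Your reduction fixes a single price per item in the outer selection: the matroid $\mathcal M$ on $\bar N=N\times P$ admits at most one pair $(i,p)$ for each $i$, so once $S'\in\mathcal M$ is chosen every customer type sees item $i$ only at that one price. But in problem~\eqref{customprice} the maximization over the price vector $\mb r_S$ sits \emph{inside} the sum over $j$, meaning each customer type may be shown the same item at its own price. Your lifted problem is therefore strictly more restrictive; for instance with $m=2$, $k=1$, a single item and two prices where the two types prefer different prices, your formulation forces a common price and loses a constant factor against the true optimum. A $0.25$-approximation for your lifted problem does not yield a $0.25$-approximation for~\eqref{customprice}. Put differently, your $\tilde f$ absorbs the inner $\max_{X\subseteq S}$ but not the inner $\max_{\mb r_S}$, and the statement ``a feasible $S'\in\mathcal M$ corresponds exactly to the outer decisions of problem~\eqref{customprice}'' is where the argument breaks: the outer decision in~\eqref{customprice} is only $S\subseteq N$, with no price attached.

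The paper's reduction avoids this by expanding the ground set further to $N_{m,P}=\{(j,i,p_r):j\in[m],\,i\in N,\,p_r\in P\}$ and defining $\hat\phi_j$ to place zero MNL weight on every tuple whose first coordinate differs from $j$. Each type then has its own private copies of the virtual products, so per-type pricing is captured inside the inner unconstrained maximization; the matroid then imposes the cardinality bound $k$ together with at most one price per $(j,i)$ pair. Your observations about the decreasing-price submodular order, Lemma~\ref{closed}, and the appeal to Theorem~\ref{resmatr} are all fine and carry over unchanged once you work on $N_{m,P}$ rather than $N\times P$.
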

\begin{proof}{Proof.}
We prove the corollary by reducing the problem to an instance of matroid constrained joint assortment optimization and customization problem (with fixed prices). For every type $j\in[m]$, recall that the MNL model $\phi_j$ is specified by parameters $v_{j,i,p_r}$ for every $i\in N,p_r\in P$. Consider the expanded ground set $N_{m,P}=\{(j,i,p_r)\mid j\in[m],i\in N,p_r\in P\}$. For each type $j\in[m]$, define MNL model $\hat{\phi}_j$ on ground set $N_{m,P}$ such that $\hat{v}_{j',i,p_r}=0$ for every $j'\neq j$ and $\hat{v}_{j,i,p_r}=v_{j,i,p_r}$. 
Consider the joint customization and assortment optimization problem (with fixed prices) on ground set $N_{m,P}$ and choice models $\hat{\phi}_j$ for $j\in[m]$, subject to a matroid constraint that ensures that: (i) we pick at most $k$ products and (ii) for every $j\in[m]$ and $i\in[N]$, we pick at most one product from the set $\{(j,i,p_r)\mid p_r\in P\}$. It can be verified that this is an equivalent reformulation of the original pricing problem problem. 
\hfill\Halmos \end{proof}

\subsection{New Results for Markov Choice and Beyond}\label{asst:markov}

%

\subsubsection{Proof of Theorem \ref{compatible}.}\label{sec:proofofcompat}
\begin{proof}{} Theorem 9 has two main claims. The first claim is that when $\phi$ is compatible Algorithm \ref{framework} has the same guarantee as the underlying algorithm $\mathcal{A}$. The second claim is that the Markov choice model is compatible. We prove these claims separately. In Part A, we show approximation guarantees for Algorithm \ref{framework} under the assumption of a compatible model. Part B, which can be read independently, establishes the compatibility of Markov model. 

\subsubsection*{A - Analyzing Algorithm \ref{framework}.} 
At a high level, the analysis has two parts. In Part A-I, we introduce the notion of piece-wise submodular order and show that this notion is sufficient to generalize the guarantees obtained for submodular order functions. In Part A-II, we show that compatible choice models exhibit the required notion of piece-wise submodularity.
\smallskip

\noindent {\textbf{Part A-I}:} To introduce the notion of a piece-wise submodular order, consider an algorithm $\mathcal{A}$ and fix a parameter setting $\gamma\in \Gamma_{\mathcal{A}}$. Suppose that the \textbf{while} loop in Algorithm \ref{framework} runs in $p$ phases, each defined by a call to $U_{\phi}$. With $M,\hat{N}$ as defined in the algorithm, consider the beginning of phase $i\in[p]$ and define, 
\[\text{(Beginning of phase $i$) }\qquad N_i:=\hat{N}\backslash M.\]
Note that $N_i$ contains all elements of $\hat{N}$ that have not been passed to $\mathcal{A}_{\gamma}$ prior to phase $i$. At the end of phase $i$, let $M_i$ be the set of new elements picked by $\mathcal{A}_{\gamma}$, i.e.,  
\[\text{(End of phase $i$) }\qquad M_i=M\cap N_i.\]
At the end of the last phase, let $N_{p+1}$ denote the set of elements that were never passed to $\mathcal{A}_{\gamma}$. Let $M_{p+1}=\emptyset$. Notice that $\{N_i\}_{i\in[p+1]}$ is a partition of the original ground set $N$. Let $\pi$ denote an order such that
\[r_{\pi}(N_i)<l_{\pi}(N_{i+1}),\quad \forall i\in[p]. \] 

\noindent \textbf{Proper sets:} 
We say that set $A$ is $k$-\emph{proper} if $A\cap N_i\subseteq M_i$ for $i\leq k-1$ and $A\cap N_i=\emptyset$ for all $i>k$. Using this definition, every subset of $N_k$ is $k$-proper and given a $k$-proper set $A$, all subsets of $A$ (including the empty set) are also $k$-proper. 
Finally, every subset of $\cup_{i\in[p]} M_i$ is $p+1$ proper. We use the term proper set to refer to a set that is $k$-proper for some $k\in[p+1]$.
\smallskip

\noindent \textbf{Piece-wise submodular order:} 
Order $\pi$ is a $\{N_i,M_i\}_{i\in[p+1]}$ piece submodular order if for every 
\emph{proper} set $A$, set $B\subseteq A$, and $C$ to the right of $A$, we have $f(C\mid A)\leq f(C\mid B)$.

Note that for $p=0$ we recover the notion of submodular order. While this notion is weaker than submodular order, we have the following crucial upper bounds in the same vein as Corollary \eqref{interleaf} and Corollary \eqref{forE}. 

\begin{lemma}\label{interleafextend}
Consider a monotone subadditive function $f$ on ground set $N$ and $\{N_i,M_i\}_{i\in[p+1]}$ piece submodular order $\pi$. For every set $A$ that has an interleaved partitioned $\{O_\ell,E_\ell\}_{\ell\in[m]}$ such that $E(m) \subseteq \cup_{i\in[p+1]} M_i$, we have
\[f(A)\leq f(E(m)) +  \sum_{\ell} f(O_\ell\mid E(\ell-1)). \]
\end{lemma}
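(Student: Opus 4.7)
The plan is to reduce Lemma \ref{interleafextend} to the proof of Corollary \ref{interleaf} (via Lemma \ref{mainleaf} with $\sigma$ the identity permutation), by refining the given interleaved partition so that the piece-wise submodular order property can be invoked. The core obstacle is that, while $E(m)\subseteq \cup_i M_i$ makes every subset of $E(m)$ proper, a single $O_\ell$ may span several phases $N_k$, so the set $E(\ell-1)\cup O_\ell$ that appears in the proof of Lemma \ref{mainleaf} need not be proper, and the piece-wise property fails to apply to it directly.

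To bypass this, I would refine the partition as follows. For each $\ell$, write $O_\ell^{(k)}=O_\ell\cap N_k$, let $k_1<\cdots<k_j$ be the phases intersecting $O_\ell$, and replace $O_\ell$ by the block $O_\ell^{(k_1)},\emptyset,O_\ell^{(k_2)},\emptyset,\ldots,O_\ell^{(k_j)}$, where each inserted $\emptyset$ is a dummy empty $E$-set (empty sets are already permitted in interleaved partitions elsewhere in the paper). Call the resulting partition $\{O'_{\ell'},E'_{\ell'}\}_{\ell'\in[m']}$. By construction, $E'(m')=E(m)\subseteq \cup_i M_i$ and each nonempty $O'_{\ell'}$ is contained in a single phase $N_k$. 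The key verification is that $A':=E'(\ell'-1)\cup O'_{\ell'}$ is now proper: since $E'(\ell'-1)\subseteq \cup_i M_i$, we have $A'\cap N_i\subseteq M_i$ for all $i<k$; and because $\pi$ places all of $N_i$ to the left of $N_{i+1}$ and $E'(\ell'-1)$ lies to the left of $O'_{\ell'}\subseteq N_k$, it holds that $E'(\ell'-1)\cap N_i=\emptyset$ for $i>k$, so $A'\cap N_i=\emptyset$ for $i>k$. Thus $A'$ is $k$-proper.

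With $A'$ proper, piece-wise submodular order yields $f(C'\mid A')\leq f(C'\mid B')$ for $B'=E'(\ell'-1)\subseteq A'$ and $C':=O'(\ell'+1)\cup (E'(m')\backslash E'(\ell'-1))$ (which sits entirely to the right of $A'$). Running the telescoping argument behind Lemma \ref{mainleaf} with the identity permutation on the refined partition then produces $f(A)\leq f(E'(m'))+\sum_{\ell'} f(O'_{\ell'}\mid E'(\ell'-1))$. Finally, since only empty $E$-sets were inserted within each original $O_\ell$, the marginals associated to the pieces $O_\ell^{(k_t)}$ telescope: $\sum_t f(O_\ell^{(k_t)}\mid E(\ell-1)\cup \bigcup_{s<t} O_\ell^{(k_s)})=f(O_\ell\mid E(\ell-1))$ by definition of conditional marginals, and $E'(m')=E(m)$. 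Summing over $\ell$ gives the claimed bound $f(A)\leq f(E(m))+\sum_\ell f(O_\ell\mid E(\ell-1))$. The main subtlety is the properness check in the refinement: it hinges crucially on the block structure $r_\pi(N_i)<l_\pi(N_{i+1})$ of the order $\pi$, which forces $E'(\ell'-1)$ to respect the $N_k$ boundary dictated by the current $O'_{\ell'}$.
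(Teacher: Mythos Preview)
Your refinement is well motivated, but the final telescoping step does not go through. After running the Lemma~\ref{mainleaf} telescoping on the refined partition you obtain
\[
f(A)\;\leq\; f(E'(m'))+\sum_{\ell'} f\bigl(O'_{\ell'}\mid E'(\ell'-1)\bigr),
\]
and for a piece $O'_{\ell'}=O_\ell^{(k_t)}$ of some original $O_\ell$ the conditioning set is $E'(\ell'-1)=E(\ell-1)$, because the only $E'$-sets you inserted between pieces of $O_\ell$ are empty. The earlier pieces $O_\ell^{(k_s)}$ with $s<t$ are $O'$-sets in the refined partition, not $E'$-sets, so they never enter the conditioning. Hence the contribution of the pieces of $O_\ell$ is $\sum_t f\bigl(O_\ell^{(k_t)}\mid E(\ell-1)\bigr)$, \emph{not} $\sum_t f\bigl(O_\ell^{(k_t)}\mid E(\ell-1)\cup\bigcup_{s<t}O_\ell^{(k_s)}\bigr)$. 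The identity you wrote is a correct telescoping of $f(O_\ell\mid E(\ell-1))$, but it is not what your bound produces. To close the gap you would need $\sum_t f\bigl(O_\ell^{(k_t)}\mid E(\ell-1)\bigr)\leq f\bigl(O_\ell\mid E(\ell-1)\bigr)$, which is a \emph{super}additivity of marginals and is implied neither by subadditivity nor by the piece-wise submodular order property.

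The paper's proof is more direct: it does not refine the partition at all and simply asserts that $E(\ell-1)\cup O_\ell$ is a proper set, after which the telescoping of Lemma~\ref{mainleaf} carries over verbatim. This properness is immediate whenever each $O_\ell$ lies in a single phase $N_k$: then $E(\ell-1)\cup O_\ell$ is $k$-proper because $E(\ell-1)\subseteq\cup_i M_i$ contributes only $M_i$-elements to phases $i<k$ and, being to the left of $O_\ell$, nothing to phases $i>k$. Your concern that $O_\ell$ might straddle several phases is legitimate for the general statement, but in every invocation of this lemma in the paper (see Lemma~\ref{pieceextend}) the $O_\ell$ are singletons, so the issue does not arise and the direct argument suffices.
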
 

\begin{lemma}\label{forEextend}
Consider a monotone subadditive function $f$ on ground set $N$ and $\{N_i,M_i\}_{i\in[p+1]}$ piece submodular order $\pi$. We have that $f$ is $\pi$-submodular ordered on the restricted ground set $\cup_{i\in[p+1]} M_i$. 
Thus, given a set $A\subseteq \cup_{i\in[p+1]} M_i$, with interleaved partitioned $\{O_\ell,E_\ell\}_{\ell\in[m]}$ and sets $L_\ell=\{e\in A\mid \pi(e)<l_{\pi}(E_\ell)\},\, \forall \ell\in[m]$, we have 
\begin{enumerate}[(i)]
\item $f(A)\leq f\left(E(m)\right) + \sum_{\ell\in[m]} f\left(O_\ell \mid E(\ell-1)\cup O(1)\backslash O(\ell) \right),$
\item $\sum_{\ell\in[m]} f\left(E_\ell \mid L_{\ell}  \right) \leq f\left(E(m) \right).$ 
\end{enumerate}
\end{lemma}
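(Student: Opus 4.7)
The plan is to reduce the lemma to a direct invocation of Corollary \ref{forE} once the main claim — that $f$ restricted to $\cup_{i\in[p+1]} M_i$ is $\pi$-submodular ordered — has been established. The key observation I would use is that since $M_{p+1}=\emptyset$, the restricted ground set equals $\cup_{i\in[p]} M_i$, and by definition every subset $A$ of this union is automatically $(p+1)$-proper: the condition $A\cap N_i\subseteq M_i$ is trivially satisfied for all $i\in[p]$, and $A\cap N_j=\emptyset$ holds vacuously for all $j>p+1$.

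To prove the main claim I would fix arbitrary $B\subseteq A\subseteq \cup_{i\in[p+1]} M_i$ and any $C$ to the right of $A$ with $C\subseteq \cup_{i\in[p+1]} M_i$. Since $A$ is $(p+1)$-proper by the observation above, the defining inequality of the $\{N_i,M_i\}_{i\in[p+1]}$ piece submodular order of $\pi$ applies and yields $f(C\mid A)\leq f(C\mid B)$. This is exactly the (strong, and therefore also weak) submodular order property for $f$ on the restricted ground set, which is what the main claim asserts.

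With the restricted submodular order property in hand, parts (i) and (ii) are immediate consequences of Corollary \ref{forE} applied to $f$ on the restricted ground set $\cup_{i\in[p+1]} M_i$: the hypotheses of the corollary — monotonicity, subadditivity, and a weak submodular order along $\pi$ — are all satisfied for the restriction, and the interleaved partition $\{O_\ell,E_\ell\}_{\ell\in[m]}$ and sets $L_\ell$ are defined exactly as the corollary requires.

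I do not anticipate a significant obstacle here, since the conceptual work was front-loaded into the definition of piece-wise submodular order and into Corollary \ref{forE} itself. The only point I would verify carefully is that the definition of piece-wise submodular order places no extra condition on the set $B$ or on $C$ beyond $B\subseteq A$ and $C$ being to the right of $A$; granted this, the passage from "piece-wise submodular order on $N$" to "ordinary submodular order on $\cup_{i\in[p+1]} M_i$" is purely a matter of unwinding definitions.
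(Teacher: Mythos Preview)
Your proposal is correct and follows essentially the same approach as the paper: observe that every subset of $\cup_{i\in[p+1]} M_i$ is a proper set (indeed $(p+1)$-proper, using $M_{p+1}=\emptyset$), then invoke the piece-wise submodular order inequality to conclude that $f$ is $\pi$-submodular ordered on the restricted ground set, from which (i) and (ii) follow directly by Corollary~\ref{forE}. The paper's proof is just a terser version of exactly this argument.
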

Appendix \ref{appx:interextend} gives a proof of Lemma \ref{interleafextend}. Lemma \ref{forEextend} is proved in Appendix \ref{appx:forEextend}. Interestingly, (the more general) Lemma \ref{mainleaf} is not valid for piece-wise submodular order. Nonetheless, we recover all the guarantees for submodular order functions as the analysis of \emph{every} algorithm $\mathcal{A}$ in this paper 
relies only on Corollary \eqref{interleaf} and Corollary \eqref{forE} (for sets $E(m)$), in addition to monotonicity and subadditivity. The following lemma formalizes this observation. See Appendix \ref{appx:pieceextend} for a proof. 

\begin{lemma}\label{pieceextend}
Consider an instance of constrained assortment optimization for a compatible choice model $\phi$. Suppose we execute Algorithm \ref{framework} with $\mathcal{A}$ given by an appropriate algorithm out of Algorithms \ref{calg}, \ref{bcalg}, \ref{5balg} and \ref{malg}. If for every parameter setting $\gamma\in \Gamma_\mathcal{A}$, the order $\pi$ and sets $\{N_i,M_i\}_{i\in[p+1]}$ generated by Algorithm \ref{framework} are such that $\pi$ is a $\{N_i,M_i\}_{i\in[p+1]}$ piece submodular order then, Algorithm \ref{framework} retains the guarantee of $\mathcal{A}$ for submodular order maximization.
\end{lemma}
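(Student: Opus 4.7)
The plan is to revisit the analyses of Theorems \ref{rescard}, \ref{resbudget}, and \ref{resmatr} and verify that every step remains valid when the underlying submodular order on $\hat{N}$ is replaced by the $\{N_i, M_i\}_{i\in[p+1]}$ piece submodular order $\pi$, provided that the conditioning sets arising in those analyses are all subsets of $\bigcup_{i\in[p+1]} M_i$ and hence proper. The only two properties of $f_\phi$ not supplied by piece-wise submodular order that are used in the original analyses are monotonicity and subadditivity, both of which are guaranteed by Lemma \ref{asst:subs}. Every remaining structural inequality is an invocation of Corollary \ref{interleaf} or Corollary \ref{forE}, and the task is to replace each such invocation by Lemma \ref{interleafextend} or Lemma \ref{forEextend}.

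First I would enumerate the invocations for each algorithm. In Theorem \ref{rescard} (Algorithm \ref{calg}), Corollary \ref{interleaf} is applied to $\opt \cup S$ with $E_j = \{s_j\}$, so $E(m) = S$. In Theorem \ref{resbudget}(i) (Algorithm \ref{bcalg}), the same style is used with a trailing set $E_{k_i+1}$ that the algorithm could not add due to the budget, handled separately against the best singleton. In Theorem \ref{resbudget}(ii) (Algorithm \ref{5balg}), Corollary \ref{interleaf} is used with $E(m) \subseteq S_{in}$, and Corollary \ref{forE}(ii) is applied to $A = S_{in}$ with singleton $E$-sets on $S_i$; here $S_{in}$ is the set just before Final Add trims it and consists entirely of elements chosen by $\mathcal{A}$. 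In Theorem \ref{resmatr} (Algorithm \ref{malg}), Corollary \ref{interleaf} is invoked with $E(m) = S \cup R$, Corollary \ref{forE}(i) with $A = S \cup R$ and $\bar E(\bar m) = S$, and Corollary \ref{forE}(ii) with $A = S \cup R$ and singleton $E$-sets on $S$. In every case the relevant $E(m)$ or $A$ is contained in $S \cup R$, which is exactly the collection of elements selected by $\mathcal{A}_\gamma$ at some point during the final iteration of the \textbf{while} loop, hence a subset of $\bigcup_{i\in[p+1]} M_i$ and therefore proper.

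The second step is to substitute Lemma \ref{interleafextend} for Corollary \ref{interleaf} and Lemma \ref{forEextend} for Corollary \ref{forE} in each invocation; the top-level inequalities produced are identical. The secondary element-wise decompositions of the form $f(O_\ell \mid E(\ell-1)) \leq \sum_{e \in O_\ell} f(e \mid E(\ell-1))$, which the original proofs derive directly from the (weak) submodular order property, are recovered by a second application of Lemma \ref{interleafextend} to $O_\ell \cup E(\ell-1)$ with a refined interleaved partition that places $E(\ell-1)$ first and each $o \in O_\ell$ as a singleton $O$-set with empty $E$-sets in between. The required properness condition reduces to $E'(m') = E(\ell-1) \subseteq \bigcup_{i\in[p+1]} M_i$, which again holds. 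The threshold checks, budget accounting, and swap criteria that are internal to $\mathcal{A}$ only compare marginal values of the form $f(e \mid S')$ where $S'$ is a subset of the algorithm's current state, and therefore depend neither on the submodular order property nor on its piece-wise counterpart.

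The main obstacle will be Algorithm \ref{malg}, whose analysis relies on the swap-chain accounting of Part I, the graphical construction yielding the injection $\phi : \opt \setminus (S \cup R) \to S$, and the per-edge inequality $f(j \mid S_j \cup R_j) \leq v_i$ for every $i$ reachable from $j$ (property (c) in the proof of Theorem \ref{resmatr}). These three ingredients are algorithmic rather than structural: they follow directly from the swap rule and the bookkeeping of the values $v_j$ inside Algorithm \ref{malg}, and use neither submodular order nor piece-wise submodular order. Combined with Lemma \ref{forEextend}(ii) applied to $A = S \cup R$, this yields Part I; Part II then follows exactly as in the original analysis. Assembling the resulting guarantees across the four algorithms delivers the claim.
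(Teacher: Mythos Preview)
Your proposal is correct and takes essentially the same approach as the paper's proof in Appendix~\ref{appx:pieceextend}: both revisit each algorithm's analysis, observe that every invocation of Corollary~\ref{interleaf} (respectively Corollary~\ref{forE}) has its $E(m)$ (respectively its ambient set $A$) contained in $S\cup R\subseteq \bigcup_{i\in[p+1]} M_i$, and then substitute Lemma~\ref{interleafextend} (respectively Lemma~\ref{forEextend}). The only cosmetic difference is that the paper refines the interleaved partitions to have singleton $O$-sets from the outset, whereas you obtain the element-wise bound $f(O_\ell\mid E(\ell-1))\leq \sum_{e\in O_\ell} f(e\mid E(\ell-1))$ via a second application of Lemma~\ref{interleafextend}; the two devices are equivalent.
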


This concludes the first part of the analysis. We have shown that if $f_{\phi}$ satisfies a more relaxed notion of piece-wise submodular order then, Algorithm \ref{framework} has the same guarantee as the underlying algorithm $\mathcal{A}$ (when $\mathcal{A}$ is used for functions with submodular order). 
\medskip

\noindent \textbf{Part A-II}: In this part we show that compatibility implies the piece-wise submodular order property. Consider a compatible choice model with optimal unconstrained assortment $S$. Recall that a choice model is compatible if, 
\begin{eqnarray*}
&R(X\mid Z)\geq 0  
\qquad\qquad &\forall X\subseteq S,\, Z\subseteq N\qquad\qquad \eqref{prop2}\\
&R(Z\mid X)\leq R(Z\mid Y)\qquad\qquad &\forall Y\subseteq X\subseteq S,\, \, Z\subseteq N\qquad \eqref{prop1}
\end{eqnarray*}
Consider sets $A,B,$ and $C$ such that $A$ is a $k+1$-proper set for some $k\geq0$, $B\subseteq A$, and $C$ is to the right of $A$ in order $\pi$. We need to show that, $f(C\mid A)\leq f(C\mid B)$. First, using properties \eqref{prop2} and \eqref{prop1}, we show that,
\begin{enumerate}[(i)]
\item $f(A\cup C)=\max_{X\subseteq C} R(A\cup X)$.
\item $R(C\mid A)\leq R(C\mid B)$.
\end{enumerate}   

To prove (i) and (ii) for $k+1$ proper sets, define $N^-_k=\cup_{i\leq k}N_i$, $N^+_k=N\backslash N^-_k$, and $M^-_k=\cup_{i\leq k} M_i$. We claim that the assortment $N_{k+1}\cup M^-_k$ is an optimal unconstrained assortment on ground set $N^+_k\cup M^-_k$. Since any $k+1$ proper set is a subset of this assortment, using properties \eqref{prop2} and \eqref{prop1} of compatible choice models we have (i) and (ii) as desired. We prove $N_{k+1}\cup M^-_k$ is optimal on ground set $N^+_k\cup M^-_k$ by induction. For $k=0$, $M^-_0=\emptyset$ and $N_1$ is an optimal assortment on $N$ by definition. Suppose the claim holds for $k\in\{0,\cdots,h\}$. 
Then, $S:=N_{h+1}\cup M^-_{h}$ is an optimal assortment on ground set $N^+_{h}\cup M^-_{h}$. Using \eqref{prop2} with $A:=M^-_{h+1}\subseteq S$ and $C:=N^+_{h+1}\cup M^-_{h+1}$, we have that the assortment $N_{h+2}\cup M^-_{h+1}$ is optimal on ground set $N^+_{h+1}\cup M^-_{h+1}$. This completes the induction and the proof of (i) and (ii).

%

The main claim now follows from (i) and (ii). From (i) we have, $f(C\mid A)=\max_{X\subseteq C} R(X\mid A)$ and $f(C\mid B)=\max_{X\subseteq C} R(X\mid B)$ (since $B\subseteq A$ is also a proper set). From (ii) we get, $R(X\mid A)\leq R(X\mid B)$ for every $X$ to the right of $A$. To complete the proof we note that $\max_{X\subseteq C} R(X\mid A)\leq \max_{X\subseteq C} R(X\mid B)$.

\subsubsection*{B - Proof of Compatibility of Markov Model.} 
We omit $\phi$ from subscript for convenience. Let us with some necessary notation and properties of Markov model.
Let $P(i\prec Y)$ denote the probability that $i$ is visited before any element in $Y\cup\{0\}\backslash\{i\}$ in the markov chain, here 0 is the outside option. Let $P_j(i\prec Y)$ denote the probability of $i$ being visited before $Y\cup\{0\}\backslash\{i\}$ when the traversal starts at $j$. 
\cite{desir} introduced an important notion of externality-adjustment, where given disjoint sets $X$ and $Y$ they define,
\[R^X(Y)=R(X\cup Y)-R(X).\]
In the following, we summarize properties of this adjustment and some useful lemmas shown in \cite{desir}:
\begin{enumerate}[(i)]
\item  \textbf{$X$-adjusted markov chain:} $R^X(Y)$ ($=R(Y\mid X)$) is the revenue of set $Y$ in a markov chain where the \emph{reduced} price of every $i\in Y$ is,
\[r^X_i= r_i -\sum_{j\in X}P_{i}(j\prec X) r_j, \]
and reduced prices $r^X_j=0$ for every $j\in X$. Transition probabilities are adjusted so that $\rho^X_{j0}=1$ for $j\in X$ (Lemma 4 and Figure 1 in \cite{desir}). Therefore,  \[R^X(i)=R(i\mid X)=P(i\prec X) r^X_i.\]  
\item \textbf{Composition of adjustments:} Given disjoint sets $X,Y$ and element $i\not\in X\cup Y$, from Lemma 5 in \cite{desir} we have,
\[r^{X\cup Y}_i=r^X_i -\sum_{j\in Y} P_i(j\prec X\cup Y) r^X_j. \]
\item  Let $r_0=0$. From Lemma 7 in \cite{desir}, we have 
\begin{equation}\nonumber
R(i\mid Y)\geq 0,\quad \text{if $r_i\geq \max_{j\in Y\cup\{0\}} r_j$, $i\not\in Y$}.
\end{equation}
\item Let $S$ be an unconstrained optimal assortment. Then,
\[r^A_j\geq 0,\quad \forall A\subseteq S,\, j\in S.\]
\end{enumerate}  
Property (iv) is not shown directly in \cite{desir}, so we give a proof before proceeding.  By optimality of $S$, the set $B:=S\backslash A$ is an optimal unconstrained assortment in the $A$-adjusted markov chain. Suppose there exists $j\in B$ such that $r^A_j<0$. Then, we have a contradiction,
\[R^A(B)=\sum_{i\in B} P(i\prec S)r^A_i\, <\, \sum_{i\in B\backslash\{j\}} P(i\prec S\backslash \{j\})r^A_i=R^A(B\backslash\{j\}), \]
where the inequality uses $P(i\prec S)\leq P(i\prec S\backslash\{j\})$, which follows from substitutability of Markov model. 

Now, we show \eqref{prop2} and \eqref{prop1}. Let $S$ be a \emph{maximal} optimal unconstrained assortment on the ground set $N$, i.e., $R(i\mid S)<0$ for every $i\not\in S$. Consider sets $B\subseteq A\subseteq S$ and arbitrary set $C$.

\smallskip

\noindent \emph{Proving \eqref{prop1}:} $R(C\mid A)\leq R(C\mid B)$. Equivalently, we wish to show that,
\[\sum_{i\in C} P(i\prec A\cup C) r^{A}_i\leq \sum_{i\in C} P(i\prec B\cup C) r^{B}_i.  \]
From substitutability, we have $P(i\prec A\cup C)
\leq P(i\prec B\cup C)$ for every $i\in C$. Hence, it suffices to show $r^A_i\leq r^B_i,\, \forall i\in C$. Consider the $B$-adjusted markov chain. From property (iv), we have $r^B_j\geq 0$ for every $j\in S$.  Using property (ii),
\begin{eqnarray*}
r^A_i &= & r^B_i - \sum_{j\in A\backslash B}P_{i}(j\prec A) r^B_j\, \leq\, r^B_i.
\end{eqnarray*}

\smallskip

\noindent \emph{Proving \eqref{prop2}:} $R(A\mid C)\geq 0$. Given some element $e\in S$, we show that $R(e\mid C)\geq 0$ for every set $C\subseteq N\backslash \{e\}$. Applying this for every element in $A\backslash C$ proves the desired. From property (i), we have $R(e\mid C)\geq0$ if and only if $r^C_e\geq0$. Let $C_e=C+e$. Using property (ii), 
\begin{eqnarray}
r^C_e&= &r^{C\cap S}_e-\sum_{j\in C\backslash S}P_{e}(j\prec C) r^{C\cap S}_j\nonumber\\
&= & r^{C\cap S}_e -\sum_{j\in C\backslash S}P_{e}(j\prec C) \left(r^{C\cap S}_j- \sum_{i\in S\backslash C_e }P_{j}(i\prec S\backslash \{e\}) r^{C\cap S}_i\right)\nonumber \\
&& -\sum_{j\in C\backslash S}\sum_{i\in S\backslash C_e}P_{e}(j\prec C) P_{j}(i\prec S\backslash \{e\})r^{C\cap S}_i \nonumber\\
&\geq & r^{C\cap S}_e-\sum_{i\in S\backslash C_e}P_e(i\prec S\backslash\{e\})r^{C\cap S}_i - \sum_{j\in C\backslash S}P_{e}(j\prec C) \left(r^{C\cap S}_j- \sum_{i\in S\backslash C_e}P_{j}(i\prec S\backslash \{e\})r^{C\cap S}_i \right) \nonumber\\
&= &r^{S\backslash\{e\}}_e -\sum_{j\in C\backslash S} P_{e}(j\prec C) r^{S\backslash\{e\}}_j \label{last}
\end{eqnarray}
We note that the inequality in derivation above uses $r^{C\cap S}_i\geq0 $ for every $i\in S$.
Now, we claim that,
\[r^{S\backslash\{e\}}_e\geq r^{S\backslash\{e\}}_j,\quad  \forall j\in C\backslash S.\] 
Substituting this into \eqref{last} then gives us the desired. 
For the sake of contradiction, let there be a $j\in C\backslash S$ such that $r^{S\backslash\{e\}}_j>r^{S\backslash\{e\}}_e$. From property (i), recall that $r^{S\backslash\{e\}}_j=0$ for every $j\in S\backslash\{e\}$. Thus, we have $r^{S\backslash\{e\}}_j> \max_{i\in S}r^{S\backslash \{e\}}_i$. Applying property (iii) on the $S\backslash\{e\}$-adjusted markov chain, 
\[R^{S\backslash\{e\}}(S+j)\geq R^{S\backslash\{e\}}(S).\]
Thus, $R(S\cup\{j\})\geq R(S)$, contradicting the maximality of optimal assortment $S$.
%
This completes the proof of Theorem \ref{compatible}.
\hfill\Halmos
\end{proof}

\begin{proof}{Proof of Corollary \ref{markovcoro}.} Given a ground set $N$, discrete price ladder $\{p_1,\dots,p_r\}$, let $N_P=\{(i,p_j)\mid i\in N, p_j\in P\}$ denote an expanded universe of products. Given a Markov choice model over $N_P$, we can solve the joint pricing and assortment optimization problem by solving an instance of matroid constrained assortment optimization on ground set $N_P$. Consider a partition $\{R_1,\cdots,R_n\}$ of $N_P$, where $R_i=\{(i,p_j)\mid p_j\in P\},\, \forall i\in N$. Then, the partition matroid where a set $S$ is independent if and only if, $\forall (i,p_j)\in S$ we have $(i,p_k)\not \in S,\, \forall p_k\neq p_j$, enforces that each product can be included with at most one price. In fact, combining a cardinality constraint in addition to this partition matroid is still a matroid constraint. This completes the proof. 
\hfill\Halmos \end{proof}

\section{Missing details from Proof of Theorem \ref{compatible}}\label{appx:missing}

\subsection{Proof of Lemma \ref{interleafextend}}\label{appx:interextend}
\begin{repeatlemma}[Lemma \ref{interleafextend}.]
Consider a monotone subadditive function $f$ on ground set $N$ and $\{N_i,M_i\}_{i\in[p+1]}$ piece submodular order $\pi$. For every set $A$ that has an interleaved partitioned $\{O_\ell,E_\ell\}_{\ell\in[m]}$ such that $E(m) \subseteq \cup_{i\in[p+1]} M_i$, we have
\[f(A)\leq f(E(m)) +  \sum_{\ell} f(O_\ell\mid E(\ell-1)). \]
\end{repeatlemma}
\begin{proof}{Proof.}
Recall, $E(j)=\cup_{\ell\leq j} E_\ell$ and $O(j)=\cup_{\ell\geq j}O_\ell$. Also, $O(m+1)=\phi$.	The following inequalities are crucial for the proof,
\begin{equation}\label{induce2}
f\left( O(\ell)\cup E(m)\right)\leq f\left(O_{\ell} \mid E(\ell-1) \right)+ f\left( O(\ell+1) \cup E(m) \right) \quad \forall\, \ell\in[m].
\end{equation}
Summing up these inequalities for $\ell\in [m]$ we have,
\begin{eqnarray*}
\sum_{\ell\in[m]}f\left( O(\ell)\cup E(m)\right)-\sum_{\ell\in[m]}f\left( O(\ell+1) \cup E(m)\right)&&\leq \sum_{\ell\in[m]}f\left(O_{\ell} \mid \cup E(\ell-1) \right),\\
f\left( O(1)\cup E(m)\right)-f\left( E(m)\right) &&\leq \sum_{\ell\in[m]}f\left(O_{\ell} \mid \cup E(\ell-1) \right),\\
f(A)-f\left( E(m)\right) &&\leq \sum_{\ell\in[m]}f\left(O_{\ell} \mid \cup E(\ell-1) \right).
\end{eqnarray*} 

It remains to show \eqref{induce2}. For any $\ell\in[m]$, the sets $B:= E(\ell-1)$ and $A:= E(\ell-1)\cup O_{\ell}$ are proper sets and $B\subseteq A$. Further, the set $C:=O(\ell+1)\cup E(m) \backslash B$ lies entirely to the right of $A$.  Thus, from the piece-wise submodular order property, we have 
$f\left(C \mid A\right) \leq f(C\mid B)$ and

\begin{eqnarray*}
f\left( O (\ell)\cup E(m)\right)&= &f\left(A \right)+ f\left(C \mid A \right)\\
&\leq &f\left(B \right)+ f\left(O_{\ell} \mid B\right) +f\left(C \mid B\right)\\
&= &f\left(O_{\ell} \mid B\right) + f\left(B\cup C\right)= f\left(O_{\ell} \mid  E(\ell-1) \right) + f\left(O (\ell+1)\cup E(m) \right).
\end{eqnarray*}	
\hfill\Halmos \end{proof}

\subsection{Proof of Lemma \ref{forEextend}}\label{appx:forEextend}

\begin{repeatlemma}[Lemma \ref{forEextend}.]
Consider a monotone subadditive function $f$ on ground set $N$ and $\{N_i,M_i\}_{i\in[p+1]}$ piece submodular order $\pi$. We have that $f$ is $\pi$-submodular ordered on the restricted ground set $\cup_{i\in[p+1]} M_i$. 
Thus, given a set $A\subseteq \cup_{i\in[p+1]} M_i$, with interleaved partitioned $\{O_\ell,E_\ell\}_{\ell\in[m]}$ and sets $L_\ell=\{e\in A\mid \pi(e)<l_{\pi}(E_\ell)\},\, \forall \ell\in[m]$, we have 
\begin{enumerate}[(i)]
\item $f(A)\leq f\left(E(m)\right) + \sum_{\ell\in[m]} f\left(O_\ell \mid E(\ell-1)\cup O(1)\backslash O(\ell) \right),$
\item $\sum_{\ell\in[m]} f\left(E_\ell \mid L_{\ell}  \right) \leq f\left(E(m) \right).$ 
\end{enumerate}
\end{repeatlemma}
\begin{proof}{Proof.}
It suffices to show that $f$ is $\pi$-submodular ordered on $\cup_{i\in[p+1]} M_i$. The rest of the lemma the follows from Corollary \ref{forE}. On ground set $M_0=\cup_{i\in[p+1]} M_i$, consider arbitrary sets $B\subseteq A$ and a set $C$ to the right of $A$. Observe that $A$ is a proper set on the original ground set $N$. Therefore, using the piece-wise submodular order property on $N$ gives us the desired.
\hfill\Halmos \end{proof}

\subsection{Proof of Lemma \ref{pieceextend}}\label{appx:pieceextend}
\begin{repeatlemma}[Lemma \ref{pieceextend}.]
Consider an instance of constrained assortment optimization for a compatible choice model $\phi$. Suppose we execute Algorithm \ref{framework} with $\mathcal{A}$ given by an appropriate algorithm out of Algorithms \ref{calg}, \ref{bcalg}, \ref{5balg} and \ref{malg}. If for every parameter setting $\gamma\in \Gamma_\mathcal{A}$, the order $\pi$ and sets $\{N_i,M_i\}_{i\in[p+1]}$ generated by Algorithm \ref{framework} are such that $\pi$ is a $\{N_i,M_i\}_{i\in[p+1]}$ piece submodular order then, Algorithm \ref{framework} retains the guarantee of $\mathcal{A}$ for submodular order maximization.
\end{repeatlemma}
\begin{proof}{Proof.}
We consider each type of constraint (and the corresponding algorithms) separately. The analysis in each case mimics the analysis for submodular order functions. We omit $\phi$ from subscripts and denote $f_\phi$ as simply $f$. Recall that $f$ is monotone subadditive for substitutable choice models and compatible choice models are substitutable by definition. As shown in Part II of the analysis of Algorithm \ref{framework} (see Section \ref{sec:proofofcompat}), for compatible choice models the function $f$ exhibits piece-wise submodular order. For a given $\mathcal{A}_\gamma$, this order is characterized by sets $\{N_i,M_i\}_{i\in[p+1]}$ and ordering $\pi$ as defined in Section \ref{sec:proofofcompat}. 
\smallskip

\subsubsection*{Cardinality constraint.} $\mathcal{A}=$ Algorithm \ref{calg} and parameter $\gamma$ corresponds to threshold value $\tau$ in the algorithm. Let \opt\ denote both the optimal solution and function value. Notice that for $\opt\leq 2\max_{e\in N} f(\{e\})$, picking the largest value of $\tau$ gives a solution $S_{\gamma}$ such that $f(S_{\gamma})\geq (1-\epsilon)\,0.5\, \opt$. So from here on, let $\opt> 2\max_{e\in N} f(\{e\})$. 

Now, fix $\gamma$ such that $\tau\in [(1-\epsilon)\frac{\opt}{2k} ,\frac{\opt}{2k}]$ (such a setting exists for $\opt> 2\max_{e\in N} f(\{e\})$). In the following, we drop $\gamma$ in the notation for convenience. 
Let $\{N_i,M_i\}_{i\in[p+1]}$ and $\pi$ represent the resulting piece wise order generated. Recall that $S=\cup_{i\in[p]}M_i$. We show that $f(S)\geq (1-\epsilon)\, 0.5\, \opt$. 

Let $k'$ denote the cardinality of set $S$. Then, by definition of Threshold Add, \[f(S)\geq k'\tau.\]
When $k'=k$ this gives us $f(S)\geq (1-\epsilon)\, 0.5\, \opt$. So let $k'<k$.  
From monotonicity of the function we have, $\opt\leq f(\opt \cup S)$. So consider the union $\opt\cup S$ and its interleaved partition \[\{o_1,\{s_1\},o_2,\{s_2\},\cdots,\{s_{k'}\},o_{k'+1}\},\] where 
every element of $S$ is a singleton $s_j$ for some $j$ and every element of $\opt\backslash S$ is a singleton $o_{j}$ for some $j$. 
Note that $\{s_1,\cdots,s_\ell\}\subseteq \cup_{i\in[p]} M_i$ for every $\ell\in[k']$. So we apply Lemma \ref{interleafextend} on $\opt \cup S$ with $E_{j}:=s_j$ for $j\in[k']$, to obtain
\begin{eqnarray}
\opt&\leq &f\left(S\right) + \sum_{\ell\in[k']} f\left (o_\ell\mid E(\ell-1) \right). \nonumber
\end{eqnarray}
By definition of Threshold Add, 
we have $	f\left (o_{\ell}\mid E(\ell-1) \right) \leq \tau$, for every $\ell\geq 1$. 
Plugging this into the above inequality and using the upper bound $\tau\leq 0.5\opt/k$ we get,
\begin{eqnarray*}
\opt 
\leq f(S) + k\tau \leq f(S)+0.5\, \opt.
\end{eqnarray*}

\smallskip

\subsubsection*{Budget constraint.}
$\mathcal{A}=$ Algorithm \ref{5balg} (the case of Algorithm \ref{bcalg} is similar to cardinality constraint).
Parameter $\gamma$ determines set $X$ and threshold $\tau$. We focus on $X\subseteq \opt$ that contains the $1/\epsilon$ largest budget elements in \opt\ (or all of \opt\ if its cardinality is small). Given this $X$, the budget $b_e$ required by any $e\in \opt\backslash X$ is strictly smaller than $\epsilon B$, otherwise $b(X)>B$. Let $\tau\in \left[(1-\epsilon)\frac{\opt}{2B},\frac{\opt}{2B}\right]$. 

For the above setting of $\gamma$, let $\{N_i,M_i\}_{i\in[p+1]}$ and $\pi$ represent the piece wise submodular order generated. Let $S\cup R\subseteq \cup_{i\in[p]}M_i$ denote the output of $\mathcal{A}_\gamma$ where $S$ is the feasible set generated and $R$ is the set of elements (if any) removed by Final Add subroutine. Consider the following cases based on how $\mathcal{A}_\gamma$ terminates.
\smallskip

\noindent \textbf{Case I:} Final Add is not invoked, i.e., $R=\emptyset$. 
In this case elements are not removed after they are chosen and 
every element not in $S$ fails the threshold requirement. Similar to the case of cardinality constraint, we have an interleaved partition 
\[\{o_1,\{s_1\},o_2,\{s_2\},\cdots,\{s_{k}\},o_{k+1}\},\] where 
every element of $S$ is a singleton $s_j$ for some $j\in[k]$ and every element of $\opt\backslash S$ is a singleton $o_{j}$ for some $j\in[k+1]$. Using Lemma \ref{interleafextend} with $E_{j}:=s_j,\, \forall j\in[k]$,
\[\opt \leq f(S_i) + \sum_{j=1}^{k_i+1} f\left(o_{j} \mid E(j-1)\right).\]
Since $f\left(o_{j} \mid E(j-1)\right)\leq \tau,\, \forall j\in[k+1]$, we have $\alg\geq f(S)\geq  0.5\, \opt$.
\smallskip


\noindent \textbf{Case II:} Final Add is invoked for some element $j$. We claim that the final output $S$ is such that (i) $B\geq b(S)\geq (1-\epsilon)\, B$ and (ii) $f(S)\geq \tau b(S)$. Using (i) and (ii), we have for $\epsilon\in[0,1]$,
\[f(S)\geq 0.5\,(1-\epsilon)^2\, \opt\geq (0.5-\epsilon)\, \opt.\]
It remains to show (i) and (ii). Let $S_{in}$ denote the set that is input to Final Add and let $\hat{X}=\{e \mid b_e\geq \epsilon B, e\in S_{in}+\{j\}\}$. 
Observe that $\hat{X} \subseteq X$, since every element outside $X$ with budget exceeding $\epsilon B$ is discarded in the beginning. Therefore, $b(\hat{X})\leq B$ and the set $S$ returned by Final Add is feasible (and contains $\hat{X}$).

To see (i), let $t$ denote the last element removed from $S_{in}$ by Final Add. We have, $b_t<\epsilon B$ and $b_t+b(S)>B$. Thus, $b(S)\geq B-\epsilon B$. 

To show (ii), recall that $f$ is $\pi$ submodular ordered on the ground set $M_0:=\cup_{i\in[p]} M_i$. So we use Lemma \ref{forEextend} (ii) on set $S_{in}\subset S\cup R \subseteq M_0$. For simplicity, let $\{1,2,\cdots,s\}$ denote the elements of $S$ in submodular order. Recall that $S\subset S_{in}$. 
Using Lemma \ref{forEextend} (ii) with $A=S_{in}$ and sets $E_k=\{k\}$ for $k\in[s]$, we have
\[\sum_{k\in[s]} f(k\mid L_k)\leq f(S),\]
where $L_k$ denotes the set of all elements in $S_{in}$ chosen by $\mathcal{A}_\gamma$ prior to $k$. From the threshold requirement, we have $f(k\mid L_k)\geq \tau b_k,\, \forall k\in[s]$. Thus, $f(S)\geq \tau b(S)$. 
\smallskip

\subsubsection*{Matroid constraint.}
$\mathcal{A}=$ Algorithm \ref{malg} and there are no parameters $\gamma$. Let $S_j$ denote the set $S$ maintained in the algorithm at the beginning of iteration $j$. Similarly, let $R_j$ denote the set $R$ at the beginning of iteration $j$. Observe that $R_j$ is the set of all elements that were chosen and later swapped out, prior to iteration $j$. Therefore, the set $S_j\cup R_j$ grows monotonically and includes all elements selected by the algorithm prior to $j$. We use $S$ and $R$ to denote the final sets when the algorithm terminates. The sets $\{N_i,M_i\}_{i\in[p+1]}$ and order $\pi$ denote the piece-wise order generated. Observe that $S\cup R=\cup_{i\in[p+1]}M_i$.

Let $\opt$ denote the optimal set (and value). Similarly, we use \alg\ to denote both the value $f(S)$ and the set $S$ output by the algorithm. By monotonicity, $f(\alg \cup \opt)\geq \opt$. 
We will ``essentially" show that $f(\opt \mid \alg)\leq 3\alg$. The main elements that contribute to this upper bound are as discussed in the proof of Theorem \ref{resmatr} so we jump directly to the analysis.
%
%

W.l.o.g., ignore all elements in $N\backslash (\opt\cup S\cup R)$. So let $\hat{N}=\opt\cup S\cup R$ denote our ground set and re-index elements in $\hat{N}$ from $1$ to $|\hat{N}|$ (maintaining submodular order). Similarly, we re-index the sets $\{S_j\}$ so that they continue to denote the set maintained by the algorithm when element $j\in \hat{N}$ is first parsed, for every $j\in\{1,2,\cdots,|\hat{N}|\}$. From monotonicity, $\opt\leq f(\hat{N})$.

Consider an interleaved partition $\{o_\ell,e_\ell\}$ of $\hat{N}$ such that every element in $S\cup R$ is given by singleton $e_\ell$ for some $\ell\in[m]$ and every element in 
$\opt\backslash (S\cup R)$ is given by singleton $o_\ell$ for some $\ell\in[m]$. 
Using Lemma \ref{interleafextend},
\begin{eqnarray*}
\opt
\leq f\left(S\cup R\right) +\sum_{j\in \opt\backslash (S\cup R)} f\left(j \mid S_j\cup R_j\right),
\end{eqnarray*}
where $E(\ell)=\{e_1,\cdots,e_\ell\},\, \forall \ell\in[m]$ and $E(m)=S\cup R$. Next, we upper bound $f(S\cup R)$ in terms of \alg. Consider an interleaved partition $\{\bar{o}_\ell,\bar{e}_\ell\}_{\ell\in[\bar{m}]}$ of $S\cup R$ such that elements of $S$ are given by singletons $\bar{e}_\ell$ and singletons $\bar{o}_\ell$ represent elements of $R$. Since $S\cup R= \cup_{i\in[p+1]} M_i$, mwe apply Lemma \ref{forEextend} (i) to obtain, 
\begin{eqnarray*}
f\left(S\cup R\right)\, \leq\, \alg + \sum_{\ell\in[\bar{m}]} f\left(\bar{o}_\ell \mid \{\bar{o}_1,\bar{e}_1,\cdots,\bar{o}_{\ell-1},\bar{e}_{\ell-1}\} \right)\, = \,\alg+ \sum_{j\in R} f\left(j\mid S_j\cup R_j \right).
\end{eqnarray*}
In the following, we upper bound $\sum_{j\in R} f\left(j\mid S_j\cup R_j \right)$ by \alg\ and $\sum_{j\in \opt\backslash (S\cup R)} f\left(j \mid S_j\cup R_j\right)$ by $2\alg$. This proves the main claim. 
\smallskip

\noindent \textbf{Part I:} $\sum_{j\in R} f\left(j\mid S_j\cup R_j \right)\leq \alg$. 

\noindent Consider an element $j_1\in R$ that was added to $S_j$ without swapping out any element. Since $j_1\in R$, there exists an element $j_2$ that replaced $j$. Inductively, for $t\geq 2$, let $j_t$ denote the $t$th element in the chain of swaps $j_1\to j_2\to\cdots\to j_{t}$. The chain terminates at an element in $S$ and we call this a \emph{swap chain}. From the swap criteria in Algorithm \ref{malg}, we have
\[\sum _{\tau\in[t-1]} f\left(j_{\tau}\mid S_{j_\tau}\cup R_{j_\tau}\right)\leq f\left(j_t\mid R_{j_t}\cup S_{j_t}\right).\] 
Every element in $R$ is part of a unique swap chain 
and each chain has a unique terminal element in $S$. Therefore,
\[\sum_{j\in R} f\left(j\mid S_j\cup R_j \right)\leq \sum_{i\in S} f\left(i\mid S_i\cup R_i \right). \]
Applying Lemma \ref{forEextend} (ii) with $A=S\cup R$ and sets $E_i=\{i\}$ for $i\in S$, we have
\begin{equation}\label{coroE2}
\sum_{i\in S} f\left(i\mid S_i\cup R_i \right) \leq \alg. 
\end{equation} 
\smallskip

\noindent \textbf{Part II:} $ \sum_{j\in \opt\backslash (S\cup R)} f\left(j \mid S_j\cup R_j\right)\leq 2\alg$. 

\noindent From Lemma \ref{maximal} we have that the rank $r\left(\opt\right)\leq r\left(\alg\right)$. Suppose there exists an injection $\phi$ from elements in $\opt\backslash (S\cup R)$ to elements in $S$ such that
\[f\left(j \mid S_j\cup R_j\right) \leq 2 f\left(\phi(j) \mid S_{\phi(j)}\cup R_{\phi(j)} \right), \quad \forall j\in\opt\backslash (S\cup R).\] 
Summing up these inequalities and using \eqref{coroE2}, we are done. It remains to show that $\phi$ exists. We do this via a graphical construction inspired by \cite{chekuristream}.

Consider a graph $G$ with vertices given by $\hat{N}$. In order to define the edges recall that every $j\in\opt\backslash (S\cup R)$ is rejected by the algorithm on parsing. Thus, we have a unique circuit $C_j$ where $C_j \backslash \{j\}\subseteq S_j$. 
For our first set of edges we make a directed edge from $j$ to every element in $C_j\backslash\{j\}$ and we do this for all $j\in\opt\backslash (S\cup R)$. Next, for every $j\in R$, let $C_j$ represent the chain that causes $j$ to be swapped out in the algorithm. We create a directed edge from $j$ to every element in $C_j\backslash\{j\}$. Graph $G$ has the following properties,
\begin{enumerate}[(a)]
\item The elements of  $\opt\backslash (S\cup R)$ are \emph{source} vertices with no incoming edges. Elements of $S$ are \emph{sinks} with no outgoing edges.
\item The neighbors of every node in $G$ form a circuit with the node.
\item Given arbitrary vertex $j\in\opt\backslash (S\cup R)$, for every $i$ reachable from $j$ we claim that 
\[f(j\mid S_j\cup R_j)\leq v_i,\]
where $v_i$ corresponds to the value defined in Algorithm \ref{malg}. For neighbors of $j$ the claim follows directly from the swap criterion. Also, for any two neighboring vertices $i',i'' \in S\cup R$, we have $v_{i'}\leq v_{i''}$. The general claim follows by using these inequalities for every edge on the path from $j$ to $i$. 
\end{enumerate}
Using (a) and (b) we apply Lemma \ref{repeat} to obtain an injection $\phi$ from $\opt\backslash (S\cup R)$ to $S$ such that for every $j\in\opt\backslash (S\cup R)$ there exists a path to $\phi(j)\in S$. Then, from (c) we have, $f(j\mid S_j\cup R_j)\leq v_{\phi(j)}$. Recall the notion of a swap chain defined in Part I and let $W(\phi(j))$ denote the set of all preceding elements of the swap chain that terminates at $\phi(j)$. By definition of $v_{\phi(j)}$ and the swap criterion, \[v_{\phi(j)}=f\left(\phi(j)\mid S_{\phi(j)}\cup R_{\phi(j)}\right)+\sum_{i\in W(\phi(j))}f\left(i\mid S_{i}\cup R_{i}\right) \leq 2f\left(\phi(j)\mid S_{\phi(j)}\cup R_{\phi(j)}\right).\] 
\hfill\Halmos \end{proof}
\section{Streaming Maximization of Monotone Submodular Functions}\label{sec:streamproof}
{\color{black}
\begin{repeattheorem}[Theorem \ref{stream}.]
For constrained maximization of a monotone submodular function $f$,
\begin{enumerate} [(i)]
\item Algorithm \ref{calg} gives a $(1-\epsilon)\, 0.5$ approximation in the streaming setting.
\item Algorithm \ref{bcalg} gives a $\frac{1}{3}-\epsilon$ approximation algorithm in the streaming setting.
\item Algorithm \ref{modmalg} gives a $0.25$ approximation in the streaming setting.
\end{enumerate}  
\end{repeattheorem}
\begin{proof}{Proof.}
Let $\{1,2,\cdots,n\}$ denote the streaming order.	This is a submodular order for $f$, since every permutation of the ground set is a submodular order ($f$ is submodular). It remains to show that Algorithms \ref{calg}, \ref{bcalg}, and \ref{malg} are streaming algorithms, i.e., they parse the ground set just once and require $\tilde{O}(k)$ memory, where $k$ is the size of a maximal feasible solution and $\tilde{O}$ ignores factors that are polynomials of $\log k$. 

To see that Algorithm \ref{calg} is a streaming algorithm, observe that for any given value of $\epsilon>0$, we can implement Algorithm \ref{calg} such that it parses the ground set exactly once for each value of the threshold $\tau$. Each pass over the ground set requires a memory of $k$ (the cardinality parameter). Alternatively, using $O(k\log k/\epsilon)$ memory we can simultaneously maintain a candidate solution for each value of $\tau$ and perform just one pass over the ground set. Similarly, Algorithm \ref{bcalg} is also a streaming algorithm. Algorithm \ref{malg} passes over the ground set exactly once. However, the naive implementation of the algorithm stores the set $R$ which can be of size $O(n)$. For a function with strong submodular order, one can use Algorithm \ref{modmalg} that only uses marginals $f(j\mid S)$, i.e., we can ignore the set $R$ in the algorithm. This reduces the memory required for implementation to $O(d)$, as desired. Algorithm \ref{modmalg} is formally discussed and analyzed below.

\hfill\Halmos	\end{proof}

\begin{algorithm}[H]
\SetAlgoLined
\textbf{Input:} Independence system $\mathcal{I}$, $N$ indexed in submodular order\; 
\smallskip
Initialize $S=\emptyset$ and values $v_j=0$ for all $j\in[n]$\;
\For{$j\in\{1,2,\cdots,n\}$}{
\lIf {$S\cup\{j\}\in \mathcal{I}$} {initialize $v_j=f(j\mid S)$ and update $S\to S \cup \{j\}$}
\Else{ 
Find circuit $C$ in $S\cup\{j\}$ and define $i^*=\underset{i\in C\backslash\{j\}}{\arg\min}\, v_i$ 
and $v_C=v_{i^*}$\; 
\If  {$f(j|S)
>v_C$} { 
$v_j\to v_C + f(j\mid S)$\;
$S\to (S\backslash\{i^*\})\cup \{j\}$\;
}}}
\smallskip
\textbf{Output:} Independent set $S$
\caption{$\frac{1}{4}$ for Matroid Constraint (Efficient Version for Strong Submodular Order) } 
\label{modmalg}
\end{algorithm}
\begin{theorem}
Algorithm \ref{modmalg} is $0.25$ approximate with query complexity $O(nd)$ for the problem of maximizing a monotone subadditive function $f$ with a known strong submodular order, subject to a matroid constraint with matroid rank $d$.
\end{theorem}
\begin{proof}{Proof.} Algorithm \ref{modmalg} parses the ground set once. When parsing an element, the algorithm makes at most $d$ queries, resulting in total $nd$ queries.

To show the performance guarantee. Let $S_j$ denote the set $S$ maintained in the algorithm at the beginning of iteration $j$. Similarly, let  $R_j$ be the set of all elements that were chosen and later swapped out, prior to iteration $j$. Therefore, the set $S_j\cup R_j$ grows monotonically and includes all elements selected by the algorithm prior to $j$. We use $S$ and $R$ to denote the final sets when the algorithm terminates. 
Let $\opt$ denote the optimal set (and value) and \alg\ denote the algorithm output (and value). By monotonicity, $f(\alg \cup \opt)\geq \opt$. 
We will ``essentially" show that $f(\opt \mid \alg)\leq 3\alg$. The main elements that contribute to this upper bound are,
\smallskip

\noindent \textbf{Swap operations:} If an element $j$ is taken out of the set at iteration $t$, then the elements in \opt\ parsed between $j$ and $t$, but never included, may have larger marginal value after the swap. We show that the resulting total increase in marginal value is upper bounded by \alg. 
\smallskip

\noindent \textbf{Rejection of elements:} Any element that is rejected by the algorithm on parsing (due to insufficient marginal value for swapping) may be an element of \opt. Let $j$ denote the index (in submodular order) of such an element. 
Since $j$ is rejected we have that $S_j\cup \{j\}$ contains a circuit $C_j$ and the marginal $f(j\mid S_j)$ is upper bounded by $v_i$ for all $i\in C_j$. 
We show that the total marginal value $\sum_{j\in \opt\backslash (S\cup R)} f(j\mid S_j)$ of elements in $\opt$ rejected by the algorithm is at most 2\alg.
\smallskip

W.l.o.g., we ignore all elements in $N\backslash (\opt\cup S\cup R)$. So let $\hat{N}=\opt\cup S\cup R$ denote our ground set and re-index elements in $\hat{N}$ from $1$ to $|\hat{N}|$ (maintaining submodular order). Similarly, we re-index the sets $\{S_j\}$ so that they continue to denote the set maintained by the algorithm when element $j\in \hat{N}$ is first parsed, for every $j\in\{1,2,\cdots,|\hat{N}|\}$. From monotonicity, $\opt\leq f(\hat{N})$.

Consider an interleaved partition $\{O_\ell,E_\ell\}$ of $\hat{N}$ such that $E(m)=\cup_{\ell\in[m]}E_\ell:= S\cup R$ and $O(1)=\cup_{\ell_\in[m]}O_\ell:= \opt\backslash (S\cup R)$. 
Using Corollary \ref{interleaf},
\begin{eqnarray*}
\opt&\leq &f\left(E(m)\right) +\sum_{\ell\in[m]} f\left(O_\ell \mid E(\ell-1)\right),\\
&\leq &f\left(E(m)\right) +\sum_{j\in \opt\backslash (S\cup R)} f\left(j \mid S_j\cup R_j\right),
\end{eqnarray*}
where the second inequality follows from weak submodular order. Next, we apply Corollary \ref{forE}(i) to upper bound $f(E(m))$ in terms of \alg. Consider an interleaved partition $\{\bar{O}_\ell,\bar{E}_\ell\}_{\ell\in[\bar{m}]}$ of $E(m)$ such that $\bar{E}(\bar{m})=S$ and $\bar{O}(1)=R$. We have,
\begin{eqnarray*}
f\left(E(m)\right) &\leq &\alg + \sum_{\ell\in[\bar{m}]} f\left(\bar{O}_\ell \mid \bar{E}(\ell-1) \cup \bar{O}(1)\backslash \bar{O}(\ell) \right),\\
&= &\alg+ \sum_{j\in R} f\left(j\mid S_j\cup R_j \right).
\end{eqnarray*}
Using the fact that $f$ has a strong submodular order, we have,
\[f\left(j\mid S_j\cup R_j \right)\leq f\left(j\mid S_j \right).\]
Therefore, to prove the lemma it suffices to upper bound $\sum_{j\in R} f\left(j\mid S_j\right)$ by \alg\ and $\sum_{j\in \opt\backslash (S\cup R)} f\left(j \mid S_j\right)$ by $2\alg$. 
\smallskip

\noindent \textbf{Part I:} $\sum_{j\in R} f\left(j\mid S_j \right)\leq  \alg$. 

\noindent Consider an element $j_1\in R$ that was added to $S_j$ without removing any element. Since $j_1\in R$, there exists an element $j_2$ that replaced $j$. Inductively, for $t\geq 2$, let $j_t$ denote the $t$th element in the chain of swaps $j_1\to j_2\to\cdots\to j_{t}$. The chain terminates at an element in $S$ and we call this a \emph{swap chain}. From the swap criteria in Algorithm \ref{modmalg}, we have
\[\sum _{\tau\in[t-1]} f\left(j_{\tau}\mid S_{j_\tau}\right)\leq f\left(j_t\mid  S_{j_t}\right).\] 
Every element in $R$ is part of a unique swap chain 
and each chain has a unique terminal element in $S$. Therefore,
\[\sum_{j\in R} f\left(j\mid S_j \right)\leq \sum_{i\in S} f\left(i\mid S_i \right). \]
Let $S=\{s_1,s_2,\cdots, s_q\}$. Then, we have
\begin{equation}\label{modcoroE}
\alg=	\sum_{i\in S} f\left(i\mid \{s_1,\cdots, s_{i-1}\}\right)\geq \sum_{i\in S} f\left(i\mid S_i \right),
\end{equation} 
here the inequality uses the strong submodular order property of $f$. 
\smallskip

\noindent \textbf{Part II:} $ \sum_{j\in \opt\backslash (S\cup R)} f\left(j \mid S_j\right)\leq 2\, \alg$. 

\noindent From Lemma \ref{maximal} we have $r\left(\opt\right)\leq r\left(\alg\right)$. Suppose that there exists an injection $\phi$ from elements in $\opt\backslash (S\cup R)$ to elements in $S$ such that
\[f\left(j \mid S_j\right) \leq 2 f\left(\phi(j) \mid S_{\phi(j)} \right), \quad \forall j\in\opt\backslash (S\cup R).\] 
Summation of these inequalities and using \eqref{modcoroE}, we are done. It remains to show that $\phi$ exists. We do this via a graphical construction inspired by \cite{chekuristream}.

Consider a graph $G$ with vertices given by $\hat{N}$. In order to define the edges recall that every $j\in\opt\backslash (S\cup R)$ is rejected by the algorithm on parsing. Thus, we have a unique circuit $C_j$ where $C_j \backslash \{j\}\subseteq S_j$. 
For our first set of edges, we make a directed edge from $j$ to every element in $C_j\backslash\{j\}$ and we do this for all $j\in\opt\backslash (S\cup R)$. Next, for every $j\in R$, let $C_j$ represent the chain that causes $j$ to be swapped out in the algorithm. We create a directed edge from $j$ to every element in $C_j\backslash\{j\}$. Graph $G$ has the following properties,
\begin{enumerate}[(a)]
\item The elements of  $\opt\backslash (S\cup R)$ are \emph{source} vertices with no incoming edges. Elements of $S$ are \emph{sinks} with no outgoing edges.
\item The neighbors of every node in $G$ form a circuit with the node.
\item Given arbitrary vertex $j\in\opt\backslash (S\cup R)$, for every $i$ reachable from $j$ we claim that 
\[f(j\mid S_j)\leq v_i,\]
where $v_i$ corresponds to the value defined in Algorithm \ref{malg}. For neighbors of $j$ the claim follows directly from the swap criterion. Also, for any two neighboring vertices $i',i'' \in S\cup R$, we have $v_{i'}\leq v_{i''}$. The general claim follows by using these inequalities for every edge on the path from $j$ to $i$. 
\end{enumerate}
Using (a) and (b) we apply Lemma \ref{repeat} to obtain an injection $\phi$ from $\opt\backslash (S\cup R)$ to $S$ such that for every $j\in\opt\backslash (S\cup R)$ there exists a path to $\phi(j)\in S$. Then, from (c) we have, $f(j\mid S_j)\leq v_{\phi(j)}$. Recall the notion of a swap chain defined in Part I and let $W(\phi(j))$ denote the set of all preceding elements of the swap chain that terminates at $\phi(j)$. By definition of $v_{\phi(j)}$ and the swap criterion, 
\[v_{\phi(j)}=f\left(\phi(j)\mid S_{\phi(j)}\right)+\sum_{i\in W(\phi(j))}f\left(i\mid S_{i}\right) \leq 2f\left(\phi(j)\mid S_{\phi(j)}\right).\] 

\hfill\Halmos \end{proof}
}
\end{APPENDICES}
\end{document}